\newtheorem{theorem}{Theorem} [section]
\newtheorem{lemma}[theorem]{Lemma}
\newtheorem{proposition}[theorem]{Proposition}
\newtheorem{remark}[theorem]{Remark}
\newtheorem{corollary}[theorem]{Corollary}
\DeclareMathOperator*{\intt}{\int}
\DeclareMathOperator*{\supp}{supp}
\newcommand{\I}{\hspace{0.5mm}\text{I}\hspace{0.5mm}}
\newcommand{\II}{\text{I \hspace{-2.8mm} I} }
\newcommand{\III}{\text{I \hspace{-2.9mm} I \hspace{-2.9mm} I}}
\newcommand{\IV}{\text{I \hspace{-2.9mm} V}}
\newcommand{\noi}{\noindent}
\newcommand{\Z}{\mathbb{Z}}
\newcommand{\R}{\mathbb{R}}
\newcommand{\C}{\mathbb{C}}
\newcommand{\T}{\mathbb{T}}
\let\Re=\undefined\DeclareMathOperator*{\Re}{Re}
\let\Im=\undefined\DeclareMathOperator*{\Im}{Im}
\let\P= \undefined
\newcommand{\P}{\mathbf{P}}
\renewcommand{\L}{\mathcal{L}}
\newcommand{\RR}{\mathcal{R}}
\newcommand{\F}{\mathcal{F}}
\newcommand{\al}{\alpha}
\newcommand{\be}{\beta}
\newcommand{\dl}{\delta}
\newcommand{\Dl}{\Delta}
\newcommand{\eps}{\varepsilon}
\newcommand{\g}{\gamma}
\newcommand{\G}{\Gamma}
\newcommand{\ld}{\lambda}
\newcommand{\s}{\sigma}
\newcommand{\ft}{\widehat}
\newcommand{\wt}{\widetilde}
\newcommand{\cj}{\overline}
\newcommand{\dx}{\partial_x}
\newcommand{\dt}{\partial_t}
\newcommand{\dd}{\partial}
\renewcommand{\l}{\ell}
\renewcommand{\o}{\omega}
\renewcommand{\O}{\Omega}
\newcommand{\les}{\lesssim}
\newcommand{\ges}{\gtrsim}
\newcommand{\jb}[1]
{\langle #1 \rangle}
\newcommand{\ind}{\mathbf 1}
\renewcommand{\S}{\mathcal{S}}
\newcommand{\N}{\mathbb{N}}
\newcommand{\NN}{\mathcal{N}}
\newcommand{\QQ}{\mathcal{Q}}
\newcommand{\fN}{\mathfrak{N}}
\newcommand{\fR}{\mathfrak{R}}
\newcommand{\GG}{\mathcal{G}}
\newtheorem*{ackno}{Acknowledgements}
\numberwithin{equation}{section}
\numberwithin{theorem}{section}
\begin{document}
\baselineskip = 14pt

\title[Quasi-invariant Gaussian measures for the cubic fourth order NLS]
{Quasi-invariant Gaussian measures 
for the cubic  fourth order 
nonlinear Schr\"odinger equation}

\author[T.~Oh and  N.~Tzvetkov]
{Tadahiro Oh and Nikolay Tzvetkov}

\address{
Tadahiro Oh, School of Mathematics\\
The University of Edinburgh\\
and The Maxwell Institute for the Mathematical Sciences\\
James Clerk Maxwell Building\\
The King's Buildings\\
Peter Guthrie Tait Road\\
Edinburgh\\ 
EH9 3FD\\
 United Kingdom}

\email{hiro.oh@ed.ac.uk}

\address{
Nikolay Tzvetkov\\
Universit\'e de Cergy-Pontoise\\
 2, av.~Adolphe Chauvin\\
  95302 Cergy-Pontoise Cedex \\
  France}

\email{nikolay.tzvetkov@u-cergy.fr}

\subjclass[2010]{35Q55}

\keywords{fourth order nonlinear Schr\"odinger equation;  
biharmonic nonlinear Schr\"odinger equation;  
Gaussian measure; quasi-invariance}

\begin{abstract}
We consider the  cubic fourth order  nonlinear Schr\"odinger equation on the circle.
In particular, 
we prove that the mean-zero Gaussian measures on Sobolev spaces 
$H^s(\T)$, $s > \frac3 4$, 
are quasi-invariant under the flow.

\end{abstract}


\maketitle
\tableofcontents

\newpage
\section{Introduction}

\subsection{Background}

In this paper, we 
continue 
the  program set up by the second author \cite{TzBBM}
and 
study the transport property 
of Gaussian measures on Sobolev spaces
under the dynamics of a certain Hamiltonian partial differential equation (PDE).

In probability theory, 
there is an extensive literature
on the transport property 
of Gaussian measures 
under linear and nonlinear transformations.
See, for example,  \cite{CM, Kuo, RA, Cru1, Cru2, Bog, AF}.
Classically, Cameron-Martin \cite{CM} studied
 the transport property 
of Gaussian measures under a shift
and established a dichotomy
between absolute continuity and singularity
of the transported measure. 
In the context of nonlinear transformations, 
the  work in \cite{Kuo, RA}
considers nonlinear transformations
that are close to the identity, 
while the work in \cite{Cru1, Cru2}
considers the transport property
under the flow generated by (non-smooth) vector fields.
In particular, in \cite{Cru2}, the existence
of quasi-invariant measures under the dynamics
was established under 
an exponential integrability assumption
of the divergence of the corresponding vector field.
We also note a recent work 
\cite{NRSS} 
establishing absolute continuity 
of the Gaussian measure associated to the complex Brownian bridge
on the circle
under certain gauge transformations.

In the field of Hamiltonian PDEs, 
Gaussian measures
naturally appear in the construction 
of invariant measures 
associated to conservation laws
such as Gibbs measures.
These invariant measures associated to conservation laws
are typically constructed
as weighted Gaussian measures.
There has been a significant progress 
over the recent years in this subject.
See \cite{LRS, BO94, MV, McKean, BO96, BO97, Zhid, Zhid2, 
TZ1, TZ2, 
BTIMRN, BT2,  QV, 
OH3, OH4, OHSBO, 
Tzv, TTz, NORS, 
SuzzoniNLW, SuzzoniBBM, Deng, 
BTT, 
TzV1, TzV2, DengBO, BB1, BB3, R, 
BTT1}.
On the one hand, in the presence of such an invariant weighted Gaussian measure, 
one can study the transport property of a specific Gaussian measure, 
relying on the mutual absolute continuity 
of the invariant measure and the Gaussian measure.
On the other hand, 
the invariant measures constructed
in the forementioned work are mostly supported on rough functions
with the exception of completely integrable Hamiltonian PDEs
such as the cubic nonlinear Schr\"odinger equation (NLS), 
the KdV equation, and the Benjamin-Ono equation
\cite{Zhid, Zhid2, TzV1, TzV2, DTzV}.
These completely integrable equations admit conservation laws
at high regularities, allowing
us to construct weighted Gaussian measures supported on smooth functions.
In general, however, it is rare to have a conservation law at a high regularity
and thus one needs an alternative method to study the transport
property of Gaussian measures supported on smooth functions
under the dynamics of non-integrable PDEs.

In the following, we 
consider the cubic fourth order NLS as a model equation
and study the transport property of Gaussian measures supported
on smooth functions.
In particular, we prove that the transported Gaussian 
measures and the original Gaussian measures
are mutually absolutely continuous with respect to each other.
Our approach 
 combines 
PDE techniques such as an energy estimate and normal form reductions
and probabilistic techniques in an intricate manner.

\subsection{Cubic fourth order nonlinear Schr\"odinger equation}

As a model dispersive equation, 
we consider 
the  
cubic fourth order  nonlinear Schr\"odinger equation   on $\T$:
\begin{align}
\begin{cases}
i \dt u =   \dx^4 u \pm  |u|^{2}u \\
u|_{t = 0} = u_0,
\end{cases}
\qquad (x, t) \in \T\times \R,
\label{NLS1}
\end{align}

\noi
where  $u$ is a complex-valued function 
on $\T\times \R$ with $\T = \R/(2\pi \Z)$.
The equation \eqref{NLS1}
is also called 
the biharmonic NLS
and it was studied in \cite{IK, Turitsyn} in the context of stability of solitons in magnetic materials.
The biharmonic NLS \eqref{NLS1} 
is a special case of the following more general
class of 
fourth order NLS:
\begin{align}
i \dt u =  \ld \dx^2 u + \mu  \dx^4 u \pm  |u|^{2}u. 
\label{4NLS}
\end{align}

\noi
The model \eqref{4NLS}
was introduced in \cite{Karpman, KS}
to include the effect of 
small fourth-order dispersion terms in the propagation of intense laser beams in a bulk medium with Kerr nonlinearity. 
See also \cite{BKS, FIP, Pausader} for the references therein.


The equation \eqref{NLS1} is a Hamiltonian PDE
with the following Hamiltonian:
\begin{align}
H(u) = \frac{1}{2} \int_\T |\dx^2 u |^2 dx \pm \frac{1}{4}\int_\T |u|^4 dx. 
\label{Hamil1}
\end{align}

\noi
Moreover, the mass $M(u)$ defined by 
\begin{align}
 M(u) = \int_\T|u|^2 dx
\label{mass1}
 \end{align}

\noi
is conserved under the dynamics of \eqref{NLS1}.
This mass conservation allows us to prove 
the following global well-posedness  of \eqref{NLS1}
in $L^2(\T)$.

\begin{proposition}\label{PROP:GWP}
The cubic  fourth order  NLS \eqref{NLS1} is globally well-posed
in $H^s(\T)$
for $s \geq 0$.

\end{proposition}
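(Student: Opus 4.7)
The plan is to establish local well-posedness in $L^2(\T)$ by a contraction argument in Bourgain's $X^{s,b}$ spaces adapted to the biharmonic phase, and then to upgrade local to global using the mass conservation \eqref{mass1} together with persistence of regularity.

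First I would recast \eqref{NLS1} in Duhamel form
\begin{equation*}
u(t) = S(t) u_0 \mp i \int_0^t S(t - t') \bigl(|u|^2 u\bigr)(t') \, dt',
\qquad S(t) := e^{-it \dx^4},
\end{equation*}
and work in the spaces $X^{s,b}$ with norm
$\|u\|_{X^{s,b}} = \big\|\jb{\xi}^s \jb{\tau + \xi^4}^b \widehat u(\tau, \xi)\big\|_{L^2_\tau \ell^2_\xi}$.
The heart of the local theory is the trilinear estimate
\begin{equation*}
\big\| u_1 \cj{u_2} u_3 \big\|_{X^{s, b-1}} \lesssim \prod_{j=1}^3 \|u_j\|_{X^{s,b}}, \qquad s \ge 0,
\end{equation*}
for some $b > \tfrac12$ sufficiently close to $\tfrac12$. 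By the standard duality reduction this follows from an $L^4_{t,x}$ Strichartz-type estimate $\|u\|_{L^4_{t,x}(\T\times[0,1])} \lesssim \|u\|_{X^{0, 3/8}}$, which on the torus is proved via a Bourgain-style counting argument: one bounds the number of representations of an integer $n$ as $\xi_1^2 \pm \xi_2^2$ (for second-order dispersion) or as $\xi_1^4 \pm \xi_2^4$ (for the biharmonic case) using divisor bounds. Since the biharmonic phase $\xi^4$ disperses more strongly than $\xi^2$, the required counting is at least as favorable, and $L^4$ Strichartz holds with a small loss. The fixed-point argument then yields a unique solution $u \in C([-T, T]; L^2) \cap X^{0, b}_T$ on a time interval $T = T(\|u_0\|_{L^2}) > 0$, with Lipschitz dependence on the initial datum.

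Global well-posedness in $L^2$ is immediate from the local theory: since the existence time depends only on the conserved $L^2$ norm \eqref{mass1}, the solution may be iterated to arbitrary times. To pass from $s=0$ to $s \ge 0$, I would rerun the contraction in $X^{s, b}$ using the same trilinear estimate, which gives a local $H^s$ solution on an interval whose length still depends only on $\|u_0\|_{L^2}$; a standard application of Gr\"onwall to the resulting difference estimate
$\|u(t)\|_{H^s} \lesssim \|u_0\|_{H^s} + \int_0^t \|u(t')\|_{L^2}^2 \|u(t')\|_{H^s} dt'$
then propagates the $H^s$ bound globally with at most exponential growth.

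The main technical obstacle is the trilinear estimate, specifically verifying that the resonance function
$\Phi(\xi_1,\xi_2,\xi_3) = \xi_1^4 - \xi_2^4 + \xi_3^4 - (\xi_1 - \xi_2 + \xi_3)^4$
yields enough off-diagonal gain to absorb the cubic interaction at regularity $s = 0$. One factors $\Phi = (\xi_1 - \xi_2)(\xi_3 - \xi_2) Q(\xi_1,\xi_2,\xi_3)$ with $Q$ a positive-definite quadratic form, from which the resonance is seen to be bounded below by the product of two factors of the form $|\xi_i - \xi_j|(\text{max frequency})^2$ in the non-resonant regime; this (combined with $L^4$ Strichartz) is the analogue of Bourgain's cubic NLS estimate and suffices to close the contraction. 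Everything else --- uniqueness, continuous dependence, persistence of regularity --- is routine.
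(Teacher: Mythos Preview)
Your approach is correct and matches the paper's: contraction in $X^{s,b}$ via an $L^4$ Strichartz estimate (proved by a modulation decomposition and a counting argument), then mass conservation and persistence of regularity. Two minor remarks: the paper obtains the sharp exponent $b = \tfrac{5}{16}$ in the $L^4$ estimate (your $\tfrac{3}{8}$ still closes the contraction), and the resonance factorization of $\Phi$ you invoke is not actually needed for the local theory --- the $L^4$ bound alone handles the trilinear estimate by duality and H\"older, with the factorization reserved in the paper for the later normal-form and energy arguments.
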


\noi
See Appendix \ref{SEC:WP} for the proof.
We point out that 
Proposition \ref{PROP:GWP} is sharp
in the sense that 
 \eqref{NLS1} is ill-posed below $L^2(\T)$.
See the discussion in Subsection \ref{SUBSEC:illposed}.
See also \cite{GO, OW}.

Our main goal is to study the transport property
of Gaussian measures on Sobolev spaces
under the dynamics of \eqref{NLS1}.

\subsection{Main result}

We first introduce a family of mean-zero Gaussian measures on Sobolev spaces.
Given $ s> \frac{1}{2}$, let $\mu_s$ be the  mean-zero Gaussian measure on $L^2(\T)$
with the covariance operator $2(\text{Id} - \Dl)^{-s}$, written as
\begin{align}
 d \mu_s 
  = Z_s^{-1} e^{-\frac 12 \| u\|_{H^s}^2} du 
  = Z_s^{-1} \prod_{n \in \Z} e^{-\frac 12 \jb{n}^{2s} |\ft u_n|^2}   d\ft u_n .
\label{gauss0}
 \end{align}

\noi
While the expression  
$d \mu_s 
  = Z_s^{-1} \exp(-\frac 12 \| u\|_{H^s}^2 )du $
may suggest that $\mu_s$ is a Gaussian measure on $H^s(\T)$, 
we need to enlarge a space in order to make sense of $\mu_s$.

The Gaussian measure $\mu_s$ defined above is in fact the induced probability measure
under the map\footnote{In the following, we drop the harmless factor of $2\pi$.}
\begin{align}
 \o \in \O \mapsto u^\o(x) = u(x; \o) = \sum_{n \in \Z} \frac{g_n(\o)}{\jb{n}^s}e^{inx}, 
\label{gauss1}
 \end{align}

\noi
where  $\jb{\,\cdot\,} = (1+|\cdot|^2)^\frac{1}{2}$
and 
$\{ g_n \}_{n \in \Z}$ is a sequence of independent standard complex-valued 
Gaussian random variables, i.e.~$\text{Var}(g_n) = 2$.
Note that $u^\o$ in \eqref{gauss1} lies  in $H^{\s}(\T)$ for $\s < s -\frac 12$
but not in $H^{s-\frac 12}(\T)$ almost surely. 
Moreover, for the same range of $\s$, 
 $\mu_s$ is a Gaussian probability measure on $H^{\s}(\T)$ 
and the triplet $(H^s, H^\s, \mu_s)$ forms an abstract Wiener space.
See \cite{GROSS, Kuo2}.

Recall the following definition of quasi-invariant measures.
Given a measure space $(X, \mu)$, 
we say that $\mu$ is {\it quasi-invariant} under a transformation $T:X \to X$
if 
the transported measure $T_*\mu = \mu\circ T^{-1}$
and $\mu$
are equivalent, i.e.~mutually absolutely continuous with respect to each other.
We now state our main result.

\begin{theorem}\label{THM:quasi}
Let $s > \frac 34$.  Then, the Gaussian measure $\mu_s$ is quasi-invariant under the flow of 
the  cubic fourth order  NLS
 \eqref{NLS1}. 
\end{theorem}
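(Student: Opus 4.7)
The plan is to follow the strategy introduced by the second author in \cite{TzBBM}, adapted to the dispersive setting of \eqref{NLS1} via a normal form reduction that exploits the biharmonic phase. First I would introduce the frequency-truncated dynamics: let $P_N$ denote the projector onto Fourier modes $\{|n|\leq N\}$ and consider
\begin{equation}
i\dt u_N = \dx^4 u_N \pm P_N\bigl(|P_N u_N|^2 P_N u_N\bigr).
\end{equation}
Its flow $\Phi_N(t)$ decouples into a finite-dimensional Hamiltonian ODE on $E_N := P_N L^2(\T)$, which preserves Lebesgue measure by the Liouville theorem, and the free biharmonic evolution on $E_N^\perp$, which is unitary and hence preserves $\mu_s|_{E_N^\perp}$. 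Consequently $(\Phi_N(t))_*\mu_s$ admits an explicit density with respect to $\mu_s$ of the form $\exp\bigl(\tfrac12\|P_N u\|_{H^s}^2 - \tfrac12\|P_N\Phi_N(-t)u\|_{H^s}^2\bigr)$, and uniform-in-$N$ quasi-invariance reduces to controlling this increment.

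The heart of the argument is therefore an energy estimate for $\|P_N u_N(t)\|_{H^s}^2$. Differentiating in time and using \eqref{NLS1} leads, after symmetrization and extraction of the trivially-bounded resonant part (which depends only on the mass \eqref{mass1}), to a non-resonant quartic form
\begin{equation}
\frac{d}{dt}\|P_N u_N\|_{H^s}^2 = \pm 2\,\Im\!\sum_{\Gamma_N^*}\bigl(\jb{n_1}^{2s} - \jb{n_2}^{2s}\bigr)\ft u_{n_1}\cj{\ft u}_{n_2}\ft u_{n_3}\cj{\ft u}_{n_4},
\end{equation}
where $\Gamma_N^* = \{|n_j|\leq N,\ n_1-n_2+n_3-n_4=0,\ \{n_1,n_3\}\neq\{n_2,n_4\}\}$. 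The crucial feature of the fourth-order dispersion is the lower bound $|\Phi(\bar n)|\gtrsim |n_1-n_2|\cdot\max_j |n_j|^3$ on $\Gamma_N^*$ for the phase $\Phi(\bar n) = n_1^4-n_2^4+n_3^4-n_4^4$. I would integrate by parts in time against $e^{it\Phi(\bar n)}$ (i.e.\ perform a Poincar\'e--Dulac normal form), rewriting the right-hand side as $-\tfrac{d}{dt}R_N(u_N)$ plus a sextic remainder; absorbing the total derivative into a modified energy $E_{s,N}(u) := \|P_N u\|_{H^s}^2 + R_N(u)$ yields a bound $\bigl|\tfrac{d}{dt}E_{s,N}(u_N(t))\bigr|\les \|u_N(t)\|_{H^\sigma}^k$ for some $\sigma < s-\tfrac12$. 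The threshold $s>\tfrac34$ emerges from balancing the symbol loss $\jb{n_{\max}}^{2s-1}$ against the phase gain $\jb{n_{\max}}^{-3}$ after weighting by the four factors $\jb{n_j}^{-s}$ typical of $\mu_s$.

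The third step is the probabilistic transport estimate and the passage to the limit. I would introduce weighted measures $d\rho_{s,N} := Z_N^{-1}\exp\bigl(-R_N(u)\bigr)\mathbf{1}_{\{\|u\|_{H^\sigma}\leq K\}}\,d\mu_s$ and show, via Wiener-chaos estimates, that $R_N$ has exponential moments under $\mu_s$ uniformly in $N$, so that $\rho_{s,N}$ and $\mu_s$ are equivalent with bounds independent of $N$. Combining the Liouville property on $E_N$ with the modified energy estimate of step two in a Gronwall loop then yields
\begin{equation}
\rho_{s,N}\bigl(\Phi_N(t)(A)\bigr) \leq C(t,K)\,\rho_{s,N}(A)^{\theta}
\end{equation}
for every measurable $A\subset\{\|u\|_{H^\sigma}\leq K\}$, uniformly in $N$. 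Using global well-posedness from Proposition \ref{PROP:GWP} together with a standard approximation of $\Phi(t)$ by $\Phi_N(t)$ on sets of large $\mu_s$-measure transfers this estimate to the full flow; quasi-invariance then follows by taking $A$ to be a $\mu_s$-null set, letting $K\to\infty$, and invoking time reversibility of \eqref{NLS1}.

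The hard part will be the energy estimate. A naive bound on $\tfrac{d}{dt}\|u\|_{H^s}^2$ offers no derivative gain and would force $s\geq 1$. The normal form reduction lowers the threshold to $\tfrac34$, but carrying it out requires both the correction $R_N$ and the resulting sextic remainder to remain estimable on $\mathrm{supp}(\mu_s)\subset H^{s-1/2-}$; in particular, $R_N$ must admit a $\mu_s$-a.s.\ limit as $N\to\infty$ with uniform exponential moments, and the sextic remainder must be controlled using only this low regularity. The factor $\max_j|n_j|^3$ in the denominator, a direct consequence of the biharmonic dispersion, is exactly what permits this balance; a lower-order dispersion would not close the argument.
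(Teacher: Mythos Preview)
Your overall architecture matches the paper's: truncated dynamics, normal form producing a modified energy $E_{s,N}=\|P_Nu\|_{H^s}^2+R_N$, weighted Gaussian measures, a Yudovich--Gronwall transport estimate, and passage to the limit. The serious gap is the phase bound. You claim $|\Phi(\bar n)|\gtrsim|n_1-n_2|\,n_{\max}^3$ on the non-resonant set, but the factorization actually gives
\[
\Phi(\bar n)=(n_1-n_2)(n_1-n_4)\bigl(n_1^2+n_2^2+n_3^2+n_4^2+2(n_1+n_3)^2\bigr),
\]
so $|\Phi(\bar n)|\sim|n_1-n_2|\,|n_1-n_4|\,n_{\max}^2$. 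Both difference factors can be $O(1)$ simultaneously: take $n_1=N$, $n_2=N-1$, $n_3=N-2$, $n_4=N-1$, which is non-resonant yet gives $|\Phi|\sim N^2$, not $N^3$. Your power-counting heuristic (loss $n_{\max}^{2s-1}$, gain $n_{\max}^{-3}$) is therefore based on a false estimate, and the energy bound does not close this way. The paper compensates by writing $\mu=(n_4-n_1)(n_4-n_3)$ and invoking the divisor bound $\#\{(n_1,n_3):\,(n_4-n_1)(n_4-n_3)=\mu\}\lesssim|\mu|^{\delta}$; it is this arithmetic input, combined with the correct $|\Phi|\gtrsim|\mu|\,n_{\max}^2$, that produces the threshold $s>\tfrac34$ (the condition $\tfrac14<s-\tfrac12$ enters when the remaining $H^{\frac16}$-type norms on the sextic remainder are interpolated against $L^2$).

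A second, smaller issue: your cutoff $\mathbf{1}_{\{\|u\|_{H^\sigma}\leq K\}}$ is not conserved by the flow, so the change-of-variable formula that feeds into the Gronwall step does not hold cleanly. The paper uses the $L^2$-cutoff $\mathbf{1}_{\{\|u\|_{L^2}\leq r\}}$, which \emph{is} conserved, and this is what makes the identity $\rho_{s,N,r,t}(\Psi_N(t)A)=\int_A\ind_{\{\|v\|_{L^2}\leq r\}}e^{-\frac12 E_t(P_N\Psi_N(t)v)}\,dL_N\otimes d\mu_{s,N}^\perp$ exact. You should replace the $H^\sigma$-cutoff by an $L^2$-cutoff; the energy estimate then needs the form $|\tfrac{d}{dt}E_{s,N}|\lesssim\|v\|_{L^2}^{4+\theta}\|v\|_{H^{s-\frac12-\varepsilon}}^{2-\theta}$, with the $L^2$ factors absorbed by the cutoff and the $H^{s-\frac12-\varepsilon}$ factors handled by hypercontractivity under $\mu_s$.
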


When $s = 2$, one may obtain Theorem \ref{THM:quasi}
by establishing invariance of the Gibbs measure
``$d\rho = Z^{-1} \exp(-H(u))du$''
and appealing to the mutual absolute continuity of the Gibbs measure $\rho$
and the Gaussian measure $\mu_2$, 
at least in the defocusing case.
Such invariance, however, is 
a very rigid statement and is not applicable to other values of $s >\frac 34$.

Instead, 
we follow the approach 
introduced by the second author  
in the context of the (generalized) BBM equation \cite{TzBBM}.
In particular, we combine 
both PDE techniques 
and probabilistic techniques in an intricate manner.
Moreover, we perform both local and  global analysis on the phase space.
An example of local analysis 
is an energy estimate (see Proposition \ref{PROP:energy} below), 
where we study a property of a particular trajectory, 
while examples of global analysis 
include 
the transport property of Gaussian measures under global transformations discussed in Section \ref{SEC:Gauss}
and a change-of-variable formula (Proposition \ref{PROP:meas1}).

As in \cite{TzBBM}, 
it is essential to exhibit a smoothing on the nonlinear part of the dynamics of \eqref{NLS1}.
Furthermore, we crucially exploit the invariance property of the Gaussian measure
$\mu_s$ under some nonlinear (gauge) transformation.
See Section \ref{SEC:Gauss}.
In the context of the generalized BBM considered in \cite{TzBBM}, 
there was an obvious smoothing coming from the smoothing operator
applied to the nonlinearity.
There is, however, no apparent smoothing for our equation \eqref{NLS1}.
In fact, a major novelty compared to \cite{TzBBM} is that
in this work we  exploit the dispersive nature of the equation in a fundamental manner.
Our main tool in this context is  normal form reductions
analogous to the approach employed in 
\cite{BIT, KO, GKO}.
In \cite{BIT}, Babin-Ilyin-Titi introduced
a normal form approach for constructing solutions to dispersive PDEs.
It turned out that this approach has various applications
such as establishing unconditional uniqueness \cite{KO, GKO}
and exhibiting nonlinear smoothing \cite{ET}.
The normal form approach is also effective
in establishing a good energy estimate,
though such an application of the normal form reduction 
in energy estimates is more classical and precedes the work of \cite{BIT}.
See Subsection \ref{SUBSEC:energy}.

In \cite{RA}, Ramer proved a criterion on quasi-invariance
of a Gaussian measure on an abstract Wiener space
under a nonlinear transformation.  
In the context of our problem, 
this result basically states that $\mu_s$
is quasi-invariant
if the nonlinear part is $(1+\eps)$-smoother than the linear part.
See \cite{Kuo} for a related previous result. 
In Section \ref{SEC:Ramer}, 
we perform a normal form reduction on the renormalized equation \eqref{NLS5}
and exhibit $(1+\eps)$-smoothing on the nonlinear part if $s > 1$.
This argument provides the first proof of 
Theorem \ref{THM:quasi} when $ s > 1$.
It seems that the regularity restriction $s > 1$
is optimal for the application of Ramer's result.
See Remark \ref{REM:nec}.

When $s \leq 1$, we need to go beyond Ramer's argument.
In this case, we follow the basic methodology in \cite{TzBBM}, 
combining an energy estimate and global analysis of truncated measures.
Due to a lack of apparent smoothing, 
our energy estimate  is more intricate.
Indeed, we need to perform a normal form reduction and introduce a modified energy for this purpose.
This introduces a further modification to the argument from \cite{TzBBM}.
See Section \ref{SEC:quasi}.
Lastly, let us point out the following.
While
the regularity restriction $s > \frac 34$ in Theorem \ref{THM:quasi} comes from 
the energy estimate (Proposition \ref{PROP:energy}), 
we expect that, by introducing some new ideas related to more refined normal form reductions developed in \cite{GKO}, the result may be extended to the (optimal) regularity range
$s>\frac 12$.
We plan to address this question in a future work.

\begin{remark}\rm
(i)  In the higher regularity setting $s > 1$, 
we can reduce the proof of Theorem \ref{THM:quasi} to Ramer's result \cite{RA}. 
See Section~\ref{SEC:Ramer}.
While there is an explicit representation for the Radon-Nikodym derivative
in \cite{RA}, 
we do not know how to gain useful information from it at this point

\smallskip

\noi
(ii)  In the low regularity case $\frac 34 < s \leq 1$, we employ the argument introduced in \cite{TzBBM}.
See Section~\ref{SEC:quasi}.
This argument is more quantitative
and in particular, 
it allows us to obtain a polynomial upper bound on the growth of the Sobolev norm.
However, such a polynomial growth bound may also be obtained
by purely deterministic methods.
See Remark 7.4 in \cite{TzBBM}.
A quasi-invariance result with better quantitative bounds may lead
to an improvement
of the known deterministic bounds.
At the present moment, however, 
we do not know how to make such an idea work.

\smallskip

\noi
(iii) We point out that the existence of a quasi-invariant
measure is a qualitative statement, showing a delicate persistency
property of the dynamics.
In particular, this persistence property due to the quasi-invariance
is stronger than the (usual) persistence of regularity.
In a future work, 
we plan to construct Hamiltonian dynamics
possessing  the persistence of regularity
such that the Gaussian measure $\mu_s$
and the transported measure under the dynamics are mutually singular.

\end{remark}

\begin{remark}\rm
Let us briefly discuss the situation for the related cubic (second order) NLS:
\begin{align}
i \dt u =   \dx^2 u \pm  |u|^{2}u,  
\qquad (x, t) \in \T\times \R.
\label{cubicNLS}
\end{align}

\noi
It is known to be completely integrable
and possesses an infinite sequence of conservation laws $H_k$, $k \in \N \cup \{0\}$, 
controlling the $H^k$-norm
\cite{AKNS, AM, GK}.
Associated to 
the conservation laws $H_k$, $k \geq 1$, 
there exists an infinite sequence
of invariant weighted Gaussian measures $\rho_k$ 
supported on $H^{k - \frac 12 -\eps}(\T)$, $\eps > 0$
\cite{Zhid, BO94}.
As mentioned above, 
one may combine 
this invariance 
and the mutual absolute continuity 
of $\rho_k$ and the Gaussian measure $\mu_k$
to deduce quasi-invariance of $\mu_k$ under the dynamics of \eqref{cubicNLS}, $k \geq 1$.
It may be of interest to investigate quasi-invariance 
of $\mu_s$ for non-integer values of $s$.

\end{remark}

\subsection{Organization of the paper}

In Section~\ref{SEC:notations}, we introduce some notations.
In Section~\ref{SEC:2}, we apply several transformations to \eqref{NLS1}
and  derive  a new renormalized equation.
We also prove a key factorization lemma (Lemma~\ref{LEM:phase})
which play a crucial role in 
the subsequent nonlinear analysis.
We then investigate invariance properties of Gaussian measures
under several transformations in Section~\ref{SEC:Gauss}.
In Section~\ref{SEC:Ramer}, we prove  Theorem~\ref{THM:quasi}
for $s > 1$ as a consequence of Ramer's result \cite{RA}.
By establishing a crucial energy estimate and 
performing global analysis of truncated measures, we
finally present the proof of   Theorem~\ref{THM:quasi}
for the full range $s > \frac 34$ in Section~\ref{SEC:quasi}.
In Appendix~\ref{SEC:WP}, we discuss the well-posedness
issue of the Cauchy problem~\eqref{NLS1}.
Then, we use it to study the approximation property 
of truncated dynamics in Appendix~\ref{SEC:approx}, 
which is used in the proof of Theorem \ref{THM:quasi}
in Section~\ref{SEC:quasi}.

\section{Notations}
\label{SEC:notations}

Given $N \in \N$, 
we use $\P_{\leq N}$ to denote the Dirichlet projection 
onto the frequencies $\{|n|\leq N\}$
and  set $\P_{> N} := \text{Id} - \P_{\leq N}$.
Define $E_N$ and $E_N^\perp$ by 
\begin{align*}
E_N & = \P_{\leq N} L^2(\T) = \text{span}\{e^{inx}: |n|\leq N\},\\
E_N^\perp & = \P_{>N} L^2(\T) = 
\text{span}\{e^{inx}: |n|> N\}.
\end{align*}

Given $ s> \frac{1}{2}$, let $\mu_s$ be the  Gaussian measure on $L^2(\T)$
defined in \eqref{gauss0}.
Then, we can write $\mu_s$ as
\begin{align}
 \mu_s = \mu_{s, N}\otimes \mu_{s, N}^\perp,
\label{G1}
 \end{align}

\noi
where $ \mu_{s, N}$ and $\mu_{s, N}^\perp$ 
are the 
marginal distributions of $\mu_s$ restricted onto $E_N$ and $E_N^\perp$, respectively.
In other words, 
$ \mu_{s, N}$ and $\mu_{s, N}^\perp$ 
are 
induced probability measures
under the following maps: 
\begin{align}
& u_N :\o \in \O \mapsto  u_N (x; \o) = \sum_{|n|\leq N} \frac{g_n(\o)}{\jb{n}^s}e^{inx}, \label{G2}\\
& u_N^\perp: \o \in \O \mapsto u_N^\perp(x; \o) = \sum_{|n|>N} \frac{g_n(\o)}{\jb{n}^s}e^{inx}, \label{G3}
 \end{align}

\noi
 respectively.
Formally, we can write  $ \mu_{s, N}$ and $\mu_{s, N}^\perp$
as
\begin{align}
 d \mu_{s, N} = Z_{s, N}^{-1} e^{-\frac 12 \| \P_{\leq N}  u_N\|_{H^s}^2} d u_N 
\quad \text{and} \quad  
d \mu_{s, N}^\perp = \ft Z_{s,N }^{-1} e^{-\frac 12 \| \P_{>N} u_N^\perp \|_{H^s}^2} d u_N^\perp. 
\label{G4}
 \end{align}

\noi
Given $r > 0$, we also define a probability measure $\mu_{s, r}$ 
by 
\begin{align}
d \mu_{s, r} = Z_{s, r}^{-1}\ind_{\{ \| v\|_{L^2 } \leq r\}} d\mu_s.
\label{Gibbs1b}
\end{align}

The defocusing/focusing nature of the equation \eqref{NLS1} does not play any role,
and thus we assume that it is defocusing, i.e. with the $+$ sign in \eqref{NLS1}.
Moreover, in view of the time reversibility of the equation, 
we only consider positive times in the following.

\section{Reformulation of the  cubic fourth order  NLS}
\label{SEC:2}

In this section, we apply several transformations to \eqref{NLS1}
and reduce it 
to a convenient form on which we perform our analysis.
Given $t \in \R$, we define a gauge transformation $\GG_t$ on $L^2(\T)$
by setting
\begin{align}
 \mathcal{G}_t [f ]: = e^{ 2 i t \fint |f|^2} f, 
\label{gauge1}
\end{align}

\noi
where $\fint_\T f(x) dx := \frac{1}{2\pi} \int_\T f(x)dx$.
Given a function $u \in C(\R; L^2(\T))$, 
we define $\GG$ by setting
\[\GG[u](t) : = \GG_t[u(t)].\]

\noi
Note that $\GG$ is invertible
and its inverse is given by $\GG^{-1}[u](t) = \GG_{-t}[u(t)]$.

Let  $u \in C(\R; L^2(\T))$ be a solution to \eqref{NLS1}.
Define $\wt u$ by 
\begin{align}
\wt u (t) := \mathcal{G}[u](t)  = e^{ 2 i t \fint |u(t)|^2} u(t).
\label{gauge2}
\end{align}

\noi
Then, it follows from the 
 the mass conservation 
that $\wt u$ is a solution to the following renormalized fourth order NLS:
\begin{align}
i \dt \wt u  =   \dx^4 \wt u  +\bigg( |\wt u |^{2}  - 2 \fint_\T |\wt u |^2dx \bigg) \wt u. 
\label{NLS4}
\end{align}

\noi
Next, define the interaction representation $v$ of $\wt u$ by 
\begin{align}
v(t) = S(-t)\wt u(t), 
\label{gauge3}
\end{align}

\noi
where $S(t) = e^{-i t\dx^4}$.
For simplicity of notations, we use $v_n$ to denote the Fourier coefficient of $v$ in the following,
when there is no confusion.
By writing \eqref{gauge3} on the Fourier side, we have 
\begin{align}
v_n(t) = e^{ it n^4} \wt u_n(t). 
\label{gauge3a}
\end{align}

\noi
Then, with \eqref{gauge3a}, 
we can reduce
\eqref{NLS4} to the following equation for $\{v_n\}_{n \in \Z}$: 
\begin{align}
\dt v_n 
& = -i e^{i t n^4} ( i \dt \wt u_n - n^4 \wt u_n)\notag\\
& = -i \sum_{\G(n)} e^{-i \phi(\bar n) t} v_{n_1}\cj{v_{n_2}}v_{n_3}
+ i |v_n|^2 v_n \notag\\
& =: \NN(v)_n + \RR(v)_n, 
\label{NLS5}
\end{align}

\noi
where the phase function $\phi(\bar n)$ and the plane $\G(n)$ are given by
\begin{align}
 	\phi(\bar n) = \phi(n_1, n_2, n_3, n) = n_1^4 - n_2^4 + n_3^4 - n^4
\label{phi1}
\end{align}

\noi
and 
\begin{align}
\G(n) 
= \{(n_1, n_2, n_3) \in \Z^3:\, 
 n = n_1 - n_2 + n_3 \text{ and }  n_1, n_3 \ne n\}.
\label{Gam1}
 \end{align}


The phase function $\phi(\bar n)$ admits
the following factorization.
	
\begin{lemma}\label{LEM:phase}
Let $n = n_1 - n_2 + n_3$.
Then, we have
\begin{align}
\phi(\bar n) =  (n_1 - n_2)(n_1-n) 
\big( n_1^2 +n_2^2 +n_3^2 +n^2 + 2(n_1 +n_3)^2\big).
 \label{phase1}
\end{align}
	
\end{lemma}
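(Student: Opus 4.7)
The plan is purely algebraic. The key observation is that the convolution constraint $n = n_1 - n_2 + n_3$ is equivalent to the equality of sums $n_1 + n_3 = n_2 + n$, which suggests pairing $\{n_1, n_3\}$ against $\{n_2, n\}$ and applying the elementary identity $a^4 + b^4 = \bigl((a+b)^2 - 2ab\bigr)^2 - 2(ab)^2$ to each pair simultaneously.

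Setting $s := n_1 + n_3 = n_2 + n$, $p := n_1 n_3$, and $q := n_2 n$, this gives
\begin{align*}
n_1^4 + n_3^4 = s^4 - 4 s^2 p + 2 p^2
\qquad \text{and} \qquad
n_2^4 + n^4 = s^4 - 4 s^2 q + 2 q^2,
\end{align*}
so that the quartic terms cancel upon subtraction and one is left with
\begin{align*}
\phi(\bar n) = (n_1^4 + n_3^4) - (n_2^4 + n^4) = -4 s^2 (p - q) + 2(p^2 - q^2) = 2(p - q)\bigl(p + q - 2s^2\bigr).
\end{align*}

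The two linear factors in \eqref{phase1} then come from $p - q$. A short computation using $n_2 = n_1 + n_3 - n$ gives the clean identity $p - q = n_1 n_3 - n_2 n = (n_1 - n)(n_3 - n)$, and the same constraint yields $n_3 - n = -(n_1 - n_2)$, hence $p - q = -(n_1 - n)(n_1 - n_2)$. To identify the quadratic factor, I would decompose $4 s^2 - 2p - 2q = 2 s^2 + (s^2 - 2p) + (s^2 - 2q)$, writing one copy of $s^2$ as $(n_1+n_3)^2$ and the other as $(n_2 + n)^2$; since $s^2 - 2p = n_1^2 + n_3^2$ and $s^2 - 2q = n_2^2 + n^2$, this produces $-2(p + q - 2 s^2) = 2(n_1+n_3)^2 + n_1^2 + n_2^2 + n_3^2 + n^2$, and combining the three displays yields precisely \eqref{phase1}.

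There is no genuine obstacle here — the computation is routine once the correct pairing is chosen. The only insight needed is the initial grouping, which is forced by the identity $n_1 + n_3 = n_2 + n$; a brute-force expansion of both sides would also succeed but would obscure why the factorization exists in the first place.
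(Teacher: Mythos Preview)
Your proof is correct and, in fact, cleaner than the paper's. The paper proceeds by sequential factorization: it first writes $\phi(\bar n) = (n_1 - n_2)(\I + \II)$ by grouping $n_1^4 - n_2^4$ and $n_3^4 - n^4$ separately, then extracts the factor $(n_1 - n)$ from each of $\I$ and $\II$ by direct polynomial manipulation (using $n_2 = n_1 + n_3 - n$), and finally collects the quadratic remainders. Your approach instead exploits the constraint $n_1 + n_3 = n_2 + n$ from the outset by passing to the symmetric functions $s, p, q$ of the two pairs, which makes the cancellation of the quartic terms and the emergence of the factor $p - q = -(n_1-n)(n_1-n_2)$ transparent. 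The paper's route is a verification; yours explains \emph{why} the factorization holds, and would generalize more readily to related phase functions.
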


\begin{proof}

With $n = n_1 - n_2 + n_3 $, we have	
\begin{align}
\phi(\bar n) 
& =  (n_1 - n_2)
\Big\{(n_1^3 + n_2^3
-n_3^3  - n^3 )
+ 
(  n_1^2 n_2 + n_1 n_2^2 
 - n_3^2 n  -  n_3 n^2  )\Big\} \notag \\
 & = :(n_1 - n_2)( \I + \II).
 \label{phase2}
 \end{align}

\noi
On the one hand, we have
\begin{align}
\I =  (n_1 - n) 
(  n_1^2 + n_1 n + n^2 
 + n_2^2  + n_2 n_3 +  n_3^2  ).
 \label{phase3}
\end{align}
	
\noi
On the other hand, 
with $n_2 = n_1 + n_3 - n$, we have
\begin{align}
\II & = 2n_1^3 + 3 (n_3 - n) n_1^2 
+ (n_3^2 - 2n_3 n +n^2) n_1 - n_3^2n - n_3 n^2\notag \\
& = (n_1 - n) \big(
 2n_1^2 +  (3n_3 - n) n_1 + n_3^2 + n_3 n\big).
 \label{phase4}
\end{align}
	
\noi
From \eqref{phase2} with \eqref{phase2} and \eqref{phase4}
with $n_2 = n_1 + n_3 - n$, 
we obtain
\begin{align*}
\phi(\bar n) 
& =  (n_1 - n_2)(n_1 - n)(3n_1^2 + n_2^2 + 2n_3^2 + n^2 
+ 3 n_1 n_3 + n_2 n_3 + n_3 n)\\
& =  (n_1 - n_2)(n_1 - n)\big( n_1^2 +n_2^2 +n_3^2 +n^2 + 2(n_1 +n_3)^2\big).
\qedhere
 \end{align*}

\end{proof}

In the remaining part of the paper, we present the proof of Theorem 
\ref{THM:quasi} by performing analysis on  \eqref{NLS5}.
In view of Lemma \ref{LEM:phase}, 
we refer to the first term $\NN(v)$ and the second term $\RR(v)$ 
on the right-hand side of \eqref{NLS5}
as the non-resonant and resonant terms, respectively.
While we do not have any smoothing on $\RR(v)$ under a time integration, 
  Lemma \ref{LEM:phase} shows that 
there is a smoothing on the non-resonant  term $\NN(v)$.
We will exploit this fact in Section \ref{SEC:Ramer}.
In  Section \ref{SEC:quasi}, 
we will exploit a similar non-resonant behavior
in establishing a crucial energy estimate
(Proposition \ref{PROP:energy}).

\section{Gaussian measures under transformations}
\label{SEC:Gauss}

In this section, we discuss invariance properties
of  Gaussian measures under various  transformations.

\begin{lemma}\label{LEM:gauss2}
Let $t \in \R$.  Then, the Gaussian measure $\mu_s$ defined in \eqref{gauss0} is invariant under the linear map $S(t)$.
\end{lemma}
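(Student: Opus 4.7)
The plan is to diagonalize $S(t)$ on the Fourier side and then use rotational invariance of the complex Gaussian law in each mode. Writing $S(t)f = \sum_{n\in\Z} e^{-itn^4}\widehat{f}_n e^{inx}$, the map $S(t)$ preserves each one-dimensional complex Fourier subspace and acts on it as multiplication by the unimodular constant $e^{-itn^4}$.

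From the series representation \eqref{gauss1}, the law of $u^\omega$ under $\mu_s$ is determined by the joint law of the Fourier coefficients $\widehat{u}_n = g_n(\omega)/\jb{n}^s$, which are independent complex Gaussian random variables with variance $2\jb{n}^{-2s}$. Applying $S(t)$ yields $\widehat{S(t)u^\omega}_n = e^{-itn^4} g_n(\omega)/\jb{n}^s$, so I would first argue, mode by mode, that the rotated variable $e^{-itn^4}g_n$ has the same distribution as $g_n$. This is the standard rotational invariance of the mean-zero complex Gaussian, which is immediate from the density $(2\pi)^{-1} e^{-|z|^2/2}$ on $\C$, or equivalently from the fact that the real and imaginary parts of $g_n$ are i.i.d.~$\NN(0,1)$ and a rotation is orthogonal.

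Since the $g_n$ are independent and the rotations are applied mode-by-mode with deterministic phases, the joint law of $\{e^{-itn^4}g_n\}_{n\in\Z}$ equals that of $\{g_n\}_{n\in\Z}$. Hence $S(t)u^\omega$ has the same distribution as $u^\omega$ in $H^\sigma(\T)$ for any $\sigma < s-\tfrac12$, which is exactly the statement that $(S(t))_*\mu_s = \mu_s$.

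The only mild subtlety is to justify that product-measure arguments apply, since $\mu_s$ lives on an infinite-dimensional space. I would handle this by first verifying invariance of the finite-dimensional marginals: by the factorization \eqref{G1}--\eqref{G4}, $\mu_{s,N}$ is the law on $E_N$ of $\sum_{|n|\leq N} g_n(\omega)\jb{n}^{-s}e^{inx}$, and $S(t)$ preserves $E_N$ and acts there by a (unitary, diagonal) rotation, so the finite-dimensional computation above yields $(S(t))_*\mu_{s,N} = \mu_{s,N}$. The analogous statement holds for $\mu_{s,N}^\perp$ on $E_N^\perp$. Since cylinder sets of this form generate the Borel $\sigma$-algebra of $H^\sigma(\T)$, invariance on the full space follows. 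No serious obstacle is expected; the argument is essentially a direct computation.
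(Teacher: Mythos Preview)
Your proposal is correct and follows essentially the same approach as the paper: write $\mu_s$ as a product of one-dimensional complex Gaussians over Fourier modes, observe that $S(t)$ acts on each mode as multiplication by the unimodular constant $e^{-itn^4}$, and invoke rotational invariance of each factor. Your additional remarks on finite-dimensional marginals and cylinder sets are a harmless elaboration of the same idea.
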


\begin{proof}
Note that $\mu_s$ can be written as an infinite product of Gaussian measures:
\begin{align*}
\mu_s = \bigotimes_{n \in \Z} \rho_n,
\end{align*}

\noi
where $\rho_n$ is the probability distribution for $\ft u_n$.
In particular, $\rho_n$ is a mean-zero Gaussian probability measure on $\C$ with variance $2 \jb{n}^{-2s}$.
Then, noting that the action of $S(t)$ on $\ft u_n$ is a rotation by $e^{-i tn^4}$, 
the lemma follows
from the rotation invariance of each $\rho_n$.
\end{proof}

\begin{lemma}\label{LEM:gauss1}
Given  a  complex-valued mean-zero  Gaussian random variable $g$ with variance $\s$, i.e.~$g \in \NN_\C(0, \s )$, 
let $  Tg = e^{i t |g|^2} g$ for some $t \in \R$.
Then, 	$Tg  \in \NN_\C(0, \s)$. 

\end{lemma}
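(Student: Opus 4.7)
The plan is to exploit the rotational invariance of the complex Gaussian distribution via a polar decomposition. Since $g \in \NN_\C(0,\sigma)$ has density proportional to $e^{-|z|^2/\sigma}$ on $\C$, which depends only on $|z|$, the law of $g$ is invariant under the map $z \mapsto e^{i\alpha} z$ for any deterministic $\alpha \in \R$. The transformation $Tg = e^{it|g|^2} g$ is a rotation whose angle depends on $|g|$, so the heart of the argument is to reduce to the deterministic-angle case after conditioning on $|g|$.

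First I would write $g = r e^{i\theta}$ with $r = |g|$ and $\theta = \arg g \in [0, 2\pi)$. By the explicit form of the density, $r$ and $\theta$ are independent, with $\theta$ uniformly distributed on $[0, 2\pi)$ and $r$ distributed as a (scaled) Rayleigh random variable. Then
\begin{align*}
Tg \;=\; e^{it r^2}\, r\, e^{i\theta} \;=\; r\, e^{i(\theta + tr^2)}.
\end{align*}
Set $\theta' := \theta + tr^2 \pmod{2\pi}$. Conditionally on $r$, the quantity $tr^2$ is deterministic, and a deterministic shift of a uniform random variable on $[0, 2\pi)$ modulo $2\pi$ is again uniform on $[0, 2\pi)$. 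Hence the conditional law of $\theta'$ given $r$ is the uniform law on $[0, 2\pi)$, which does not depend on $r$. Therefore $\theta'$ is independent of $r$ and uniformly distributed on $[0, 2\pi)$.

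Consequently $(r, \theta')$ has the same joint distribution as $(r, \theta)$, and so $Tg = r e^{i\theta'}$ has the same distribution as $g = r e^{i\theta}$, proving $Tg \in \NN_\C(0,\sigma)$. Alternatively, and perhaps more quickly, one could verify the identity $\E[F(Tg)] = \E[F(g)]$ for bounded measurable $F: \C \to \C$ by computing
\begin{align*}
\E[F(Tg)] \;=\; \frac{1}{\pi \sigma}\int_\C F\bigl(e^{it|z|^2} z\bigr)\, e^{-|z|^2/\sigma}\, dz
\end{align*}
and performing the change of variables $w = e^{it|z|^2} z$, whose Jacobian has modulus $1$ (it is a rotation for each fixed $|z|$, and $|w| = |z|$). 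There is no real obstacle here; the only subtlety is to observe that the angular shift, though random, is measurable with respect to $r$, which is precisely why conditioning on $r$ works.
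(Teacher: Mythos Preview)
Your main argument via polar decomposition is correct and complete. The key observation that $r$ and $\theta$ are independent with $\theta$ uniform, and that a shift of a uniform variable by an $r$-measurable quantity remains uniform and independent of $r$, is exactly right.

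This is a genuinely different route from the paper's proof. The paper works in Cartesian coordinates $(x,y) = (\Re g, \Im g)$, writes out $T^{-1}$ explicitly, and computes the $2\times 2$ Jacobian determinant by brute force, obtaining $\det D_{\mathbf u}T^{-1} = 1$ after a page of trigonometric cancellation; it then concludes by the change-of-variables formula. Your probabilistic argument bypasses this computation entirely by recognizing that in polar coordinates the map is the shear $(r,\theta) \mapsto (r, \theta + tr^2)$, which manifestly preserves the product structure of the law. Your approach is shorter and more conceptual; the paper's approach has the virtue of being completely elementary, requiring no probabilistic reasoning about conditional laws.

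One small caution on your alternative sketch at the end: the parenthetical justification ``it is a rotation for each fixed $|z|$'' does not by itself show the Jacobian has modulus $1$, since the angle depends on $|z|$ and the radial derivative of the angle contributes to the Jacobian. The correct reason is the one implicit in your main argument: in polar coordinates the map is $(r,\theta)\mapsto(r,\theta+tr^2)$, whose Jacobian is triangular with $1$'s on the diagonal. This is worth stating if you keep the alternative.
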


\begin{proof}
By viewing $\C \simeq \R^2$, 
let ${\bf x} = (x, y )  = (\Re g, \Im g)$
and ${\bf u} = 
(u, v )  = (\Re T g, \Im T g)$.
Noting that 
 $|Tg| = |g|$, we have $T^{-1}  g = e^{-it | g|^2}  g$.
In terms of ${\bf x}$ and ${\bf u}$, we have
\begin{align*}
{\bf x} = T^{-1} {\bf u}
= ( u \cos t |{\bf u}|^2 + v \sin t |{\bf u}|^2, 
-  u \sin t |{\bf u}|^2 + v \cos t |{\bf u}|^2).
\end{align*}
	
\noi	
Then, with $C_t = \cos t|{\bf u}|^2$ and $S_t = \sin t|{\bf u}|^2$, 
a direct computation yields
\begin{align*}
\det  D_{\bf u} T^{-1} 
&  = \left|\begin{matrix}
C_t  - 2t u^2 S_t  + 2t uv C_t
& 
S_t 
-2tuv S_t + 2t v^2 C_t\\
-S_t - 2t u^2 C_t - 2tuv S_t& 
C_t -  2t uv C_t - 2tv^2 S_t
\end{matrix}\right|\\
& = \big\{C_t^2 - 2 t uv C_t^2 - 2t v^2 S_t C_t\\
& \hphantom{XX}
- 2t u^2 S_t C_t + 4t^2 u^3 v S_t C_t + 4t^2 u^2 v^2 S_t^2\\
& \hphantom{XX}
+ 2 t uv C_t^2 - 4 t^2 u^2 v^2 C_t^2 - 4 t^2 u v^3 S_t C_t\big\}\\
& \hphantom{X}
 - \big\{-S_t^2 + 2t uv S_t^2 - 2t v^2 S_t C_t\\
 & \hphantom{XX}
-2 t u^2 S_t C_t + 4t^2 u^3 v S_t C_t - 4 t^2 u^2 v^2 C_t^2\\
& \hphantom{XX}
- 2t u v S_t^2 + 4 t^2 u^2 v^2 S_t^2 - 4 t^2 u v^3 S_t C_t\big\} \\
& = 1.
\end{align*}

Let $\mu$ and $\wt \mu$ be the probability distributions
for $g$ and $T g$.
Then, for a measurable set $A \subset \C \simeq \R^2 $, we have 
\begin{align*}
\wt \mu(A) & = \mu(T^{-1}A)
= \frac{1}{\pi \s}\int_{T^{-1}A} e^{-\frac{|{\bf x}|^2}{\s}} dx dy
= \frac{1}{\pi \s}\int_{A} e^{-\frac{|T^{-1} {\bf u}|^2}{\s}} |\det D_{\bf u} T^{-1}| du dv\\
& = \frac{1}{\pi\s}\int_{A} e^{-\frac{| {\bf u}|^2}{\s}}  du dv
= \mu(A). 
\end{align*}

\noi
This proves the lemma.
\end{proof}

Next, we extend Lemma \ref{LEM:gauss1} to the higher dimensional setting.

\begin{lemma}\label{LEM:gauss2a}
Let $s\in \R$ and $N \in \N$.
Then, for any $t \in \R$, 
the Gaussian measure
$\mu_{s, N}$ defined in \eqref{G4} is invariant under the map $\GG_t$
defined in \eqref{gauge1}.

\end{lemma}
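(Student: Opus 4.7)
The plan is to extend the change-of-variables argument of Lemma \ref{LEM:gauss1} from the single Gaussian on $\C$ to the finite-dimensional Gaussian $\mu_{s,N}$ on $E_N \simeq \C^{2N+1}$. The guiding observation is that, despite the nonlinear appearance of $\GG_t$, it is a ``rotation'' that preserves each Fourier modulus: since $|e^{2it\fint|u|^2}|=1$, we have $|\widehat{\GG_t[u]}(n)| = |\ft u_n|$ for every $|n|\le N$, and in particular $\GG_t$ preserves the quantity $\Phi(u) := \fint_\T |u|^2 dx = \sum_{|n|\le N}|\ft u_n|^2$.

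From this I would draw two separate conclusions. First, the Lebesgue density of $\mu_{s,N}$ given by \eqref{G4} depends on $u$ only through $\{|\ft u_n|\}_{|n|\le N}$, so this density is pointwise invariant under $\GG_t$. Second, the constancy of $\Phi$ along an orbit means that the curve $t\mapsto \GG_t[u]$ is the integral curve of the autonomous ODE
\begin{align*}
\dt u = 2 i \,\Phi(u)\, u
\end{align*}
on $E_N$, i.e.\ $\GG_t$ is the time-$t$ flow of the vector field $V(u) = 2i \Phi(u) u$. It then suffices to show that this flow preserves Lebesgue measure on $E_N$, which reduces to checking $\nabla\cdot V \equiv 0$. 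Writing $\ft u_n = x_n + i y_n$, the real components of $V$ at mode $n$ are $(-2\Phi\, y_n,\, 2\Phi\, x_n)$, and using $\partial_{x_n}\Phi = 2x_n$, $\partial_{y_n}\Phi = 2y_n$ one verifies the contributions $-4x_ny_n$ and $+4x_ny_n$ cancel for each $n$. Combining Lebesgue invariance of the flow with pointwise invariance of the density gives $(\GG_t)_*\mu_{s,N} = \mu_{s,N}$.

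There is no substantive obstacle here; the statement is essentially a finite-dimensional fact and the only point requiring care is the recognition that $\GG_t$ is an orbit-preserving rotation rather than a generic nonlinear map. As a sanity check, one could instead argue via disintegration: the diagonal phase rotation $R_\alpha : u \mapsto e^{i\alpha}u$ preserves both $\mu_{s,N}$ (by rotation invariance of each component, as in Lemma \ref{LEM:gauss1}) and the functional $\Phi$, so it preserves the conditional measure $\nu_r$ of $\mu_{s,N}$ given $\Phi(u)=r$; since $\GG_t$ acts on the level set $\{\Phi=r\}$ exactly as $R_{2tr}$, integrating against $f\circ\GG_t$ against the disintegration $\mu_{s,N}=\int \nu_r\, dP(r)$ yields the same value as integrating $f$, recovering the claim without any Jacobian computation.
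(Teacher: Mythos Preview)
Your primary argument is correct and essentially mirrors the paper's: both recognize $\GG_t$ as the time-$t$ flow of the vector field $V(u)=2i\Phi(u)u$ on $E_N$ and verify the divergence vanishes in real coordinates $(x_n,y_n)$. The paper packages this as the infinitesimal-generator criterion $\int \L_N F\,d\mu_{s,N}=0$ checked by a single integration by parts, while you split it into Liouville's theorem for Lebesgue measure plus pointwise invariance of the density; the computations are the same.

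Your alternative disintegration argument is a genuinely different and more elementary route that the paper does not take. Observing that on each level set $\{\Phi=r\}$ the map $\GG_t$ coincides with the diagonal rotation $u\mapsto e^{2itr}u$, which preserves $\mu_{s,N}$ by rotation invariance of each Gaussian factor (and preserves $\Phi$, hence the conditioning), gives the result with no ODE or Jacobian computation at all. This is arguably the cleanest proof of the lemma; the flow-based approach, on the other hand, generalizes more readily to the setting of Lemma~\ref{LEM:meas1a}, where the same Liouville-type reasoning reappears for the truncated nonlinear dynamics.
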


While we could adapt the proof of Lemma \ref{LEM:gauss1}
to the higher dimensional setting, this would involve computing
determinants of larger and larger matrices.
Hence, 
we present an alternative proof in the following.

\begin{proof}

Given $N \in \N$, let 
$E_N = \text{span}\{ e^{inx}: |n| \leq N\}$.
Given $u \in E_N$, 
let $v(t) = \GG_t[u]$ for $t \in \R$.
Then, noting that $\dt M(v(t)) = 0$, 
where $M(v(t)) = \sum_{|n|\leq N} | v_n(t)|^2$, 
we see that $ v_n$ satisfies the following system of ODEs:
\begin{align}
d  v_n = 2 i M(v)  v_n dt, \quad |n|\leq N, 
\label{H1}
\end{align}
	
\noi
With $a_n = \Re  v_n$ and $b_n = \Im  v_n$, 
we can rewrite \eqref{H1} as
\begin{align}
\begin{cases}
d  a_n  = - 2  M(v) b_n dt\\
d  b_n  = 2  M(v) a_n dt,
\end{cases}
\quad |n| \leq N.
\label{H2}
\end{align}

Let $\L_N$ be the infinitesimal generator for \eqref{H2}.
Then, $\mu_{s, N}$ is invariant under $\GG_t$ for any $t \in \R$
if and only if $(\L_N)^*\mu_{s, N} = 0$. See \cite{KS}.
Note that the last condition is equivalent to 
\begin{align}
\int_{(a, b) \in \R^{2N+2}} \L_N F (a, b) d\mu_{s, N}(a, b) = 0
\label{H3}
\end{align}

\noi
for all test functions $F\in C^\infty(\R^{2N+2}; \R)$.
From \eqref{H2}, we have
\[ \L_N F(a, b) 
= \sum_{|n|\leq N} \bigg( - 2M(a, b) b_n \frac{\dd}{\dd a_n}
+2 M(a, b) a_n \frac{\dd}{\dd b_n}\bigg) F(a, b),\]

\noi
where $M(a, b) = \sum_{|n|\leq N} (a_n^2 + b_n^2)$.
Then, by integration by parts, we have 
\begin{align*}
\int_{(a, b) \in \R^{2N+2}}&  \L_N F (a, b) d\mu_{s, N}(a, b) \notag \\
&  = 
 2 Z_N^{-1} 
 \sum_{|n|\leq N} \int_{ \R^{2N+2}} 
F(a, b) 
\frac{\dd}{\dd a_n}\bigg\{M(a, b) b_n 
e^{-\frac12 \sum_{|k|\leq N} \frac{a_k^2}{\jb{k}^{2s}} + \frac{b_k^2}{\jb{k}^{2s}}}
\bigg\}
d a db \notag \\
& \hphantom{X}
-  2   Z_N^{-1}  
\sum_{|n|\leq N}  \int_{ \R^{2N+2}} 
F(a, b) 
\frac{\dd}{\dd b_n}\bigg\{M(a, b) a_n 
e^{-\frac12 \sum_{|k|\leq N} \frac{a_k^2}{\jb{k}^{2s}} + \frac{b_k^2}{\jb{k}^{2s}}}
\bigg\}
d a db\notag\\
 & = 
 4 Z_N^{-1} 
 \sum_{|n|\leq N} \int_{ \R^{2N+2}} 
F(a, b) 
\bigg( 1 - \frac{M(a, b)}{2\jb{n}^{2s}}\bigg)
a_n b_n e^{-\frac12 \sum_{|k|\leq N} \frac{a_k^2}{\jb{k}^{2s}} + \frac{b_k^2}{\jb{k}^{2s}}}
d a db \notag \\
& \hphantom{X}
-  4   Z_N^{-1}  
\sum_{|n|\leq N}  \int_{ \R^{2N+2}} 
F(a, b) 
\bigg( 1 - \frac{M(a, b)}{2\jb{n}^{2s}}\bigg)
a_n b_n e^{-\frac12 \sum_{|k|\leq N} \frac{a_k^2}{\jb{k}^{2s}} + \frac{b_k^2}{\jb{k}^{2s}}}
d a db \\
& = 0
\end{align*}

\noi
This proves \eqref{H3}.
\end{proof}

In the following, 
we assume that $ s> \frac 12$ such that 
$\mu_s$ is a well-defined probability measure on $L^2(\T)$
and $\GG_t$ defined in \eqref{gauge1} makes sense
on $\supp(\mu_s) =  L^2(\T)$.

\begin{lemma}\label{LEM:gauss3}
Let $s>\frac 12$.  Then, 
for any $t \in \R$, 
the Gaussian measure
$\mu_s$ defined in \eqref{gauss0} is invariant under the map $\GG_t$.
\end{lemma}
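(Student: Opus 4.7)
The plan is to exploit that $\GG_t$ acts on every Fourier mode by multiplication by the same unimodular scalar $e^{2itM(u)}$, where $M(u) = \fint_\T|u|^2$. Under $\mu_s$, each Fourier coefficient is of the form $\jb{n}^{-s} g_n$ with $\{g_n\}_{n\in\Z}$ i.i.d.\ standard complex Gaussians, so it suffices to show that $\{g_n\}_{n\in\Z}$ has the same law as $\{e^{2itM} g_n\}_{n\in\Z}$, where the common phase shift $2tM$ depends on $\{g_n\}$ only through the moduli $\{|g_n|\}$.

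To make this precise, I would pass to polar coordinates, writing $g_n = \rho_n e^{i\alpha_n}$ with $\rho_n$ and $\alpha_n$ independent, the $\alpha_n$ i.i.d.\ uniform on $[0,2\pi)$, and the pairs $(\rho_n, \alpha_n)$ jointly independent across $n$. By Parseval, $M(u^\omega) = c\sum_{n\in\Z} \jb{n}^{-2s}\rho_n^2(\omega)$ for a harmless Fourier-convention constant $c$; the assumption $s>\tfrac12$ gives $\E \|u^\omega\|_{L^2}^2 = 2\sum_n \jb{n}^{-2s} <\infty$, so $M(u^\omega)$ is almost surely finite and measurable with respect to the sub-$\sigma$-algebra $\F_\rho$ generated by $\{\rho_n\}_{n\in\Z}$.

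Next, I would condition on $\F_\rho$. Given $\F_\rho$, the shift $2tM$ is a deterministic constant, while the phases $\{\alpha_n\}_{n\in\Z}$ remain jointly i.i.d.\ uniform on $[0,2\pi)$, since they are independent of $\{\rho_n\}$ to begin with. The uniform distribution on $[0,2\pi)$ is invariant under translation modulo $2\pi$, so the conditional law of $\{\alpha_n + 2tM\}_{n\in\Z}$ (viewed modulo $2\pi$) equals that of $\{\alpha_n\}_{n\in\Z}$. Taking expectation in $\F_\rho$, the unconditional joint law of $\{(\rho_n, \alpha_n + 2tM)\}_{n\in\Z}$ coincides with that of $\{(\rho_n, \alpha_n)\}_{n\in\Z}$, which is exactly the desired identity $(\GG_t)_* \mu_s = \mu_s$.

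The only point requiring care is the almost sure finiteness and $\F_\rho$-measurability of $M(u^\omega)$, which is handled by $s>\tfrac12$; no serious analytic obstacle arises, because $\GG_t$ acts globally by a scalar, so no finite-dimensional truncation/approximation is needed. The argument is the natural infinite-dimensional counterpart of Lemma~\ref{LEM:gauss1} (and of Lemma~\ref{LEM:gauss2a}), with the independence of the moduli from the phases under $\mu_s$ playing the role of the explicit Jacobian computation in the finite-dimensional case.
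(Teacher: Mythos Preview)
Your argument is correct and takes a genuinely different, more direct route than the paper.  The paper proceeds by first establishing the finite-dimensional invariance (Lemma~\ref{LEM:gauss2a}) via an infinitesimal-generator computation, and then reduces the infinite-dimensional statement to it by a truncation argument: for test functions $F_N$ depending only on frequencies $|n|\le N$, it splits the global phase $e^{2itM}$ into a low-frequency part and a high-frequency part, absorbs the latter by rotational invariance of the individual $g_n$, applies Lemma~\ref{LEM:gauss2a} to the remaining finite-dimensional integral, and finally passes to general $F\in C_b(L^2)$ by dominated convergence.  Your approach bypasses all of this: by writing $g_n=\rho_n e^{i\alpha_n}$ and conditioning on $\F_\rho=\sigma(\{\rho_n\})$, the phase $2tM$ becomes deterministic while the $\alpha_n$ remain i.i.d.\ uniform, so the common shift leaves the conditional (hence the unconditional) law unchanged.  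What you gain is simplicity---no need for Lemma~\ref{LEM:gauss2a}, no truncation, no limiting step; what the paper's route gains is that Lemma~\ref{LEM:gauss2a} is isolated as a result of independent interest and the truncation framework is consistent with the machinery used elsewhere in the paper.
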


Note that, when $s = 1$, Lemma \ref{LEM:gauss3} basically follows 
from Theorem 3.1 in \cite{NRSS} which exploits the properties of the Brownian loop
under conformal mappings.
For general  $s > \frac 12$, 
such approach does not seem to be appropriate.
In the following, we present the proof, using Lemma \ref{LEM:gauss2a}.

\begin{proof}
Fix $t \in \R$.
Given $N \in \N$,
let  $F_N \in C_b(L^2(\T); \R)$
be a test function depending only on the frequencies $\{|n|\leq N\}$.
Then,  we claim that
\begin{align}
\int_{L^2} F_N\circ \GG_t (u) d \mu_s (u) = \int_{L^2} F_N  (u) d \mu_s(u). 
\label{Ginv1}
\end{align}
	
\noi
With a slight abuse of notations, 
we write
\begin{align}
F_N(u) =
F\big(\{u_n\}_{|n|\leq N}\big)
=  F_N(u_{-N}, u_{-N+1},\dots, u_{ N-1}, u_N).
\label{Ginv1a}
\end{align}

Let $v = \GG_t[u]$, 
where $u$  is as in \eqref{gauss1}.
Then, we have
\[ v_n = e^{2 i t \sum_{k \in \Z} \frac{|g_k|^2}{\jb{k}^{2s}}} \frac{g_n}{\jb{n}^s}
= e^{2 i t \sum_{|k| >N} \frac{|g_k|^2}{\jb{k}^{2s}}}
\cdot e^{2 i t \sum_{|k| \leq N} \frac{|g_k|^2}{\jb{k}^{2s}}}\frac{g_n}{\jb{n}^s}.\] 

\noi
By the independence of $\{g_n \}_{|n|\leq N}$ and $\{g_n \}_{|n|> N}$, 
we can write $\O = \O_0 \times \O_1$
such that 
\[
g_n (\o) =\begin{cases}
 g_n(\o_0), \ \o_0 \in \O_0, 
& \text{if }|n|\leq N, \\
 g_n(\o_1), \ \o_1 \in \O_1, 
& \text{if }|n|> N.
\end{cases}
\]

\noi
Then, we have 
\begin{align}
\int_{L^2} F_N\circ \GG_t (u) d \mu_s (u)
= \int_{\O_1} I_N (\o_1)
d P(\o_1), 
\label{Ginv2}
\end{align}

\noi
where $I_N(\o_1)$ is given by 
\begin{align}
I_N(\o_1)
= \int_{\O_0}
F_N\bigg( 
\bigg\{
e^{2 i t \sum_{|k| >N} 
\frac{|g_k(\o_1)|^2}{\jb{k}^{2s}}}\cdot 
e^{2 i t \sum_{|k| \leq N} 
\frac{|g_k(\o_0)|^2}{\jb{k}^{2s}}}\frac{g_n(\o_0)}{\jb{n}^s}\bigg\}_{|n|\leq N}\bigg) 
d P(\o_0).
\label{Ginv2a}
\end{align}

\noi
Since $s > \frac 12$, 
we have $\mu(\o_1) : =  \sum_{|k| >N} 
\frac{|g_k(\o_1)|^2}{\jb{k}^{2s}} < \infty $ almost surely.
For fixed $\o_1 \in \O_1$, define $\{\wt g_n^{\o_1}\}_{|n|\leq N}$ by setting 
$\wt g_n^{\o_1} = e^{2it \mu(\o_1)} g_n$, $|n| \leq N$.
Then, by the rotational invariance of the standard complex-valued Gaussian random variables
and independence of 
 $\{g_n \}_{|n|\leq N}$ and $\{g_n \}_{|n|> N}$, 
we see that, for almost every $\o_1 \in \O_1$, 
 $\{\wt g_n^{\o_1}\}_{|n|\leq N}$ 
is a sequence of independent standard complex-valued Gaussian random variables
(in $\o_0 \in \O_0$).
In particular, 
the law of  $\{\wt g_n^{\o_1}\}_{|n|\leq N}$ 
is the same as that of
 $\{ g_n\}_{|n|\leq N}$,  
almost surely in $\o_1 \in \O_1$.
Then, 
from the definitions of $\mu_{s, N}$ and $\GG_t$, 
 we can rewrite \eqref{Ginv2a} as 
\begin{align*}
I_N(\o_1)
& = \int_{\O_0}
F_N\bigg( 
\bigg\{ e^{2 i t \sum_{|k| \leq N} 
\frac{|\wt g_k^{\o_1}(\o_0)|^2}{\jb{k}^{2s}}}\frac{\wt g_n^{\o_1}(\o_0)}{\jb{n}^s}\bigg\}_{|n|\leq N}\bigg) 
d P(\o_0)\\
& = \int_{\O_0}
F_N\bigg( 
\bigg\{ e^{2 i t \sum_{|k| \leq N} 
\frac{| g_k(\o_0)|^2}{\jb{k}^{2s}}}\frac{ g_n(\o_0)}{\jb{n}^s}\bigg\}_{|n|\leq N}\bigg) 
d P(\o_0)\\
& = 
\int_{E_N}
F_N(\GG_t u_N) d\mu_{s, N}(u_N) 
\end{align*}

\noi
for almost every $\o_1 \in \O_1$,
where $u_N = \P_{\leq N} u$ is as in \eqref{G2}.	
Then, it follows from 
Lemma \ref{LEM:gauss2a}
with  \eqref{Ginv1a} and \eqref{G1} that 
\begin{align}
I_N(\o_1)
= \int_{E_N}
F_N(\GG_t u_N) d\mu_{s, N}(u_N) 
= \int_{E_N}
F_N(u_N) d\mu_{s, N}(u_N) 
= \int_{L^2}
F_N(u) d\mu_{s}(u), 
\label{Ginv3}
\end{align}

\noi
for almost every $\o_1 \in \O_1$.
Note that the right-hand side of \eqref{Ginv3} is independent of $\o_1 \in \O_1$.
Therefore,  from \eqref{Ginv2} and \eqref{Ginv3},
we have
\begin{align*}
\int_{L^2} F_N\circ \GG_t (u) d \mu_s (u)
= \int_{\O_1} 
\int_{L^2}
F_N(u) d\mu_{s}(u)
d P(\o_1)
=  \int_{L^2}
F_N(u) d\mu_{s}(u) 
\end{align*}

\noi
This proves
\eqref{Ginv1}.

Next, 
given  $F \in C_b(L^2(\T); \R)$, 
let $F_N(u) = F(\P_{\leq N} u)$, $N \in \N$.
Then, $F_N(u)$ converges to $F(u)$ almost surely with respect to $\mu_s$.
Also, 
 $F_N(\GG_t u)$ converges to $F(\GG_t u)$ almost surely with respect to $\mu_s$.
Then, from the dominated convergence theorem and \eqref{Ginv1}, we have 
\begin{align*}
\int_{L^2} F\circ \GG_t (u) d \mu_s (u) 
& = \lim_{N \to \infty} \int_{L^2} F_N\circ \GG_t (u) d \mu_s (u) 
=\lim_{N\to \infty}  \int_{L^2} F_N  (u) d \mu_s(u)\\
& = \int_{L^2} F  (u) d \mu_s(u)
\end{align*}

\noi
for all $F \in C_b(L^2(\T); \R)$.
Hence, the lemma follows
(see, for example, 
  \cite[Proposition 1.5]{DaPrato}).
\end{proof}

Lastly, we conclude this section by stating 
the invariance property of 
 quasi-invariance
under a composition of two maps.

\begin{lemma}\label{LEM:comp}
Let $(X, \mu)$ be a measure space.
Suppose that $T_1$ and $T_2$
are maps on $X$ into itself
such that $\mu$ is quasi-invariant under $T_j$ for each $j = 1, 2$.
Then, $\mu$ is quasi-invariant under  $T = T_1 \circ T_2$.

\end{lemma}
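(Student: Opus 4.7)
The plan is to unpack the definition of quasi-invariance into the statement that the pushforward $T_*\mu$ and $\mu$ are mutually absolutely continuous, and then chain two such equivalences together. The central observation is the functorial identity $(T_1 \circ T_2)_* \mu = (T_1)_* \big( (T_2)_* \mu\big)$, which follows immediately from $(T_1 \circ T_2)^{-1}(A) = T_2^{-1}(T_1^{-1}(A))$ for any measurable set $A \subset X$.

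First I would record the elementary fact that pushforwards preserve absolute continuity: if $\nu_1 \ll \nu_2$ are two measures on $X$ and $T\colon X \to X$ is measurable, then $T_*\nu_1 \ll T_*\nu_2$. Indeed, if $(T_*\nu_2)(A) = 0$ then $\nu_2(T^{-1}A) = 0$, whence $\nu_1(T^{-1}A) = 0$ by absolute continuity, and therefore $(T_*\nu_1)(A) = 0$. Applying this in both directions shows that pushforward preserves the equivalence relation $\sim$ (mutual absolute continuity).

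With this in hand, the argument is a three-line chain. By quasi-invariance of $\mu$ under $T_2$, we have $(T_2)_*\mu \sim \mu$. Pushing both sides forward by $T_1$ and using the preservation property above yields
\begin{equation*}
(T_1)_*\big((T_2)_*\mu\big) \;\sim\; (T_1)_*\mu.
\end{equation*}
By quasi-invariance of $\mu$ under $T_1$, the right-hand side is equivalent to $\mu$. Transitivity of $\sim$ then gives
\begin{equation*}
T_*\mu \;=\; (T_1 \circ T_2)_*\mu \;=\; (T_1)_*\big((T_2)_*\mu\big) \;\sim\; \mu,
\end{equation*}
which is exactly quasi-invariance of $\mu$ under $T = T_1 \circ T_2$.

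There is no real obstacle: the statement is formal and requires only the definition of quasi-invariance, the compatibility of pushforward with composition, and the trivial lemma that pushforward preserves absolute continuity. The only point to be slightly careful about is to invoke the preservation lemma in \emph{both} directions (i.e.\ to use $(T_2)_*\mu \sim \mu$, not merely one-sided absolute continuity), which is why quasi-invariance is assumed as a two-sided condition.
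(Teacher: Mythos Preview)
Your proof is correct and essentially the same as the paper's: both arguments reduce to the identity $T^{-1}(A)=T_2^{-1}(T_1^{-1}(A))$ and chain the two quasi-invariance hypotheses. The paper carries out the null-set chase directly (showing $\mu(A)=0 \Leftrightarrow \mu(T_1^{-1}A)=0 \Leftrightarrow \mu(T_2^{-1}T_1^{-1}A)=0$), whereas you package the same step as ``pushforward preserves equivalence of measures'' and then compose; the content is identical.
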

\begin{proof}

Suppose that $A \subset X$ is a measurable set 
such that $\mu(A) = 0$.
By the quasi-invariance of $\mu$ under $T_1$, 
this is equivalent to $\mu(T_1^{-1}A) = 0$.
Then, 
by the quasi-invariance of $\mu$ under $T_2$, 
the pushforward measure $T_*\mu$ satisfies
\[T_*\mu(A) = \mu(T^{-1}A) 
= \mu\big(T_2^{-1}(T_1^{-1}A)\big) = 0.\]

\noi
Conversely, if $T_*\mu(A) = 0$, 
then we have $\mu(T_1^{-1}A)=0$,
which in turn implies $\mu(A) = 0$.
Hence, $\mu$ and $T_*\mu$ are mutually absolutely continuous.
\end{proof}

\section{Ramer's argument: $s > 1$}
\label{SEC:Ramer}

In this section, we present the proof of  Theorem \ref{THM:quasi}
for $s > 1$.
Our basic approach is to apply Ramer's result
after exhibiting a sufficient smoothing
on the nonlinear part.
As it is written, 
the equation \eqref{NLS1} or \eqref{NLS5}
does not manifest a smoothing
in an explicit manner.
In the following, we perform 
a normal form reduction
and establish a nonlinear smoothing
by exploiting the dispersion of the equation.

\subsection{Normal form reduction}
By writing \eqref{NLS5} in the integral form, we have
\begin{align}
v_n (t) 
 & = v_n(0) 
 -i \int_0^t \sum_{\G(n)} 
 e^{-i \phi(\bar n) t'} v_{n_1}\cj{v_{n_2}}v_{n_3}(t') dt' 
 +i \int_0^t | v_{n}|^2v_{n}(t') dt'\notag\\
&  = : v_n(0) +\fN(v)(n, t)+\fR(v)(n, t).
\label{ZNLS1}
\end{align}

\noi
Lemma \ref{LEM:phase} states that 
we have a non-trivial (in fact, fast)
oscillation 
caused by 
the phase function $\phi(\bar n)$
in 
the non-resonant part $\fN(v)$. 
The main idea of a normal form reduction is
to transform 
the non-resonant part $\fN(v)$ into smoother terms of higher degrees, 
exploiting this rapid oscillation.
More concretely, 
integrating by parts, we formally have
\begin{align}
\fN(v)(n, t) 
& =  \sum_{\G( n)}
\frac{ e^{-i \phi (\bar n) t'} }{\phi(\bar n)}
v_{n_1}(t')\cj{v_{n_2}(t')}v_{n_3}(t')\bigg|_{t' = 0}^t 
- \sum_{\G( n)}  \int_0^t 
\frac{ e^{-i \phi(\bar n) t'} }{\phi(\bar n)}
\dt(  v_{n_1}\cj{v_{n_2}}v_{n_3})(t') dt' \notag\\
& =  
\sum_{\G(n)}
\frac{ e^{-i \phi( \bar n) t} }{\phi(\bar n)}
v_{n_1}(t)\cj{v_{n_2}(t)}v_{n_3}(t) 
- \sum_{\G( n)}
\frac{ 1}{\phi(\bar n)}
v_{n_1}(0)\cj{v_{n_2}(0)}v_{n_3}(0) \notag \\
& \hphantom{X}
-2    \int_0^t \sum_{\G(n)}
\frac{ e^{-i \phi(\bar n)t' } }{\phi(\bar n)}
\big\{ \NN(v)_{n_1} + \RR(v)_{n_1}\big\}\cj{v_{n_2}}v_{n_3}(t') dt' \notag\\
& \hphantom{X}
-   \int_0^t \sum_{\G( n)}
\frac{ e^{-i \phi(\bar n) t' } }{\phi(\bar n)}
 v_{n_1}\cj{\big\{ \NN(v)_{n_2} + \RR(v)_{n_2}\big\}}v_{n_3}(t') dt'\notag\\
&  =: \I + \II
+ \III + \IV. 
 \label{Znonlin1}
\end{align}

\noi
In view of Lemma \ref{LEM:phase}, 
the phase function $\phi(\bar n)$ appearing in the denominators
allows us to exhibit a smoothing in $\fN(v)$.
See Lemma \ref{LEM:Znonlin} below.

At this point, the computation in \eqref{Znonlin1} 
is rather formal and thus
requires justification in several steps.
In the first step, we switched the order of the time integration
and the summation:
\begin{align}
 -i \int_0^t \sum_{\G(n)} 
 e^{-i \phi(\bar n) t'} v_{n_1}\cj{v_{n_2}}v_{n_3}(t') dt' 
= 
 -i  \sum_{\G(n)} \int_0^t
 e^{-i \phi(\bar n) t'} v_{n_1}\cj{v_{n_2}}v_{n_3}(t') dt'. 
\label{just1}
\end{align}

\noi
With $w = \mathcal{F}^{-1}\big(|\ft v_n|\big) = \sum_{ n \in \Z} |\ft v_n| e^{inx}$, we have
\begin{align}
\sum_{\G(n)} 
|  v_{n_1}\cj{v_{n_2}}v_{n_3} |
\leq \|w \|_{L^3}^3 
\les \| w\|_{H^{\frac 16}}^3
= \| v\|_{H^{\frac 16}}^3.
\label{just2}
\end{align}

\noi
Hence, 
the sum $\sum_{\G(n)} 
 e^{-i \phi(\bar n) t'} v_{n_1}\cj{v_{n_2}}v_{n_3}(t') $
is absolutely convergent
with a bound uniform in time $t'$, 
provided that $v \in C(\R; H^s(\T))$ with $s \geq \frac{1}{6}$.
This justifies \eqref{just1}.

If $v \in C(\R; H^s(\T))$ with $s \geq \frac{1}{6}$, 
it follows from \eqref{NLS5}
and a computation similar to \eqref{just2} that 
$ v_n \in C^1(\R)$.
This allows us to apply integration by parts
and the product rule.
Lastly, 
we need to justify
the switching of the time integration
and the summation in the last equality of \eqref{Znonlin1}.
By crudely estimating
with \eqref{NLS5}, \eqref{just2} 
and Lemma \ref{LEM:phase} (note that $|\phi(\bar n)| \geq 1$ on $\G(n)$),  
we have
\begin{align}
 \sum_{\G(n)}
\bigg|\frac{ e^{-i \phi(\bar n)t' } }{\phi(\bar n)}
\big\{ \NN(v)_{n_1} + \RR(v)_{n_1}\big\}\cj{v_{n_2}}v_{n_3}(t')\bigg|
&\les 
\| \NN(v)_{n_1} + \RR(v)_{n_1}\|_{\l^\infty_{n_1}}
\sum_{n_2, n_3}\frac{|v_{n_2}v_{n_3}|}{\jb{n_2}\jb{n_3}} \notag\\
&\les \|v(t')\|_{H^{\frac 16}}^3 \| v(t')\|_{L^2}^2.
\label{just3}
\end{align}

\noi
Hence, 
the series on the left-hand side of \eqref{just3}
is absolutely convergent
with a bound uniform in time $t'$, 
provided that $v \in C(\R; H^s(\T))$ with $s \geq \frac{1}{6}$.
This justifies the last equality in \eqref{Znonlin1}.

The following lemma
shows a nonlinear smoothing for \eqref{NLS5}.
Note that the amount of smoothing for $\fR(v)$ depends on the regularity
$s > \frac 12$.

\begin{lemma}\label{LEM:Znonlin}
Let $s> \frac 12$.
Then, we have 
\begin{align}
\| \fN(v)(t) \|_{H^{s+2}} & \les
\|v(0)\|_{H^s}^3 + \|v (t)\|_{H^s}^3 + t 
\sup_{t' \in [0, t]} \|v (t')\|_{H^s}^5, 
\label{Znonlin2}\\
\| \fR(v)(t) \|_{H^{3s}} & \les
  t  \sup_{t' \in [0, t]}  \|v (t')\|_{H^s}^3.
 \label{Znonlin3}
\end{align}
	
\end{lemma}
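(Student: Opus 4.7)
The argument splits naturally into the resonant estimate \eqref{Znonlin3} and the non-resonant estimate \eqref{Znonlin2}, and I would handle them separately.

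For \eqref{Znonlin3} I would use the pointwise-in-frequency bound $|\fR(v)(n,t)| \le t\sup_{t'\in[0,t]}|v_n(t')|^3$ coming straight from the definition. Summed against $\jb{n}^{6s}$,
\begin{equation*}
\|\fR(v)(t)\|_{H^{3s}}^2 \;\le\; t^2 \sum_{n\in\Z} \bigl(\jb{n}^{2s}\sup_{t'}|v_n(t')|^2\bigr)^3 \;\le\; t^2 \bigl(\sup_{n,\,t'}\jb{n}^{2s}|v_n|^2\bigr)^{2} \sup_{t'}\|v(t')\|_{H^s}^2,
\end{equation*}
which is bounded by $t^2\sup_{t'}\|v(t')\|_{H^s}^6$, since $\sup_n\jb{n}^{2s}|v_n|^2 \le \|v\|_{H^s}^2$.

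For \eqref{Znonlin2} I would start from the normal form decomposition $\fN(v) = \I+\II+\III+\IV$ already carried out in \eqref{Znonlin1}. The key smoothing mechanism is division by $\phi(\bar n)$: Lemma~\ref{LEM:phase} gives $|\phi(\bar n)| = |n_1-n_2|\,|n_1-n|\,M$ on $\G(n)$ with $M \ges \max(\jb{n_1},\jb{n_2},\jb{n_3},\jb{n})^2$, so $1/\phi(\bar n)$ delivers a clean two-derivative gain together with two summable difference factors $|n_1-n|^{-1}$ and $|n_1-n_2|^{-1} = |n-n_3|^{-1}$ (using $n=n_1-n_2+n_3$). For the boundary contributions $\I,\II$, I would apply Cauchy--Schwarz in the summation over $(n_1,n_3) \in \G(n)$ (with $n_2 = n_1+n_3-n$ determined), which separates the $H^s$-weighted trilinear $\|v(\tau)\|_{H^s}^3$, $\tau\in\{0,t\}$, from a residual counting sum
\begin{equation*}
\mathcal{A}_n \;:=\; \jb{n}^{2s+4}\!\!\sum_{\substack{(n_1,n_3)\in\Z^2\\ n_1,n_3\ne n}} \frac{1}{|\phi(\bar n)|^2\,\jb{n_1}^{2s}\jb{n_2}^{2s}\jb{n_3}^{2s}}.
\end{equation*}
Inserting Lemma~\ref{LEM:phase} yields $\jb{n}^{s+2}/|\phi(\bar n)| \les n_{\max}^s/(|n_1-n|\,|n-n_3|)$, and with $s>\tfrac12$ the Sobolev weights $\jb{n_j}^{-2s}$ absorb $n_{\max}^{2s}$ while the two difference factors secure summability in $(n_1,n_3)$. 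For the integral contributions $\III,\IV$, I would first note that \eqref{just3} (equivalently, a short Young/Sobolev estimate exploiting $s>\tfrac12$) provides the uniform bound $|\NN(v)_{n_1}(t')| + |\RR(v)_{n_1}(t')| \les \|v(t')\|_{H^s}^3$. Substituting this into \eqref{Znonlin1} and reapplying the trilinear estimate above costs one extra factor of $\|v(t')\|_{H^s}^2$ per $\NN$ or $\RR$ substitution, and the time integration produces the prefactor $t$, yielding the quintic tail in \eqref{Znonlin2}.

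The principal obstacle is the uniform-in-$n$ bound on $\mathcal{A}_n$: one has to case-split on where the maximum modulus among $\{|n_1|,|n_2|,|n_3|,|n|\}$ sits and carefully balance the regularity weights against the factor extracted from $\phi(\bar n)$. It is precisely this counting sum that pins down the threshold $s > \tfrac12$ as the borderline at which the Sobolev weights are just strong enough to make the sum converge; everything else in the argument is robust and essentially mechanical.
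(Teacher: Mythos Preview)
Your route differs from the paper's: for $\I$--$\IV$ the paper never introduces a counting sum at all, but simply uses $|\phi(\bar n)|\ges \jb{n}^2$ on $\G(n)$ together with the algebra property of $H^s(\T)$ for $s>\tfrac12$, obtaining
\[
\|\I\|_{H^{s+2}}\les\Big\|\jb{n}^s\sum_{n=n_1-n_2+n_3}|v_{n_1}v_{n_2}v_{n_3}|\Big\|_{\ell^2_n}=\big\|(\F^{-1}|\ft v|)^3\big\|_{H^s}\les\|v\|_{H^s}^3,
\]
and likewise for $\III,\IV$ with $\NN(v)+\RR(v)$ treated as an $H^s$ function. Your Cauchy--Schwarz route for $\I,\II$ is a valid alternative; note though that $\mathcal{A}_n$ is uniformly bounded for \emph{all} $s\ge0$ (the factors $|n_1-n|^{-2}|n-n_3|^{-2}$ already sum), so the threshold $s>\tfrac12$ does not originate there.

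There is a genuine gap in your treatment of $\III,\IV$. After substituting the uniform bound $|[\NN+\RR]_{n_1}|\les\|v\|_{H^s}^3$ you can no longer ``reapply the trilinear estimate'': with the $n_1$-slot replaced by a constant the relevant counting sum becomes
\[
\mathcal{B}_n=\jb{n}^{2s+4}\!\!\sum_{n_1,n_3\ne n}\frac{1}{|\phi(\bar n)|^2\jb{n_2}^{2s}\jb{n_3}^{2s}},
\]
and this is \emph{not} uniformly bounded for $s>1$ --- take $|n|$ large, $n_1=n+1$, $n_3=0$, $n_2=1$, so $|\phi|\sim|n|^3$ and the summand is $\sim|n|^{2s-2}$. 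The fix is to keep a weight on the first slot: use $\|\NN(v)+\RR(v)\|_{H^s}\les\|v\|_{H^s}^3$ (the algebra estimate, and this is where $s>\tfrac12$ actually enters) and then genuinely reuse $\mathcal{A}_n$ with all three weights intact.

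One small slip in the $\fR$ estimate: $\sum_n\jb{n}^{2s}\sup_{t'}|v_n(t')|^2$ is in general \emph{larger} than $\sup_{t'}\|v(t')\|_{H^s}^2$, since the supremum sits inside the sum. Apply Minkowski first, $\|\jb{n}^{3s}\fR(v)(n,t)\|_{\ell^2_n}\le\int_0^t\|\jb{n}^s v_n(t')\|_{\ell^6_n}^3\,dt'$, and then $\ell^2\hookrightarrow\ell^6$, as the paper does.
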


\begin{proof}
By Lemma \ref{LEM:phase} and the algebra property of $H^s(\T)$, $s > \frac 12$, we have
\begin{align*}
\| \I \|_{H^{s+2}} 
& \les \bigg\| \jb{n}^{s} \sum_{\G(n)}
|v_{n_1}(t)\cj{v_{n_2}(t)}v_{n_3}(t)| \bigg\|_{\l^2_n}
\les \bigg\| \jb{n}^{s} \sum_{n = n_1 - n_2 + n_3}
\prod_{j = 1}^3 |v_{n_j}(t)| \bigg\|_{\l^2_n} \\
& = \big\| \{\F^{-1}(|\ft v_n|)(t)\}^3\big\|_{H^s}
\les  \| v(t)\|_{H^s}^3.
\end{align*}

\noi
The second term $\II$ in \eqref{Znonlin1} can be estimated in an analogous manner.
Similarly, by Lemma \ref{LEM:phase},  \eqref{NLS5}, 
and the algebra property of $H^s(\T)$, $s > \frac 12$, we have
\begin{align*}
\| \III \|_{H^{s+2}} 
& \les 
t \sup_{t'\in [0, t]}
\bigg\| \jb{n}^s \sum_{\G(n)}
\big|\big\{ \NN(v)_{n_1} + \RR(v)_{n_1}\big\}\cj{v_{n_2}}v_{n_3}(t')\big|\bigg\|_{\l^2_n}\\
& = 
t \sup_{t'\in [0, t]}
\big\| \{\F^{-1}(|\ft v_n|)(t')\}^5\big\|_{H^s}
\les  t \sup_{t'\in [0, t]} \| v(t')\|_{H^s}^5.
\end{align*}

\noi
The fourth term $\IV$ in \eqref{Znonlin1} can be estimated in an analogous manner.
From \eqref{ZNLS1} and 
$\l^2_n \subset \l^6_n$, we have
\begin{align*}
\|\fR(v)(t)\|_{H^{3s}}
& \leq t \sup_{t'\in [0, t]}
\bigg(\sum_{n \in \Z} \jb{n}^{6s} | v_{n}(t')|^6\bigg)^\frac{1}{2}
= t \sup_{t'\in [0, t]}
\| \jb{n}^s v_n(t')\|_{\l^6_n}^3\\
& \leq t \sup_{t'\in [0, t]}
\| \jb{n}^s v_n(t')\|_{\l^2_n}^3
=  t \sup_{t'\in [0, t]}
\| v(t')\|_{H^s}^3.
\end{align*}

\noi
This proves the 
second estimate \eqref{Znonlin3}.
\end{proof}

\subsection{Consequence of Ramer's result}

In this subsection, we present the proof of  Theorem \ref{THM:quasi}
for  $s > 1$.
The main ingredient is Ramer's result \cite{RA}
along with the nonlinear smoothing discussed in the previous 
subsection.
We first recall the precise statement 
of the main result in  \cite{RA}
for readers' convenience.

\begin{proposition}[Ramer
\cite{RA}]\label{PROP:RA0}
Let $(i, H, E)$ be an abstract Wiener space and 
$\mu$ be the standard Gaussian measure on $E$.
Suppose that $T = \textup{Id} + K: U \to E$
be a continuous (nonlinear) transformation
from some open subset   $U\subset E$ into $E$
such that
\begin{itemize}
\item[(i)] $T$ is a homeomorphism of $U$ onto an open subset of $E$.
\item[(ii)]
We have $K(U) \subset H$
and $K:U \to H$ is continuous.

\item[(iii)]
For each $x \in U$, 
the map $DK(x)$ is a Hilbert-Schmidt operator on $H$.
Moreover, $DK: x \in U \to DK(x) \in  HS(H)$ is continuous.

\item[(iv)]
$\textup{Id}_H + DK(x) \in GL(H)$ for each $x \in U$.

\end{itemize}

\noi

\noi
Then, $\mu$ and $\mu\circ T $
are mutually absolutely continuous measures on $U$.
\end{proposition}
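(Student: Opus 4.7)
The plan is to reduce the infinite-dimensional quasi-invariance statement to a sequence of finite-dimensional Cameron--Martin--Jacobian computations, and then pass to the limit using the Hilbert--Schmidt structure of $DK$. Fix an orthonormal basis $\{e_k\}_{k \geq 1}$ of $H$, let $P_n$ denote the orthogonal projection onto $H_n := \text{span}(e_1,\dots,e_n)$ (extended to $E$ via the abstract Wiener structure), and set $K_n(x) := P_n K(x)$ and $T_n := \text{Id} + K_n$. Assumption~(iii) then gives $P_n DK(x) P_n \to DK(x)$ in the Hilbert--Schmidt norm uniformly on compact subsets of $U$, while~(iv) ensures that $I + P_n DK(x) P_n$ is invertible for all $n$ sufficiently large.

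In finite dimensions, the classical change-of-variables formula for the Gaussian measure $\gamma_n$ on $H_n$ yields, on $T_n(P_n U)$, an explicit Radon--Nikodym derivative
\[
\rho_n(y) = |\det(I + P_n DK(T_n^{-1}y) P_n)|^{-1} \exp\!\bigl(\langle K_n(T_n^{-1}y), y\rangle_{H_n} - \tfrac{1}{2}\|K_n(T_n^{-1}y)\|_{H_n}^2\bigr).
\]
This is nothing more than the Jacobian formula in $\R^n$ combined with the explicit form of the Gaussian density, and together with the Cameron--Martin shift lemma it already gives quasi-invariance at each finite level.

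The main obstacle -- the technical heart of Ramer's original argument -- is that $\det(I + P_n DK(x) P_n)$ need not converge as $n \to \infty$ for Hilbert--Schmidt (but not trace-class) perturbations. The fix is to rewrite the Jacobian via the Carleman--Fredholm determinant $\det_2(I + A) := \det(I+A)\, e^{-\text{tr}(A)}$, which is continuous on the Hilbert--Schmidt ideal; the compensating factor $\exp(\text{tr}(P_n DK P_n))$ is then absorbed into the bilinear term, and the renormalized pairing $\langle K_n(x), x\rangle_{H_n} - \text{tr}(P_n DK(x) P_n)$ converges $\mu$-almost surely to the Skorokhod (Paley--Wiener) divergence integral $\delta(K)$, defined through the Gaussian integration-by-parts formula. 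This identifies the candidate limiting density
\[
\rho(y) = |\det_2(I + DK(T^{-1}y))|^{-1} \exp\!\bigl(-\delta(K)(T^{-1}y) - \tfrac{1}{2}\|K(T^{-1}y)\|_H^2\bigr),
\]
which is strictly positive and locally $\mu$-integrable on $T(U)$.

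The concluding step is to justify the convergence $\rho_n \to \rho$ in $L^1_{\text{loc}}(\mu)$, using the continuity of $K : U \to H$ and $DK : U \to HS(H)$ from~(ii)--(iii) together with uniform Hilbert--Schmidt bounds to obtain the required uniform integrability, and thereby to deduce $(T_*\mu)|_{T(U)} \ll \mu|_{T(U)}$. Applying the same argument to the local inverse $T^{-1}$ (whose existence and continuity follow from~(i), and whose derivative $(I + DK(T^{-1}y))^{-1} - I$ is again Hilbert--Schmidt by~(iv) together with the ideal property) yields the reverse absolute continuity $\mu|_U \ll (T_*\mu)|_U$, and combining the two directions gives the desired mutual absolute continuity on $U$.
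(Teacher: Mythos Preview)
The paper does not prove this proposition. It is stated with the attribution ``Ramer \cite{RA}'' and introduced with the sentence ``We first recall the precise statement of the main result in \cite{RA} for readers' convenience''; no proof is given, and the paper immediately proceeds to \emph{apply} the result via Proposition~\ref{PROP:RA}. So there is nothing in the paper to compare your argument against.

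That said, your sketch is a recognizable outline of the standard route to results of this type (finite-dimensional approximation, replacement of the Fredholm determinant by the Carleman--Fredholm determinant $\det_2$ to handle Hilbert--Schmidt rather than trace-class perturbations, and identification of the compensated linear term with a divergence/Skorokhod integral). A few points would require genuine work to turn this into a proof: you should not expect $T_n = \text{Id} + P_n K$ to be a homeomorphism of $U$ just because $T$ is, so the finite-dimensional change of variables needs to be set up more carefully (e.g.\ by working fiberwise over $P_n E$ and using a localized inverse function theorem driven by (iv)); the almost-sure convergence of $\langle K_n(x),x\rangle_{H_n} - \text{tr}(P_n DK(x)P_n)$ to $\delta(K)$ is not automatic for a general nonlinear $K$ and is where the continuity hypotheses (ii)--(iii) are really used; and the uniform integrability step is the hardest part and is only asserted. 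None of this is a wrong turn, but be aware that the actual content of Ramer's theorem lives precisely in these gaps.
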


\noi
Here,  $HS(H)$ denotes
the space of Hilbert-Schmidt operators on $H$
and $GL (H)$ denotes invertible linear operators on $H$
with a bounded inverse.

Given $t, \tau \in \R$, let 
$\Phi(t):L^2\to L^2$ be the solution map for \eqref{NLS1}
and 
$ \Psi(t, \tau):L^2\to L^2$ be the solution map for \eqref{NLS5},\footnote{Note that \eqref{NLS5}
is non-autonomous.
We point out that this non-autonomy does not play an essential
role in the remaining part of the paper, since all the estimates hold uniformly in $t \in \R$.
}
sending initial data at time $\tau$
to solutions at time $t$. 
When $\tau =0$, we may denote $\Psi(t, 0)$ by $\Psi(t)$ for simplicity.

By inverting the transformations  \eqref{gauge2} and  \eqref{gauge3} 
with  \eqref{ZNLS1}, 
we have
\begin{align}
\Phi(t)(u_0) = \GG^{-1} \circ S(t) \circ \Psi(t)(u_0)
=  \GG^{-1} \circ  S(t) (u_0 + \fN(v)(t)+\fR(v)(t)), 
\label{nonlin2}
\end{align}

\noi
where $v(t) = \Psi(t)(u_0)$ and $\fN$ is given by \eqref{Znonlin1}.
Now, write  $\Psi(t) = \text{Id} + K(t)$, where
\[ K(t)(u_0) : =  \fN(v)(t)+\fR(v)(t) \]

\noi
and $v$ is the solution to \eqref{NLS5} with $v|_{t = 0} = u_0$.
In view of Lemmas \ref{LEM:gauss2}, \ref{LEM:gauss3}, and \ref{LEM:comp}, 
it suffices to show that $\mu_s$ is quasi-invariant under
$\Psi(t)$.

Fix $s > 1$ and $\s_1 > \frac{1}{2}$ sufficiently close to $\frac 12$.
First, note that $\mu_s$ is a probability measure on $H^{s-\s_1}(\T)$.
Given $R>0$, let $B_R$ be the open ball of radius $R$ centered at the origin in 
$H^{s - \s_1}(\T)$.
The following proposition
shows that the hypotheses of Ramer's result in \cite{RA}
are indeed satisfied.

\begin{proposition}\label{PROP:RA}
Let $s > 1$.  Given $R > 0$, 
there exists $\tau = \tau(R) > 0$ such that,
for each $t \in  (0, \tau(R)]$, the following statements hold:

\begin{itemize}
\item[(i)] 

$\Psi(t)$ is a homeomorphism of $B_R$
onto an open subset of $H^{s-\s_1}(\T)$.

\smallskip

\item[(ii)]  
We have $K(t) (B_R) \subset H^s(\T)$ and $K(t): B_R \to H^s(\T)$ is continuous.

\smallskip

\item[(iii)] 
For each $u_0 \in B_R$, 
the map $DK(t)|_{u_0}$ is a Hilbert-Schmidt operator on $H^s(\T)$.
Moreover, 
$DK(t): u_0 \in B_R \mapsto DK(t)|_{u_0} \in HS(H^s(\T))$
is continuous.

\smallskip

\item[(iv)] $\textup{Id}_{H^s} + DK(t)|_{u_0} \in GL (H^s(\T))$ 
for each $u_0 \in B_R$.

\end{itemize}

\end{proposition}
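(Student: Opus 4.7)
The plan is to verify the four items in order, with the Hilbert--Schmidt property (iii) being the main obstacle. Fix $s > 1$ and choose $\s_1 \in (\frac 12, s-\frac 12)$ close enough to $\frac 12$ that
\[ \alpha := \min(2-\s_1,\, 2s-3\s_1) > \tfrac 12;\]
this is possible since $s > 1$. Applying Lemma \ref{LEM:Znonlin} at regularity $s-\s_1>\frac 12$ yields, for any solution $v$ of \eqref{NLS5} on $[0, t]$,
\[ \|K(t)(u_0)\|_{H^{s+\alpha}} \les \|\fN(v)(t)\|_{H^{s+\alpha}} + \|\fR(v)(t)\|_{H^{s+\alpha}} \les P\big(\sup_{t'\in [0, t]} \|v(t')\|_{H^{s-\s_1}},\, t\big)\]
for some polynomial $P$. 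Combined with local well-posedness of \eqref{NLS5} in $H^{s-\s_1}(\T)$ (inherited from Proposition \ref{PROP:GWP} via the invertible transformations \eqref{gauge2}--\eqref{gauge3}), this produces $\tau(R)>0$ such that $\Psi(t)$ is Lipschitz on $B_R$ for every $t \in (0, \tau(R)]$, with $\|v(t')\|_{H^{s-\s_1}}\le 2R$ on $[0, t]$.

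Items (i) and (ii) follow quickly. For (i), time-reversibility of \eqref{NLS5} produces a continuous inverse $\Psi(0, t)$, and local solvability of the backward Cauchy problem at each image point yields openness of $\Psi(t)(B_R)$. For (ii), the inclusion $K(t)(B_R) \subset H^{s+\alpha}(\T) \subset H^s(\T)$ is exactly the smoothing bound above, and continuity of $K(t)$ into $H^s(\T)$ follows from continuity of the flow in $H^{s-\s_1}(\T)$ together with multilinear estimates applied to the difference of two solutions, in the spirit of the proof of Lemma \ref{LEM:Znonlin}.

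For (iii), the key step is to derive the analogous smoothing estimate for the Fr\'echet derivative. Differentiating the normal form identity \eqref{Znonlin1} in $u_0$ along $h\in H^s(\T)$ produces an expression that is multilinear in $v$ and linear in $\dot v = D_{u_0}v\cdot h$, where $\dot v$ solves the linearization of \eqref{NLS5} with initial datum $h$ and, by standard energy estimates, satisfies $\|\dot v(t')\|_{H^{s-\s_1}} \les C(R)\|h\|_{H^{s-\s_1}}$. Crucially, the denominator $\phi(\bar n)$ from Lemma \ref{LEM:phase} is untouched by the differentiation, so repeating the proof of Lemma \ref{LEM:Znonlin} mutatis mutandis yields
\[ \|DK(t)|_{u_0}(h)\|_{H^{s+\alpha}} \les C(R)\,\|h\|_{H^{s-\s_1}}. \]
Since the embedding $H^{s+\alpha}(\T)\hookrightarrow H^s(\T)$ has Hilbert--Schmidt norm squared equal to $\sum_n \jb{n}^{-2\alpha}<\infty$ when $\alpha>\frac 12$, and Hilbert--Schmidt operators form a two-sided ideal, $DK(t)|_{u_0}$ is Hilbert--Schmidt on $H^s(\T)$. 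Continuity of $u_0\mapsto DK(t)|_{u_0}$ in the HS norm follows from the same multilinear estimate applied to differences $v-v'$ of solutions from $u_0$ and $u_0'$.

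Finally, for (iv), differentiating the identity $\Psi(0, t)\circ \Psi(t, 0) = \textup{Id}$ at $u_0$ gives $(\textup{Id}_{H^s} + DK(t)|_{u_0})^{-1} = \textup{Id}_{H^s} + D\wt K(t)|_{\Psi(t)u_0}$, where $\wt K$ is the backward analogue of $K$; the same estimates as in (iii) applied to the backward flow show this inverse is bounded on $H^s(\T)$. (Shrinking $\tau(R)$ further allows an alternative Neumann series argument since $\|DK(t)\|_{H^s\to H^s}\to 0$ as $t\to 0$.) The principal obstacle is the smoothing estimate in (iii): one must carefully track all terms produced by differentiating \eqref{Znonlin1} (including those where the Duhamel substitution $\dt v_{n_1} = \NN(v)_{n_1} + \RR(v)_{n_1}$ is itself linearized along $\dot v$), and verify that every term retains the gain $\alpha > \frac 12$ that underlies the Hilbert--Schmidt conclusion.
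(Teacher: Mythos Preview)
Your proposal is correct and follows essentially the same route as the paper. In particular, your smoothing exponent $\alpha = \min(2-\s_1,\,2s-3\s_1)$ is exactly the maximal $\s_2$ allowed by the paper's constraints \eqref{Zlin3a}; the paper phrases the Hilbert--Schmidt conclusion as $DK(t)|_{u_0} = \jb{\dx}^{-\s_2}\circ A_t$ with $A_t$ bounded on $H^s$, which is equivalent to your factorization through the Hilbert--Schmidt embedding $H^{s+\alpha}\hookrightarrow H^s$. The paper is more explicit in writing out the seven-term expansion of $DK(t)|_{u_0}(w(0))$ after substituting the equation for $\dt v$, whereas you summarize this as ``differentiating the normal form identity and repeating the proof of Lemma~\ref{LEM:Znonlin} mutatis mutandis''; both arrive at the same estimate \eqref{Zlin4}, and your closing caveat about linearizing the Duhamel substitution is precisely the point the explicit expansion addresses.
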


We first present the proof of 
Theorem \ref{THM:quasi} for $s > 1$,
assuming Proposition \ref{PROP:RA}.
Thanks to Ramer's result (Proposition \ref{PROP:RA0} above), 
 Proposition \ref{PROP:RA}
implies that
 $\mu_s$ and 
 the pullback measure $\Psi(t)^*\mu_s := \mu_s\circ\Psi(t)$
are mutually absolutely continuous
as measures restricted to the ball $B_R$
 for any $t \in (0, \tau(R)]$.

\begin{proof}[Proof of Theorem \ref{THM:quasi} for $s > 1$]
Given $R>0$, 
let $B_R \subset 
H^{s - \s_1}$ be the open ball of radius $R$ centered at the origin as above.
Fix $T > 0$. 
It follows from the growth estimate \eqref{LWP4} of  the $H^{s-\s_1} $-norm 
that 
\begin{align}
\sup_{t \in [0, T]} \|v(t)\|_{H^{s-\s_1}} \leq C(T, R)  =: R^*
\label{quasi1}
\end{align}

\noi
for all solutions $v$ to \eqref{NLS5} with $v|_{t = 0} \in B_R$.

Suppose that $A \in \mathcal{B}_{H^{s-\s_1}}$
is a Borel set in $H^{s-\s_1}$ such that 
$\mu_s(A) = 0$.
Given $R>0$, let $R^*$ be as in \eqref{quasi1}.
Then, from \eqref{quasi1}, we have 
\begin{align}
\Psi(t) (A\cap B_R ) 
\subset B_{R^*}
\label{quasi2}
\end{align}

\noi
for all $t \in [0, T]$.
Noting that 
$ \mu_s(A\cap B_R) =0$, 
it follows from 
Proposition \ref{PROP:RA}
and the result in \cite{RA}
that 
\begin{align}
 \mu_s(\Psi(t)(A\cap B_R)) =0
\label{quasi3}
 \end{align}

\noi
for any $0 \leq t \leq \tau$, 
where  $\tau = \tau(R^*)$  is as in Proposition \ref{PROP:RA}.
In view of  \eqref{quasi2}, 
we can iteratively apply
Proposition \ref{PROP:RA}
and the main result in \cite{RA} on time interval $[j \tau, (j+1) \tau]$
and see that \eqref{quasi3} holds for all $t \in [0, T]$.
In particular, we have 
\[ \mu_s(\Psi(T)(A\cap B_R)) =0.\]

\noi
Now, letting $R \to \infty$, 
it follows from the continuity from below of a measure  that 
$ \mu_s(\Psi(T)(A))=0$.
Note that the choice of $T$ was arbitrary.
In view of  the time reversibility of the equation \eqref{NLS5}, 
we conclude that 
 $\mu_s$ is quasi-invariant under the flow of \eqref{NLS5}.
Therefore,  Theorem \ref{THM:quasi} follows from 
\eqref{nonlin2} with
 Lemmas \ref{LEM:gauss2}, \ref{LEM:gauss3}, and \ref{LEM:comp}.
\end{proof}

The remaining part of this section is devoted to the proof of Proposition \ref{PROP:RA}.
The  claim (i) follows from the well-posedness of \eqref{NLS5} in $H^{s-\s_1}$.
In particular, the continuity of $\Psi(t)$ on $H^{s-\s_1}$ with the time reversibility 
implies (i).
As before, from \eqref{LWP4}, we have the uniform growth bound:
\begin{align}
\sup_{t \in [0, \tau]} \|v(t)\|_{H^{s-\s_1}} \leq C^{R^\theta \tau} R =: R_\tau
\label{Q1}
\end{align}

\noi
for all solutions $v$ to \eqref{NLS5} with $v|_{t = 0} = u_0 \in B_R$.
Then, 
the  claim 
(ii) follows from  Lemma \ref{LEM:Znonlin}
and the continuity of $\Psi(t)$ on $H^{s-\s_1}$.

We postpone the proof of the claim (iii)
and first prove the claim (iv).
For fixed $u_0 \in B_R \subset H^{s-\s_1}$ and $t \in \R$, 
define a map $F:H^s \to H^s$
by 
\[ F(h) = \Psi(t)(u_0 + h) - u_0=  h + K(t) (u_0 + h) , \quad  h \in H^s.\]

\noi
Then, by computing a derivative of $F$ at the origin, we have\footnote{
By viewing $( H^s, H^{s-\s_1}, \mu_s)$ as an abstract Wiener space, 
$DF|_{0}$
 is  the so-called $H$-derivative of  $F$ at $0$,
 where $H = H^s$ is the Cameron-Martin space.
 See \cite{RA}.}
$DF|_{0} = \textup{Id}_{H^s} + DK(t)|_{u_0}$.
This is clearly a linear map.
Moreover, the boundedness of $DF|_{0}$ on $H^s$
follows from the claim (iii).
Note that $F$ is invertible
with the inverse $F^{-1}$ given by 
\[ F^{-1}(h) =
\Psi(-t)(u_0 + h) - u_0 
=  h + K(-t) (u_0 + h).\]

\noi
Hence, it follows from the chain rule
that $DF|_{0} = \textup{Id}_{H^s} + DK(t)|_{u_0}$ is invertible.
Moreover, we have
\begin{equation}
 (DF|_{0})^{-1} =  \textup{Id}_{H^s} + DK(-t)|_{u_0}.
\label{Q2}
 \end{equation}

\noi
Hence, we proved the claim (iv) except for
the boundedness of $(DF|_{0})^{-1}$.
We will prove the boundedness of $(DF|_{0})^{-1}$
 at the end of this section.

Next, we prove the claim (iii).
In the following, we will prove that 
$DK(t)|_{u_0}$ is Hilbert-Schmidt on $H^s$
for 
$u_0 \in B_R \subset H^{s-\s_1}$
as long as $t = t(R) \ll1$. 
Given $u_0 \in B_R \subset H^{s-\s_1}$, 
let $v$ be the global solution to \eqref{NLS5}
with $v|_{t = 0} = u_0$.

We first introduce some notations.
Given a multilinear\footnote{By multilinearity, we mean it is either linear or conjugate linear in each argument, 
i.e.~linear over real numbers.} expression $\mathcal{M}(v, v, \dots, v)$, 
we use $\mathcal{M}(v^*, v^*, \dots, v^*)$
to denote the sum of the form
$\mathcal{M}(w, v, \dots) + \mathcal{M}(v, w,v,  \dots)$, 
where each  multilinear  term has exactly one factor of $w$ and the remaining arguments are $v$.
For example, 
we have 
\begin{align*}
v^*_{n_1}(t)\cj{v^*_{n_2}(t)}v^*_{n_3}(t) 
= w_{n_1}(t)\cj{v_{n_2}(t)}v_{n_3}(t) 
+ v_{n_1}(t)\cj{w_{n_2}(t)}v_{n_3}(t) 
+v_{n_1}(t)\cj{v_{n_2}(t)}w_{n_3}(t). 
\end{align*}

\noi
We use a similar convention for multilinear expressions in $v(0)$.
In this case, we use 
 $\mathcal{M}(v^*(0), v^*(0), \dots)$
to denote the sum of the form
$\mathcal{M}(w(0), v(0), \dots) + \mathcal{M}(v(0), w(0),v(0),  \dots)$, 
where each multilinear term has exactly one factor of $w(0)$ and the remaining arguments are $v(0)$.

Let $w(t)$ be a solution to the following linear equation:
\begin{align}
\begin{cases}
\displaystyle \dt w_n = -i \sum_{\G(n)} e^{-i \phi(\bar n) t} v^*_{n_1}\cj{v^*_{n_2}}v^*_{n_3}
+ i |v^*_n|^2 v^*_n\\
w|_{t= 0} = w(0).
\end{cases}
\label{Zlin2}
\end{align}

\noi
Given $(m_1, m_2, m_3) \in \Z^3$
and $n \in \Z$,  we use the following shorthand notation:
\begin{align}
 (\bar m, n) := (m_1, m_2, m_3, n).
 \label{Gam2}
 \end{align}

\noi
Then, 
by a direct computation with 
\eqref{NLS5}, 
\eqref{ZNLS1},  and \eqref{Znonlin1},  we have
\begin{align*}
\F\big[DK(t)|_{u_0}(w(0))\big](n)
& =  i \int_0^t |v^*_n(t')|^2v^*_{n}(t') dt'
\notag\\
& \hphantom{X}
+ 
\sum_{\G( n)}
\frac{ e^{-i \phi( \bar n) t} }{\phi(\bar n)}
v^*_{n_1}(t)\cj{v^*_{n_2}(t)}v^*_{n_3}(t) 
- \sum_{\G( n)}
\frac{ 1}{\phi(\bar n)}
v^*_{n_1}(0)\cj{v^*_{n_2}(0)}v^*_{n_3}(0) \notag \\
& \hphantom{X}
+2 i   \int_0^t 
\sum_{\substack{
(n_1, n_2, n_3) \in \G(n)\\
(m_1, m_2, m_3) \in \G(n_1)}}
\frac{ e^{-i \phi(\bar n)t' -i \phi(\bar m, n_1)t'} }{\phi(\bar n)}
 v^*_{m_1}\cj{ v^*_{m_2}} v^*_{m_3}\cj{v^*_{n_2}}v^*_{n_3}(t') dt' \notag\\
& \hphantom{X}
-2 i   \int_0^t \sum_{\G( n)}
\frac{ e^{-i \phi(\bar n)t' } }{\phi(\bar n)}
 |v^*_{n_1}|^2v^*_{n_1}\cj{v^*_{n_2}}v^*_{n_3}(t') dt' \notag\\
& \hphantom{X}
-  i \int_0^t 
\sum_{\substack{
(n_1, n_2, n_3) \in \G(n)\\
(m_1, m_2, m_3) \in \G(n_2)}}
\frac{ e^{-i \phi(\bar n) t' +i \phi(\bar m, n_2) t'} }{\phi(\bar n)}
 v^*_{n_1}\cj{v^*_{m_1}}v^*_{m_2}\cj{v^*_{m_3}}v^*_{n_3}(t') dt' \notag\\
& \hphantom{X}
+  i \int_0^t \sum_{\G( n)}
\frac{ e^{-i \phi(\bar n) t' } }{\phi(\bar n)}
 v^*_{n_1}|v^*_{n_2}|^2\cj{v^*_{n_2}}v^*_{n_3}(t') dt', 
\end{align*}
	
\noi
where 
$\phi(\bar n)$ and $\G(n)$ are as in \eqref{phi1} and \eqref{Gam1}, respectively.

Fix $\s_2 > \frac 12$ (to be chosen later)
and 
write
\begin{align*}
DK(t)|_{u_0}(w(0)) = \jb{\dx}^{-\s_2} \circ A_t(w(0))
\end{align*}

\noi
where $A_t(w(0))$ is given by
\begin{align}
\F\big[A_t(w(0))\big](n)
& =  i \int_0^t \jb{n}^{\s_2} |v^*_n(t')|^2v^*_{n}(t') dt'
\notag\\
& \hphantom{X}
+ 
\sum_{\G( n)}
\frac{ e^{-i \phi( \bar n) t} }{\phi(\bar n)}
\jb{n}^{\s_2} v^*_{n_1}(t)\cj{v^*_{n_2}(t)}v^*_{n_3}(t) 
- \sum_{\G(n)}
\frac{ 1}{\phi(\bar n)}
\jb{n}^{\s_2} v^*_{n_1}(0)\cj{v^*_{n_2}(0)}v^*_{n_3}(0) \notag \\
& \hphantom{X}
+2 i   \int_0^t 
\sum_{\substack{
(n_1, n_2, n_3) \in \G(n)\\
(m_1, m_2, m_3) \in \G(n_1)}}
\frac{ e^{-i \phi(\bar n)t' -i \phi(\bar m, n_1)t'} }{\phi(\bar n)}
\jb{n}^{\s_2}  v^*_{m_1}\cj{ v^*_{m_2}} v^*_{m_3}\cj{v^*_{n_2}}v^*_{n_3}(t') dt' \notag\\
& \hphantom{X}
-2 i   \int_0^t \sum_{\G( n)}
\frac{ e^{-i \phi(\bar n)t' } }{\phi(\bar n)}
\jb{n}^{\s_2}  |v^*_{n_1}|^2 v^*_{n_1}\cj{v^*_{n_2}}v^*_{n_3}(t') dt' \notag\\
& \hphantom{X}
-  i \int_0^t 
\sum_{\substack{
(n_1, n_2, n_3) \in \G(n)\\
(m_1, m_2, m_3) \in \G(n_2)}}
\frac{ e^{-i \phi(\bar n) t' +i \phi(\bar m, n_2) t'} }{\phi(\bar n)}
\jb{n}^{\s_2}  v^*_{n_1}\cj{v^*_{m_1}}v^*_{m_2}\cj{v^*_{m_3}}v^*_{n_3}(t') dt' \notag\\
& \hphantom{X}
+  i \int_0^t \sum_{\G( n)}
\frac{ e^{-i \phi(\bar n) t' } }{\phi(\bar n)}
\jb{n}^{\s_2}  v^*_{n_1}|v^*_{n_2}|^2\cj{v^*_{n_2}}v^*_{n_3}(t') dt', 
\label{Zlin3}
\end{align}

\noi
Note that $\jb{\dx}^{-\s_2}$ is a Hilbert-Schmidt operator on $H^s$.
Thus, 
if we  prove that $A_t$ is bounded on $H^s$,
then it follows that 
$DK(t)|_{u_0}$ is Hilbert-Schmidt on $H^s$.
Hence, we focus on proving the boundedness of $A_t$ on $H^s$
in the following.

Let $t \in [0,  1]$.
Given $ s > 1$, choose $\s_1, \s_2 > \frac{1}{2}$ such that 
\begin{align}
s- \s_1 > \tfrac{1}{2}, 
\quad 
\tfrac{s+\s_2}{3} \leq s - \s_1, 
\quad \text{and}
\quad
s + \s_2 - 2 \leq s - \s_1.
\label{Zlin3a}
\end{align}

\noi
Applying Young's inequality and $\l^2_n \subset \l^6_n$
to \eqref{Zlin3} with Lemma \ref{LEM:phase}
and \eqref{Zlin3a}, we have
\begin{align}
\|A_t(w(0))\|_{H^s} 
\les  \sup_{t' \in [0, t]}
\bigg\{ & \|v(t')\|^2_{H^\frac{s+\s_2}{3}}\|w(t')\|_{H^\frac{s+\s_2}{3}}
 + \|v(t')\|^2_{H^{s-\s_1}} \|w(t')\|_{H^{s-\s_1}}\notag\\
& 
+ \|v(t')\|^4_{H^{s-\s_1}} \|w(t')\|_{H^{s-\s_1}}
\bigg\}.
\label{Zlin4}
\end{align}

Given $\tau > 0$, 
it follows from \eqref{LWP4} that
\begin{align}
\sup_{t \in [0, \tau]}\|v(t)\|_{H^{s - \s_1 }}
\leq C(R).
\label{Zlin5}
\end{align}


\noi
Then, 
from \eqref{Zlin2} and \eqref{Zlin5} with \eqref{Zlin3a}, we have
\begin{align*}
\sup_{t \in [0, \tau]}\|w(t)\|_{H^{s - \s_1 }}
& \leq \|w(0)\|_{H^{s}} + C \tau \sup_{t \in [0, \tau]}\| v(t)\|^2_{H^{s - \s_1 }}\| w(t)\|_{H^{s - \s_1 }}\notag \\
& \leq \|w(0)\|_{H^{s}} + C(R) \tau  \sup_{t \in [0, \tau]}\| w(t)\|_{H^{s - \s_1 }}.
\end{align*}

\noi
In particular, by 
choosing  $\tau = \tau(R) > 0$ sufficiently small, 
we obtain 
\begin{align}
\sup_{t \in [0, \tau]}\|w(t')\|_{H^{s - \s_1 }} 
\les \|w(0)\|_{H^{s}}.
\label{Zlin6}
\end{align}

\noi
Finally, it follows from \eqref{Zlin4}, \eqref{Zlin5}, and \eqref{Zlin6}
with \eqref{Zlin3a} that
\begin{align*}
\|A_t(w(0))\|_{H^s} 
\leq C(R) \|w(0)\|_{H^{s}}.
\end{align*}

\noi
Therefore, $A_t$ is bounded on $H^s$ 
and hence 
$DK(t)|_{u_0}$ is a Hilbert-Schmidt operator on $H^s$
for all $t \in [0, \tau]$.
The second claim in (iii) basically follows from 
the continuous dependence of \eqref{NLS5} and \eqref{Zlin2} (in $v$)
and thus we omit details.

It remains to prove  the boundedness of $(DF|_{0})^{-1} = (\textup{Id}_{H^s} + DK(t)|_{u_0})^{-1}$
By  the time reversibility of the equation
and 
 \eqref{Q2}, 
 the argument above shows that $(DF|_0)^{-1} - \text{Id}_{H^s}$
 is Hilbert-Schmidt on $H^s$
by choosing $\tau = \tau(R)$ sufficiently small.
In particular,  $(DF|_0)^{-1}$
 is bounded on $H^s$.
This completes the proof of Proposition \ref{PROP:RA}.

 \begin{remark}\label{REM:nec}\rm
 The condition $s > 1$ 
 is necessary for this argument.
 In estimating 
 the resonant term, i.e.~the first term in  
 \eqref{Zlin3} by the $H^{s-\s_1}$-norms of its arguments, 
 we need to use
 the second condition $\tfrac{s+\s_2}{3} \leq s - \s_1$ in \eqref{Zlin3a}.
Thus, we must have 
\[s \geq \frac{3\s_1 + \s_2}{2} > 1,\]

\noi
since $\s_1, \s_2 > \frac 12$.
 
 \end{remark}

\section{Proof of Theorem \ref{THM:quasi}: $s > \frac 34$}
\label{SEC:quasi}

In this section, we present the proof of Theorem \ref{THM:quasi}
for $s > \frac 34$.
The basic structure of our argument 
follows 
the argument introduced in \cite{TzBBM} by the second author 
in the context of the (generalized) BBM equation, 
with one importance difference.
While the energy estimate in \cite{TzBBM}
was carried out on the $H^s$-norm of
solutions (to the truncated equations),
we carry out our energy estimate
on a modified energy.
This introduction of a modified energy
is necessary to exhibit a hidden nonlinear smoothing,
exploiting the dispersion of the equation.
See Proposition \ref{PROP:energy} below.
 This, in turn, forces us to work
 with the weighted Gaussian measure
 $\rho_{s, N, r, t}$ and $\rho_{s, r, t}$ 
 adapted to this modified energy, 
instead of the Gaussian measure $\mu_{s, r}$
with an $L^2$-cutoff.  
See \eqref{Gibbs1} and \eqref{Gibbs1a} below for the definitions
of  $\rho_{s, N, r, t}$ and $\rho_{s, r, t}$. 
Lastly, we point out that this usage of the modified energy is 
close to the spirit of higher order modified energies in the $I$-method
introduced by Colliander-Keel-Staffilani-Takaoka-Tao \cite{CKSTT1, CKSTT2}.

%


%
%

As in Section \ref{SEC:Ramer}, 
we carry out our analysis on \eqref{NLS5}.
Let us first introduce the following truncated approximation to \eqref{NLS5}:
\begin{align}
\dt v_n 
& = 
\ind_{|n|\leq N}\bigg\{-i 
\sum_{\G_N(n)} e^{-i \phi(\bar n) t} v_{n_1}\cj{v_{n_2}}v_{n_3}
+ i |v_n|^2 v_n\bigg\},  
\label{NLS6}
\end{align}

\noi
where $\G_N(n)$ is defined by 
\begin{align}
\G_N(n) & = \G(n) \cap \{ (n_1, n_2, n_3) \in \Z^3: |n_j|\leq N\}\notag\\
& = \{(n_1, n_2, n_3) \in \Z^3:\, 
 n = n_1 - n_2 + n_3,  \,  n_1, n_3, \ne n, 
 \text{ and } |n_j| \leq N\}.
\label{Gam3}
\end{align}

\noi
A major part of this section is devoted to 
the study of the dynamical properties of \eqref{NLS6}.
Note that \eqref{NLS6} is an infinite dimensional system ODEs
for the Fourier coefficients $\{ v_n \}_{n \in \Z}$, 
where the flow is constant on the high frequencies $\{|n|> N\}$.

We also consider the following finite dimensional system of ODEs:
\begin{align}
\dt v_n = 
-i 
\sum_{\G_N(n)} e^{-i \phi(\bar n) t} v_{n_1}\cj{v_{n_2}}v_{n_3}
+ i |v_n|^2 v_n,  \qquad |n| \leq N.
\label{NLS7}
\end{align}

%

\noi
Given $t, \tau \in \R$, 
 denote by $\Psi_N(t, \tau)$ and $\wt \Psi_N(t, \tau)$
the solution maps of \eqref{NLS6}
and \eqref{NLS7}, 
sending initial data at time $\tau$
to solutions at time $t$, respectively.
For simplicity, we set 
\begin{align}\Psi_N(t) = \Psi_N(t, 0)
\qquad \text{and}
\qquad 
\wt \Psi_N(t) = \wt \Psi_N(t, 0)
\label{Psit}
\end{align}

\noi
 when $\tau = 0$. 
Then, we have the following relations:
\begin{align}
 \Psi_N(t, \tau) = \wt \Psi_N (t, \tau) \P_{\leq N} + \P_{>N}
\qquad \text{and}
\qquad
\P_{\leq N} \Psi_N(t, \tau) = \wt \Psi_N (t, \tau) \P_{\leq N}.
\label{flow1}
 \end{align}

\subsection{Energy estimate}\label{SUBSEC:energy}

In this subsection, we establish a key energy estimate.
Before stating the main proposition, let us first perform
a preliminary computation.
Given   a smooth solution $u$ to \eqref{NLS1}, 
let $v$ be as in \eqref{gauge3}.
Then, from \eqref{NLS5}, we have
\begin{align}
 \frac{d}{dt} \| u (t) \|_{H^s}^2
=  \frac{d}{dt} \|v(t) \|_{H^s}^2
= - 2\Re i 
\sum_{n \in \Z} \sum_{\G(n)}
e^{ - i \phi(\bar n) t} \jb{n}^{2s} v_{n_1} \cj{v_{n_2}} v_{n_3} \cj{v_n}.
\label{E0}
\end{align}

\noi
Then, differentiating by parts, 
i.e.~integrating by parts without an integral sign,\footnote{This is indeed  a normal form reduction 
 applied to the evolution equation \eqref{E0} for $\|v(t)\|_{H^s}^2$.
 Compare this with the normal form reduction argument in Section \ref{SEC:Ramer}.}
we obtain
\begin{align}
\frac{d}{dt} \|v(t) \|_{H^s}^2
& = 
 2\Re \frac{d}{dt} \bigg[
\sum_{n \in \Z} \sum_{\G(n)}
\frac{e^{ - i \phi(\bar n) t}}{ \phi(\bar n)} \jb{n}^{2s} v_{n_1} \cj{v_{n_2}} v_{n_3} \cj{v_n}\bigg]\notag \\
& \hphantom{X}
- 2\Re 
\sum_{n \in \Z} \sum_{\G(n)}
\frac{e^{ - i \phi(\bar n) t}}{ \phi(\bar n)} \jb{n}^{2s} 
\dt (v_{n_1} \cj{v_{n_2}} v_{n_3} \cj{v_n}).
\label{E1}
\end{align}

\noi
This motivates us to define the following quantity.
Given $s > \frac 12$, define the modified energy $E_t(v)$ by
\begin{align}
E_t(v) & = \|v\|_{H^s}^2 
- 2\Re 
\sum_{n \in \Z} \sum_{\G(n)}
\frac{e^{ - i \phi(\bar n) t}}{ \phi(\bar n)} \jb{n}^{2s} v_{n_1} \cj{v_{n_2}} v_{n_3} \cj{v_n} \notag\\
& =: \|v\|_{H^s}^2  + R_t(v).
\label{Hamil2}
\end{align}

\noi
Then, we have the following energy estimate.

\begin{proposition}\label{PROP:energy}
Let $s > \frac 34$.
Then, 
for any sufficiently small $\eps > 0$, 
there exist small $\theta  > 0$
and $C>0$
such that 
\begin{align} 
\bigg|\frac{d}{dt} E_t(\P_{\leq N} v) \bigg|
\leq C
 \| v \|_{L^2}^{4+ \theta} \|v\|_{H^{s - \frac 12 - \eps}}^{2-\theta}, 
\label{E1a}
\end{align}

\noi
for all $N \in \N$
and any solution $v$ to \eqref{NLS6},
 uniformly in $t \in \R$.

\end{proposition}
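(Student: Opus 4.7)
The plan is to show that the modified energy $E_t$ is designed precisely so that the quartic (degree four in $v$) contribution to $\tfrac{d}{dt}\|\P_{\leq N} v\|_{H^s}^2$ is cancelled by $\tfrac{d}{dt} R_t(\P_{\leq N} v)$, leaving a sextic remainder that can be controlled by exploiting $\phi(\bar n)^{-1}$. To this end, I would first differentiate $\|\P_{\leq N}v\|_{H^s}^2$ along \eqref{NLS6} and observe, as in the passage from \eqref{E0} to \eqref{E1}, that since the factor $e^{-i\phi(\bar n)t}$ is the time derivative of $\frac{e^{-i\phi(\bar n)t}}{-i\phi(\bar n)}$ and since the $i|v_n|^2v_n$ resonant contribution vanishes pointwise by imaginary cancellation, we obtain
\begin{align*}
\frac{d}{dt}E_t(\P_{\leq N}v)
= -2\Re \sum_{|n|\leq N} \sum_{\G_N(n)}
\frac{e^{-i\phi(\bar n)t}}{\phi(\bar n)}\jb{n}^{2s}\dt\bigl(v_{n_1}\cj{v_{n_2}}v_{n_3}\cj{v_n}\bigr),
\end{align*}
which is sextic in $v$ once we insert \eqref{NLS6} for each $\dt v_{n_j}$.

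Next, I would split the resulting expression according to which of the four Fourier coefficients is differentiated and whether the substituted $\dt v_{n_j}$ supplies its non-resonant part $\NN(v)_{n_j}$ or its resonant part $i|v_{n_j}|^2 v_{n_j}$. This produces two classes of sextic terms: (a) doubly oscillatory sextics with phase $\phi(\bar n)\pm\phi(\bar m,n_j)$, and (b) resonant sextics in which three of the six factors collapse onto a single frequency. In both classes the crucial algebraic gain is that, by Lemma \ref{LEM:phase},
\begin{align*}
|\phi(\bar n)|^{-1}\les \frac{1}{|n_1-n_2|\,|n_1-n|\,\jb{n_{\max}}^2},
\qquad n_{\max}:=\max\bigl(|n|,|n_1|,|n_2|,|n_3|\bigr),
\end{align*}
so that the effective weight $\jb{n}^{2s}/\phi(\bar n)$ carries at worst $\jb{n_{\max}}^{2s-2}$, i.e.\ a genuine gain whenever $s<1$.

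For the core estimate I would then dyadically decompose the frequencies $n,n_1,n_2,n_3$ (and the two additional frequencies arising from substitution) and use Cauchy--Schwarz and Plancherel to bound each dyadic piece by a product of six factors of $v$, putting the weight on the two largest frequencies. The $(n_1-n_2)$ and $(n_1-n)$ denominators would be controlled either by absorbing into an $\l^2$ sum in the two free difference variables, or by using a Cauchy--Schwarz in $(n_1-n_2,n_1-n)$ together with the divisor-type bound $|n_1-n_2||n_1-n|\geq 1$. The remaining four factors should be placed in $L^4$ via Sobolev, using the $1$-D embedding $H^{1/4}(\T)\embeds L^4(\T)$, and then interpolated via
\begin{align*}
\|v\|_{H^{1/4}}\les \|v\|_{L^2}^{1-\alpha}\,\|v\|_{H^{s-\frac12-\eps}}^{\alpha}, \qquad \alpha=\tfrac{1/4}{s-1/2-\eps},
\end{align*}
so that, when combined with the $\jb{n_{\max}}^{2s-2}$ weight on the two ``large'' factors, the overall estimate produces a bound of the required shape $\|v\|_{L^2}^{4+\theta}\|v\|_{H^{s-\frac12-\eps}}^{2-\theta}$.

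The main obstacle is the balance that determines the threshold $s>\tfrac34$: for the interpolation above to give an exponent $\alpha<1$ one must have $s-\tfrac12-\eps>\tfrac14$, i.e.\ $s>\tfrac34+\eps$, and simultaneously the gain $\jb{n_{\max}}^{2s-2}$ from $\phi^{-1}$ must beat the derivative cost put on the two ``large'' factors. The hardest case to handle is the nearly-diagonal one, in which all six frequencies are comparable to $n_{\max}$ and both factors $|n_1-n_2|$ and $|n_1-n|$ are $O(1)$: here the only remaining smoothing is $\jb{n_{\max}}^{-(2-2s)}$, and one must carefully redistribute this gain together with the $H^{s-1/2-\eps}$ weight using Young's and Cauchy--Schwarz in the two difference variables to retain $\l^2$-summability. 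Once that borderline case is settled, the remaining configurations are strictly easier, either because one of the differences $|n_1-n_2|$, $|n_1-n|$ is itself comparable to $\jb{n_{\max}}$ (providing extra gain), or because $|n|\ll n_{\max}$ so that $\jb{n}^{2s}/\jb{n_{\max}}^2$ is already a very small factor.
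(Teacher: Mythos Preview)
Your outline is correct and follows essentially the same strategy as the paper: the modified energy is chosen so that the quartic contribution cancels, leaving a sextic remainder carrying the factor $\phi(\bar n)^{-1}$, and the threshold $s>\tfrac34$ emerges from needing $H^{1/4}$-type control to be interpolated between $L^2$ and $H^{s-\frac12-\eps}$.

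The implementation in the paper differs from your sketch in two respects worth noting. First, rather than placing four factors in $L^4$, the paper decouples the inner substituted trilinear by bounding $\bigl\|\sum_{\G(n_1)}v_{m_1}\cj{v_{m_2}}v_{m_3}\bigr\|_{\ell^\infty_{n_1}}\les\|v\|_{H^{1/6}}^3$ via $H^{1/6}\hookrightarrow L^3$, and then interpolates this as $\|v\|_{L^2}^{1+\theta}\|v\|_{H^{1/4+\gamma}}^{2-\theta}$; only three outer factors $v_{n_2},v_{n_3},v_n$ remain, handled by Cauchy--Schwarz. Second, the paper organizes the outer sum by the level set $\mu=(n-n_1)(n-n_3)$ and invokes the divisor bound $\#\Gamma(n,\mu)\les|\mu|^\delta$ to control $\sum_\mu|\mu|^{-1-2\delta}\#\Gamma(n,\mu)$; your proposed direct Cauchy--Schwarz in the two difference variables $(n-n_1,n-n_3)$ achieves the same thing (indeed $\sum_{a,b\ne0}|ab|^{-1-2\delta}<\infty$ without any arithmetic input), so the divisor estimate is a stylistic choice here rather than an essential ingredient. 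The paper also splits explicitly into the regimes $s<1$ and $s\ge1$, the latter requiring the weight $\jb{n}^{2s-2}$ to be redistributed onto large-frequency factors, which your dyadic scheme would handle equivalently.
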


Recall that the probability  measures $\mu_s$ and $\mu_{s, r}$
defined in \eqref{gauss0} and \eqref{Gibbs1b}
are supported on $H^{s - \frac 12 -\eps}(\T)$ for any $\eps > 0$,
while we have $\|v \|_{L^2} \leq r$ in the support of $\mu_{s, r}$.	
	
Before proceeding to the proof of this proposition, 	
recall the following arithmetic fact \cite{HW}.
Given $n \in \N$, the number $d(n)$ of the divisors of $n$
satisfies
\begin{align}
d(n) \leq C_\dl n^\dl
\label{divisor}
\end{align}

\noi
for any $\dl > 0$.

\begin{proof}

Let $v$ be a solution to \eqref{NLS6}.
Then, from \eqref{E1} and  \eqref{Hamil2} with \eqref{NLS6}, we have
\begin{align}
\frac d{dt} E_t(\P_{\leq N} v)
& = 
\NN_1(v)+\RR_1(v)
+ \NN_2(v)+\RR_2(v)
+\NN_3(v)+\RR_3(v), 
\label{E2-}
\end{align}

\noi
where  $\NN_j(v)$ and $\RR_j(v)$, $j = 1, 2, 3$, are defined by 
\begin{align}
\NN_1(v)(t) 
&:= 
 4\Re  i   \sum_{|n|\leq N} 
\sum_{\substack{
(n_1, n_2, n_3) \in \G_N(n)\\
(m_1, m_2, m_3) \in \G_N(n_1)}}
\frac{ e^{-i \phi(\bar n)t -i \phi(\bar m, n_1)t} }{\phi(\bar n)}
\jb{n}^{2s}
 v_{m_1}\cj{ v_{m_2}} v_{m_3}\cj{v_{n_2}}v_{n_3} \cj{v_n} \notag\\
\RR_1(v)(t) 
&:= 
- 4 \Re i   \sum_{|n|\leq N} \sum_{\G_N( n)}
\frac{ e^{-i \phi(\bar n)t } }{\phi(\bar n)}
\jb{n}^{2s} |v_{n_1}|^2v_{n_1}\cj{v_{n_2}}v_{n_3}\cj{v_n} \notag\\
\NN_2(v)(t) 
&:= 
 - 2\Re i
\sum_{|n|\leq N}
\sum_{\substack{
(n_1, n_2, n_3) \in \G_N(n)\\
(m_1, m_2, m_3) \in \G_N(n_2)}}
\frac{ e^{-i \phi(\bar n) t +i \phi(\bar m, n_2) t} }{\phi(\bar n)}
\jb{n}^{2s} v_{n_1}\cj{v_{m_1}}v_{m_2}\cj{v_{m_3}}v_{n_3}\cj{v_n} \notag\\
\RR_2(v)(t) 
&:= 
 2\Re  i \sum_{|n|\leq N} \sum_{\G_N( n)}
\frac{ e^{-i \phi(\bar n) t } }{\phi(\bar n)}
\jb{n}^{2s} v_{n_1}|v_{n_2}|^2\cj{v_{n_2}}v_{n_3}\cj{v_n} \notag\\
\NN_3(v)(t) 
&:= 
-  2\Re i 
\sum_{|n|\leq N} 
\sum_{\substack{(n_1, n_2, n_3) \in \G_N( n)\\(m_1, m_2, m_3) \in \G_N( n)}}
\frac{ e^{-i \phi(\bar n) t +i \phi(\bar m, n) t} }{\phi(\bar n)}
\jb{n}^{2s} v_{n_1} \cj{v_{n_2}}v_{n_3}
 \cj{v_{m_1}}v_{m_2}\cj{v_{m_3}} \notag\\
\RR_3(v)(t) 
&:= 
2\Re  i \sum_{|n|\leq N} \sum_{\G_N( n)}
\frac{ e^{-i \phi(\bar n) t } }{\phi(\bar n)}
\jb{n}^{2s} v_{n_1}\cj{v_{n_2}}v_{n_3}|v_{n}|^2\cj{v_n}.
\label{E2}
\end{align}

\noi
Here, $(\bar m, n) = (m_1, m_2, m_3, n)$
and $(\bar m, n_j) = (m_1, m_2, m_3, n_j)$ are 
as in \eqref{Gam2}.
For simplicity of the presentation, 
we drop the restriction on the summations in \eqref{E2}
with the understanding that $v_n = 0$ for $|n|> N$. 
Moreover, we can assume that all the Fourier coefficients are non-negative.
In the following, we establish 
uniform (in $t$) estimates for these multilinear terms
$\NN_j$ and $\RR_j$, $j = 1, 2, 3$. 
For simplicity, we suppress the $t$-dependence
with the understanding that 
all the estimates hold with implicit constants independent of $t \in \R$.

Given $n, \mu \in \Z$, define $\G(n, \mu)$ by 
\begin{align*}
 \G(n, \mu) 
: \! & =\G(n) \cap \{ (n_1, n_2, n_3) \in \Z^3:  \mu = (n-n_1)(n-n_3)\} \notag \\
& = \{ (n_1, n_2, n_3) \in \Z^3: n = n_1 - n_2 + n_3, \ \notag\\
& \hphantom{XXXXXXXXXX||}
n_1, n_3 \ne n,\   \mu = (n-n_1)(n-n_3)\}. 
\end{align*}

\noi
Then, 
given $\dl > 0$, 
it follows from the divisor counting estimate
\eqref{divisor} that
\begin{align}
\# \G(n, \mu)=  \sum_{\G(n, \mu)} 1 \leq C_\dl |\mu|^\dl.
\label{Ediv}
\end{align}

\noi
In the following, 
we use \eqref{Ediv} to estimate $\NN_j(v)$ and $\RR_j(v)$, $j = 1, 2, 3$.
For simplicity of the presentation, we drop multiplicative constants depending on $\dl > 0$.

We now estimate $\NN_1(v)$.
We first consider the case $s <1 $.
By Sobolev's inequality and interpolation, we have 
\begin{align}
\bigg\|\sum_{\G(n_1)} v_{m_1}\cj{ v_{m_2}} v_{m_3}\bigg\|_{\l^\infty_{n_1}}
& \leq \|\F^{-1}(|\ft v_n|)\|_{L^3}^3 \les \|v\|_{H^\frac{1}{6}}^3 
 \les \| v \|_{L^2}^{1+\theta} \|v\|_{H^{\frac{1}{4}+\g}}^{2-\theta}
\label{E2a}
\end{align}

\noi
for small $\g > 0$ and some $\theta = \theta(\g) >0$.
Then, 
by Lemma \ref{LEM:phase}
and Cauchy-Schwarz inequality (in $n$ and then in $ n_2, n_3$) with \eqref{E2a}, we have 
\begin{align}
|\NN_1(v)|
& \les
   \sum_{n \in \Z} \sum_{\mu \ne 0} 
   \sum_{\G(n, \mu)}
\frac{1}{|\mu|n_{\max}^{2-2s}}
v_{n_2}v_{n_3} 
v_n
\bigg\|\sum_{\G(n_1)} v_{m_1}\cj{ v_{m_2}} v_{m_3}\bigg\|_{\l^\infty_{n_1}} \notag\\
& \leq
\| v \|_{L^2}^{2+\theta} \|v\|_{H^{\frac{1}{4}+\g}}^{2-\theta}\notag\\
& \hphantom{XX}
\times
 \Bigg\{   \sum_{n \in \Z} 
\bigg(\sum_{\mu \ne 0} 
   \frac{1}{|\mu|^{1+2\dl}}
   \sum_{\G( n, \mu)} 1\bigg)
\bigg(\sum_{\G(n)}
\frac{v_{n_2}^2 v_{n_3} ^2 }{|(n_2 - n_3)(n-n_3)|^{1-2\dl}n_{\max}^{4-4s}}\bigg)\Bigg\}^{\frac 12}
\label{E3}
\end{align}

\noi
for small $\g, \dl > 0$
such that $ 5 -  4s - 2\dl  > 1$,
where $n_{\max} : = \max(|n|, |n_1|, |n_2|, |n_3|)$.
From the divisor counting argument \eqref{Ediv}, 
we have 
\begin{align}
|\NN_1(v)|
& \les
\| v \|_{L^2}^{2+\theta} \|v\|_{H^{\frac{1}{4}+\g}}^{2-\theta}
 \Bigg\{   \sum_{n \in \Z} 
\bigg(\sum_{\mu \ne 0} 
   \frac{1}{\mu^{1+2\dl}} |\mu|^{\dl}\bigg)
\bigg(\sum_{\G(n)}
\frac{
v_{n_2}^2 v_{n_3} ^2 }{|(n - n_1)(n-n_3)|^{1-2\dl}n_{\max}^{4-4s}}\bigg)\Bigg\}^{\frac 12}  \notag\\
& \les
\| v \|_{L^2}^{2+\theta} \|v\|_{H^{\frac{1}{4}+\g}}^{2-\theta}
 \Bigg\{  
 \sum_{n_2, n_3 \in \Z} v_{n_2}^2 v_{n_3} ^2 
  \sum_{n \ne n_3} 
\frac{1}{|n-n_3|^{1-2\dl}\jb{n}^{4-4s}}\Bigg\}^{\frac 12}  \notag\\
& \les
\| v \|_{L^2}^{4+\theta} \|v\|_{H^{s - \frac 12 - \eps}}^{2-\theta}
\label{E4}
\end{align}

\noi
for sufficiently small $\dl, \eps, \g > 0$, 
provided that  $s > \frac 34$.

Next, we consider the case $s \geq 1$.
Note that for $(n_1, n_2, n_3) \in \G(n)$, 
we have $\max_{j = 1,2, 3}|n_j|\ges |n|$.
First, suppose that $|n_1| \ges |n|$.
In this case, we use the fact that 
 $\max_{j = 1,2, 3}|m_j|\ges |n_1|$
for $(m_1, m_2, m_3) \in \G (n_1)$.
Without loss of generality, assume that  $|m_1|\ges |n_1| (\ges |n|)$.
Proceeding as in \eqref{E3} and \eqref{E4} with \eqref{E2a}, 
Cauchy-Schwarz inequality, 
\eqref{Ediv}, 
and interpolation, we have
\begin{align}
|\NN_1(v)|
& \les
   \sum_{n \in \Z} \sum_{\mu \ne 0} 
   \sum_{\G(n, \mu)}
\frac{1}{|\mu| \jb{n}^{2\dl}}
{v_{n_2}}v_{n_3} 
\jb{n}^{s - 1+\dl}  {v_n} 
\bigg\|\sum_{\G(n_1)} \jb{m_1}^{s - 1+\dl} v_{m_1}\cj{ v_{m_2}} v_{m_3}\bigg\|_{\l^\infty_{n_1}} \notag\\
& \les
\| v\|_{H^{s-1+\dl}}
\| v\|_{H^{s-\frac56 +\dl }}
\| v\|_{H^{\frac 16}}^2
\notag\\
& \hphantom{XX}
\times
 \Bigg\{   \sum_{n \in \Z} 
\bigg(\sum_{\mu \ne 0} 
   \frac{1}{|\mu|^{1+2\dl}}
      \sum_{\G( n, \mu)}1   \bigg)
\bigg(\sum_{\G(n)}
\frac{v_{n_2}^2 v_{n_3} ^2 }{|(n - n_1)(n-n_3)|^{1-2\dl}\jb{n}^{4\dl}}\bigg)\Bigg\}^{\frac 12}\notag\\
& \les
\| v\|_{H^{s-1+ \dl}}
\| v\|_{H^{s-\frac56 +\dl }}
\| v\|_{H^{\frac 16}}^2
\notag\\
& \hphantom{XX}
\times
 \Bigg\{   \sum_{n \in \Z} 
\bigg(\sum_{\mu \ne 0} 
   \frac{1}{|\mu|^{1+2\dl}}
|\mu|^\dl  \bigg)
\bigg(\sum_{\G(n)}
\frac{v_{n_2}^2 v_{n_3} ^2 }{|(n - n_1)(n-n_3)|^{1-2\dl}\jb{n}^{4\dl}}\bigg)\Bigg\}^{\frac 12}\notag\\
& \les
\| v\|_{H^{s-1+ \dl}}
\| v\|_{H^{s-\frac56 +\dl}}
\| v\|_{H^{\frac 16}}^2
 \Bigg\{   \sum_{n_2, n_3 \in \Z}
 v_{n_2}^2 v_{n_3} ^2
 \sum_{n \ne n_3}
\frac{1 }{|n-n_3|^{1-2\dl}\jb{n}^{4\dl}}\Bigg\}^{\frac 12}\notag\\
& \les 
\|v\|_{L^2}^2
\| v\|_{H^{s-1 + \dl}}
\| v\|_{H^{s-\frac56 +\dl}}
\| v\|_{H^{\frac 16}}^2
 \les 
\| v \|_{L^2}^{4+\theta} \|v\|_{H^{s - \frac 12 - \eps}}^{2-\theta}
\label{E5}
\end{align}

\noi
for sufficiently small $\dl, \eps >0$ and some $\theta = \theta (s,\dl,  \eps) > 0$.

Suppose that $|n_1|\ll |n|$.
In this case, we have $\max(|n_2|, |n_3|) \ges |n|$.
Without loss of generality, assume that $|n_2 |\ges |n|$.
Proceeding as in \eqref{E3} and \eqref{E4} with \eqref{Ediv} and \eqref{E2a}, we have 
\begin{align}
|\NN_1(v)|
& \les
   \sum_{n \in \Z} \sum_{\mu \ne 0} 
   \sum_{\G(n, \mu)}
\frac{1}{|\mu|\jb{n}^{2\dl}}
\jb{n_2}^{s-1+\dl}{v_{n_2}}v_{n_3} 
\jb{n}^{s - 1+\dl}  {v_n} 
\bigg\|\sum_{\G(n_1)}  v_{m_1}\cj{ v_{m_2}} v_{m_3}\bigg\|_{\l^\infty_{n_1}} \notag\\
& \les \|v\|_{H^{s-1+\dl}}\|v\|_{H^\frac{1}{6}}^3
 \Bigg\{   \sum_{n \in \Z} 
\bigg(\sum_{\mu \ne 0} 
   \frac{1}{|\mu|^{1+2\dl}}
|\mu|^\dl  \bigg)
\bigg(\sum_{\G(n)}
\frac{\jb{n_2}^{2(s-1+\dl)}v_{n_2}^2 v_{n_3} ^2 }{|(n - n_1)(n-n_3)|^{1-2\dl}\jb{n}^{4\dl}}\bigg)\Bigg\}^{\frac 12}\notag\\
& \les \|v\|_{H^{s-1+\dl}}\|v\|_{H^\frac{1}{6}}^3
 \Bigg\{   \sum_{n_2, n_3 \in \Z}
\jb{n_2}^{2(s-1+\dl)} v_{n_2}^2 v_{n_3} ^2
 \sum_{n \ne n_3}
\frac{1 }{|n-n_3|^{1-2\dl}\jb{n}^{4\dl}}\Bigg\}^{\frac 12}\notag\\
& \les
\| v\|_{L^2}
\| v\|_{H^{s-1+\dl}}^2
\| v\|_{H^{\frac 16}}^3
 \les \| v\|_{L^2}^{4+\theta}
\| v\|_{H^{s-\frac 12 -\eps}}^{2-\theta}
\label{E6}
\end{align}

\noi
for sufficiently small $\dl, \eps >0$ and some $\theta = \theta (s, \dl, \eps) > 0$.

Noting that $\mu = (n-n_1) (n - n_3) = (n_2 - n_1) (n_2 - n_3)$
under $n = n_1 - n_2 + n_3$, 
we can estimate
$\NN_2(v)$ and $\NN_3(v)$  in a similar
manner.

Next,  we estimate $\RR_1(v)$.
The remaining terms $\RR_2(v)$ and $\RR_3(v)$ can be estimated in a similar manner.
Without loss of generality, suppose that $|n_3|\ges |n|$.
From Lemma \ref{LEM:phase} and the divisor counting argument \eqref{Ediv}, we have
\begin{align}
|\RR_1(v)|
& \les
 \|v\|_{H^{s-1+\dl}}
 \Bigg\{   \sum_{n \in \Z} 
\bigg(\sum_{\mu \ne 0} 
   \frac{1}{\mu^{1+2\dl}} |\mu|^{\dl}\bigg)
\bigg(\sum_{\G(n)}
\frac{
v_{n_1}^6
v_{n_2}^2 \jb{n_3}^{2(s-1+\dl)} v_{n_3} ^2 }{|(n - n_1)(n-n_3)|^{1-2\dl}\jb{n}^{4\dl}}\bigg)\Bigg\}^{\frac 12}  \notag\\
& \les
\| v \|_{L^2}^{4} \|v\|_{H^{s - 1+\dl}}^{2}
\les \| v \|_{L^2}^{4+ \theta} \|v\|_{H^{s - \frac 12 - \eps}}^{2-\theta}
\label{E7}
\end{align}

\noi
\noi
for sufficiently small $\dl, \eps >0$ and some $\theta = \theta (s, \dl, \eps) > 0$.

Therefore, \eqref{E1a} follows from \eqref{E4}, 
 \eqref{E5},  \eqref{E6}, and \eqref{E7}.
This completes the proof of Proposition \ref{PROP:energy}.
\end{proof}

%
%
%
%
%

\subsection{Weighted Gaussian measures}\label{SUBSEC:meas}

Our main goal in this subsection is to define 
weighted Gaussian  measures 
adapted to the modified energy $E_t(\P_{\leq N} v)$
and $E_t(v)$ defined in the previous section.
Given $N \in \N$ and $r>0$, 
define $F_{N, r, t}(v)$ and $F_{r, t}(v)$ by 
\begin{align} F_{N, r, t}(v) = \ind_{\{ \| v\|_{L^2 } \leq r\}} e^{-\frac 12 R_t(\P_{\leq N} v)}
\quad \text{and}\quad  
F_{r, t}(v) = \ind_{\{ \| v\|_{L^2 } \leq r\}} e^{-\frac 12 R_t( v)},
\label{Gibbs0}
\end{align}

\noi
where $R_t$ is defined in \eqref{Hamil2}.
Then, we would like to construct 
probability measures 
$\rho_{s, N, r, t}$
and $\rho_{s, r, t}$ of the form:\footnote{
The normalizing constants
$Z_{s, N, r}$ and $Z_{s, r}$
a priori depend on $t \in \R$.
It is, however, easy to see that 
they are indeed independent of $t \in \R$
by (i) noticing that $R_t(v)$ defined in 
\eqref{Hamil2}
is autonomous in terms of $\wt u (t)= S(t) v(t)$
and 
(ii)  the invariance of $\mu_s$ under $S(t)$ (Lemma \ref{LEM:gauss2}).}
\begin{align}
d \rho_{s, N, r, t}
&  = \text{``}Z_{s, N, r}^{-1} \ind_{\{ \| v\|_{L^2 } \leq r\}} e^{- \frac 12 E_t(\P_{\leq N} v)} dv\text{''} \notag\\
& =  
Z_{s, N, r}^{-1} F_{N, r, t} d\mu_s
\label{Gibbs1}
\end{align}
	
\noi
and 
\begin{align}
d \rho_{s, r, t}
&  = \text{``}Z_{s, r}^{-1} \ind_{\{ \| v\|_{L^2 } \leq r\}} e^{- \frac 12 E_t( v)} dv\text{''} \notag\\
& =  
  Z_{s, r}^{-1} F_{r, t} d\mu_s.
\label{Gibbs1a}
\end{align}

\noi
The following proposition shows that they are indeed
well defined probability measures on $H^{s-\frac 12 -\eps}(\T)$, $\eps > 0$.

\begin{proposition}\label{PROP:Gibbs}
Let $s > \frac 12$ and $r > 0$.
Then, $F_{N, r, t}(v) \in L^p(\mu_s)$ for any $p \geq 1$
with a uniform bound in $N \in \N$ and $t \in \R$, depending only on $p\geq 1$ and $r > 0$.
Moreover, for any finite $p \geq 1$, 
$F_{N, r, t}(v)$ converges to  $F_{r, t}(v)$ in $L^p (\mu_s)$,
uniformly in $t\in \R$,  as $N \to \infty$.

\end{proposition}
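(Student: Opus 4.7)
The plan is to combine a deterministic pointwise bound on $R_t(\P_{\leq N}v)$ that exploits the smoothing from the factor $1/\phi(\bar n)$ (Lemma~\ref{LEM:phase}) with probabilistic tail bounds coming from the Gaussian structure of $\mu_s$. The $L^2$-cutoff built into $F_{N,r,t}$ will be essential for absorbing the low-regularity part of the estimate.

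First, I would establish a deterministic estimate of the form
\begin{align*}
|R_t(\P_{\leq N}v)| \leq C\,\|v\|_{L^2}^{2+\theta}\,\|v\|_{H^{s-\frac12-\eps}}^{2-\theta}
\end{align*}
for some small $\theta,\eps>0$, uniformly in $N\in\N$ and $t\in\R$. This is obtained by essentially the same reasoning used for $\NN_1(v)$ in Proposition~\ref{PROP:energy}: using Lemma~\ref{LEM:phase} one has $\jb{n}^{2s}/|\phi(\bar n)|\les \jb{n}^{2s-2}/|(n-n_1)(n-n_3)|$; one then organises the sum by the divisor parameter $\mu=(n-n_1)(n-n_3)$, applies Cauchy--Schwarz in $n$, and invokes the divisor bound \eqref{divisor}. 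The uniformity in $t$ is automatic since $|e^{-i\phi(\bar n)t}|=1$. A parallel estimate for the difference $R_t(v)-R_t(\P_{\leq N}v)$, where summation is restricted to the set where at least one $n_j$ satisfies $|n_j|>N$, will gain an extra factor $\jb{N}^{-\delta}$ for some small $\delta>0$; this will be the input for the convergence step.

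Second, I would use this estimate on $\{\|v\|_{L^2}\leq r\}$ to obtain $|R_t(\P_{\leq N}v)|\leq C r^{2+\theta}\|v\|_{H^{s-\frac12-\eps}}^{2-\theta}$, with a strictly sub-quadratic exponent $2-\theta<2$. Since $\mu_s$ is supported on $H^{s-\frac12-\eps}$ with Gaussian tails (Fernique), and the exponent is sub-quadratic, the exponential moment $\int \exp(\lambda\|v\|_{H^{s-\frac12-\eps}}^{2-\theta})\,d\mu_s$ is finite for every $\lambda>0$. This immediately yields $\|F_{N,r,t}\|_{L^p(\mu_s)}\leq C(p,r)$ uniformly in $N,t$. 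For the convergence statement, I would write
\begin{align*}
F_{N,r,t}-F_{r,t}=\ind_{\{\|v\|_{L^2}\leq r\}}\bigl(e^{-\frac12 R_t(\P_{\leq N}v)}-e^{-\frac12 R_t(v)}\bigr),
\end{align*}
use the inequality $|e^a-e^b|\leq(e^{|a|}+e^{|b|})|a-b|$ together with Cauchy--Schwarz in $L^{2p}(\mu_s)$, and reduce to $L^{2p}$-convergence of $R_t(\P_{\leq N}v)\to R_t(v)$ uniformly in $t$, which is controlled by the difference estimate together with the Gaussian moments of $\|v\|_{H^{s-\frac12-\eps}}$. The rate is $\jb{N}^{-\delta}$, uniform in $t$, as required.

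The main obstacle I expect is achieving in Step 1 a truly sub-quadratic exponent on $\|v\|_{H^{s-\frac12-\eps}}$ with the remaining power of $\|v\|_{L^2}$ arranged so that the $L^2$-cutoff can absorb it. The quartic nature of $R_t$ would naïvely produce an exponent of $4$, so one has to deploy the smoothing $1/\phi(\bar n)$ and the divisor bound in a balanced manner; a crude estimate (using all four factors in $H^s$) only gives integrability for $s$ close to $\frac12$. One should also verify that the constants in the divisor-counting step are independent of $t$ (which they are, again by $|e^{-i\phi t}|=1$) and of $N$ (which follows from replacing sums over $\{|n_j|\leq N\}$ by sums over all of $\Z^3$ with the triangle inequality).
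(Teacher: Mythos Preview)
Your proposal is correct and will work, but it takes a route that differs from the paper's in two notable ways.

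First, for the deterministic bound on $R_t$: the paper actually splits into the regimes $s>1$ and $\tfrac12<s\le 1$. In the low-regularity regime it observes directly that $|R_t(v)|\lesssim \|v\|_{H^{\frac{s}{2}-\frac12}}^4\le \|v\|_{L^2}^4\le r^4$ on the cutoff set, so $F_{N,r,t}$ is \emph{pointwise} bounded by $e^{Cr^4}$ and the $L^p$-bound is trivial---no probabilistic input is needed at all. For $s>1$ the paper obtains the quadratic bound $|R_t(v)|\lesssim \|v\|_{L^2}^2\|v\|_{H^{s-1}}^2$ and then controls the tail $\mu_s(|R_t|\ge K,\ \|v\|_{L^2}\le r)$ by a hands-on dyadic decomposition of $\|v\|_{H^{s-1}}$ together with a Gaussian tail lemma (Lemma~\ref{LEM:polar}). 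Your unified sub-quadratic estimate $|R_t|\lesssim\|v\|_{L^2}^{2+\theta}\|v\|_{H^{s-\frac12-\eps}}^{2-\theta}$ is obtainable (it follows from the paper's case-by-case bounds by interpolation), and your appeal to Fernique to handle the exponential moment is a clean shortcut that replaces the paper's explicit tail computation. One minor inaccuracy: the relevant model is not $\NN_1(v)$ from Proposition~\ref{PROP:energy} (which is sextic) but the simpler quartic estimate on $R_t$ carried out in the paper's own proof of Proposition~\ref{PROP:Gibbs}---the divisor-counting mechanism is the same, but $R_t$ is structurally easier.

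Second, for the convergence $F_{N,r,t}\to F_{r,t}$: the paper argues via almost-sure convergence of $R_t(\P_{\le N}v)\to R_t(v)$, upgrades this to convergence in measure by Egoroff, and concludes by combining with the uniform $L^{2p}$-bound. Your mean-value-plus-Cauchy--Schwarz approach is a perfectly valid alternative and has the advantage of yielding an explicit rate; the paper's soft argument does not. Both methods deliver the required uniformity in $t$ for the same reason you identify, namely $|e^{-i\phi(\bar n)t}|=1$.
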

	
\noi	
In the following, we restrict our attention to $s > \frac 12$.
Hence, we view $\rho_{s, N, r, t}$
and $\rho_{s, r, t}$ as probability measures on $L^2(\T)$.

Let $\mu_{s, r}$ be as in \eqref{Gibbs1b}. 
Then, it follows from Proposition \ref{PROP:Gibbs}
that $\rho_{s, r, t}$ and $\mu_{s, r}$
are mutually absolutely continuous.
Moreover, we have the following `uniform convergence' property of $\rho_{s, N, r, t}$
to $\rho_{s, r, t}$.

\begin{corollary}\label{COR:Gibbs}
Given $s > \frac 12$ and  $r>0$,  let $\rho_{s, N, r, t}$
and $\rho_{s, r, t}$ be as in \eqref{Gibbs1}
and \eqref{Gibbs1a}.
Then, for any  $\g > 0$, there exists $N_0 \in \N$ such that 
\[ |  \rho_{s, N, r, t}(A) - \rho_{s, r, t}(A)| < \g \]

\noi
for any $N \geq N_0$
and  any measurable set 
$A \subset L^2(\T)$,
uniformly in $t\in \R$.
\end{corollary}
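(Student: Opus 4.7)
The plan is to deduce the uniform total variation convergence from the uniform $L^p$-convergence of densities already established in Proposition~\ref{PROP:Gibbs}. Recall that, by the definitions \eqref{Gibbs1}--\eqref{Gibbs1a},
\[ \rho_{s,N,r,t}(A) = Z_{s,N,r}^{-1}\int_A F_{N,r,t}(v)\,d\mu_s(v), \qquad \rho_{s,r,t}(A) = Z_{s,r}^{-1}\int_A F_{r,t}(v)\,d\mu_s(v), \]
and the footnote after \eqref{Gibbs1a} guarantees that the normalizing constants $Z_{s,N,r}$ and $Z_{s,r}$ are independent of $t$.

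First, I would split the difference via the triangle inequality,
\[
|\rho_{s,N,r,t}(A)-\rho_{s,r,t}(A)| \leq \frac{1}{Z_{s,N,r}}\int_A |F_{N,r,t}-F_{r,t}|\,d\mu_s + \left|\frac{1}{Z_{s,N,r}}-\frac{1}{Z_{s,r}}\right|\int_A F_{r,t}\,d\mu_s,
\]
so that the second integral equals $Z_{s,r}$ and can be dropped. The first term is controlled uniformly in $A$ and $t$ by $Z_{s,N,r}^{-1}\|F_{N,r,t}-F_{r,t}\|_{L^1(\mu_s)}$, which Proposition~\ref{PROP:Gibbs} (with $p=1$) makes arbitrarily small as $N\to\infty$, uniformly in $t\in\R$.

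Next, I need to control $Z_{s,N,r}^{-1}$. Since $Z_{s,N,r}-Z_{s,r} = \int(F_{N,r,t}-F_{r,t})\,d\mu_s$, the same $L^1$-bound gives $Z_{s,N,r}\to Z_{s,r}$ uniformly in $t$; but both constants are independent of $t$, so this is simply a numerical convergence. I would then verify that $Z_{s,r}>0$ by noting that $F_{r,t}(v)>0$ on the set $\{\|v\|_{L^2}\leq r\}$, which has positive $\mu_s$-measure (as $\|v\|_{L^2}$ is nondegenerate under $\mu_s$ for $s>\frac12$), and that $R_t(v)$ is $\mu_s$-almost surely finite by the same multilinear/divisor estimates underlying Proposition~\ref{PROP:Gibbs}. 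Combining these facts gives a uniform lower bound $Z_{s,N,r}\geq \tfrac12 Z_{s,r}>0$ for all $N$ sufficiently large, and the difference $|Z_{s,N,r}^{-1}-Z_{s,r}^{-1}|=|Z_{s,r}-Z_{s,N,r}|/(Z_{s,N,r}Z_{s,r})$ tends to zero, again uniformly in $t$.

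Putting the two estimates together yields
\[
|\rho_{s,N,r,t}(A)-\rho_{s,r,t}(A)| \leq \frac{2}{Z_{s,r}}\sup_{t\in\R}\|F_{N,r,t}-F_{r,t}\|_{L^1(\mu_s)},
\]
for $N$ large, and the right-hand side can be made smaller than $\gamma$ by choosing $N_0$ large enough, uniformly in the measurable set $A$ and in $t$. The result is essentially a direct consequence of Proposition~\ref{PROP:Gibbs}; no step presents a serious obstacle, the only subtlety being the $t$-independence and strict positivity of the partition functions, both of which are already noted in the footnote and follow from the structure of $R_t$ under $S(t)$ together with Lemma~\ref{LEM:gauss2}.
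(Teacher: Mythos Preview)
Your proposal is correct and is precisely the standard argument the paper leaves implicit: the paper states the corollary without proof, as a direct consequence of Proposition~\ref{PROP:Gibbs}, and you have filled in the routine passage from uniform $L^1(\mu_s)$-convergence of the densities $F_{N,r,t}\to F_{r,t}$ to uniform total-variation convergence of the associated probability measures. Two cosmetic points: the integral $\int_A F_{r,t}\,d\mu_s$ is only \emph{bounded} by $Z_{s,r}$ (not equal to it), and combining the two terms gives a constant like $4/Z_{s,r}$ rather than $2/Z_{s,r}$ in your final display; neither affects the argument.
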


The proof of Proposition \ref{PROP:Gibbs} follows
closely 
Bourgain's argument in constructing Gibbs measures \cite{BO94}.
We first recall the following basic tail estimate.
See \cite[Lemma 4.2]{OQV} for a short proof.

\begin{lemma} \label{LEM:polar}
Let $\{ g_n\}_{n \in \N}$ be independent standard complex-valued Gaussian random variables.
Then, there exist constant $c, C>0$ such that,  
 for any $M\geq 1$, we have the following tail estimate:
\begin{equation*} 
P\bigg[ \Big( \sum_{n=1}^M |g_n|^2\Big)^\frac{1}{2} \geq K \bigg]
\leq  e^{-cK^2}, \qquad K   \geq C M^\frac{1}{2} .
\end{equation*}
\end{lemma}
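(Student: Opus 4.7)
The plan is to apply the Chernoff bound (exponential Markov inequality) together with the explicit moment generating function of a chi-squared variable. First, I will write each complex Gaussian as $g_n = a_n + i b_n$ with $a_n, b_n$ independent standard real Gaussians; then $|g_n|^2 = a_n^2 + b_n^2$ is chi-squared with two degrees of freedom, and a direct one-dimensional Gaussian integral gives
\[
\E\bigl[e^{\lambda |g_n|^2}\bigr] = (1-2\lambda)^{-1}
\]
for every $\lambda \in (0, 1/2)$. By independence of the $g_n$'s, this tensorises to
\[
\E\Bigl[\exp\Bigl(\lambda \sum_{n=1}^M |g_n|^2\Bigr)\Bigr] = (1-2\lambda)^{-M}.
\]

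Second, I will apply Markov's inequality to the positive random variable $\exp\bigl(\lambda \sum_{n=1}^M |g_n|^2\bigr)$ at the threshold $e^{\lambda K^2}$, obtaining
\[
P\Bigl[\sum_{n=1}^M |g_n|^2 \geq K^2\Bigr] \leq e^{-\lambda K^2}(1-2\lambda)^{-M}.
\]
Rather than optimising $\lambda$, I will simply fix the convenient value $\lambda = 1/4$, which reduces the right-hand side to $e^{-K^2/4}\cdot 2^M$.

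Finally, I will absorb the deterministic factor $2^M = e^{M \log 2}$ into the Gaussian decay by means of the hypothesis $K \geq C M^{1/2}$. Indeed, if $C$ is chosen so that $C^2 \geq 8 \log 2$, then $K^2 \geq 8 M \log 2$, so that $2^M \leq e^{K^2/8}$, and the bound collapses to $e^{-K^2/8}$, which gives the claim with $c = 1/8$. I do not anticipate any real obstacle; the only minor care concerns the order of the constants, namely that $C$ is fixed first (large enough to dominate $\log 2$) and then $c$ is determined by the residual exponent, independently of $M$. The same strategy would in fact give any $c < 1/4$ at the cost of enlarging $C$, but the weaker statement above is enough for the application in Section~\ref{SEC:quasi}.
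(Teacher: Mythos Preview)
Your proof is correct. The paper itself does not prove this lemma; it simply cites \cite[Lemma~4.2]{OQV} for a short argument, so there is no ``paper's own proof'' to compare against. Your Chernoff-bound computation is exactly the standard route (and is essentially what appears in the cited reference): the moment generating function of $|g_n|^2$ is computed correctly, the choice $\lambda = 1/4$ is legitimate since it lies in the admissible range $(0,1/2)$, and the absorption of $2^M$ via the hypothesis $K \geq C M^{1/2}$ with $C^2 \geq 8\log 2$ is clean. Nothing is missing.
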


\begin{proof}[Proof of Proposition \ref{PROP:Gibbs}]
Fix $r> 0$.
We first prove 
\begin{align}
\| F_{N, r, t} \|_{L^p(\mu_s)}, \, \| F_{r, t}\|_{L^p(\mu_s)} \leq C_{p, r} < \infty
\label{Gibbs2}
\end{align}
	
\noi
for all $N \in \N$
and $t \in \R$.
From the distributional characterization of the $L^p$-norm
and \eqref{Gibbs0}, we have
\begin{align*}
 \| F_{r, t} \|_{L^p(\mu_s)}^p
& = p \int_0^\infty \ld^{p-1} \mu_s(|F_{ r, t}| > \ld) d\ld\\
& \leq  C+ p \int_e^\infty \ld^{p-1} 
\mu_s \big(|R_t(v)| \geq \log \ld, \, \|v\|_{L^2} \leq r\big) 
d\ld.
\end{align*}

\noi
In the following, 
we estimate
$\mu_s \big(|R_t(v)| \geq K, \, \|v\|_{L^2} \leq r\big) $
for $K \geq 1$,
using the dyadic pigeon hole principle and Lemma \ref{LEM:polar}.
Let us divide the argument into two cases: $ s> 1$ and $\frac 12 < s \leq 1$.
Note that, while $R_t$ depends on $t \in \R$, 
all the estimates below hold uniformly in $t\in \R$.

First, suppose that $s > 1$.
Then, from Lemma \ref{LEM:phase} and the divisor counting argument
as in the proof of Proposition \ref{PROP:energy}
(see \eqref{E7}), 
we have
\begin{align}
|R_t( v)| \leq C_0  \|  v\|_{L^2}^2
 \| v\|_{H^{s-1}}^2
\leq  C_0  r^2
 \| v\|_{H^{s-1}}^2.
\label{Gibbs3}
\end{align}

\noi
under $\| v\|_{L^2 } \leq r$.
Similarly, we have 
\begin{align}
|R_t(\P_{\leq M_0} v)| \leq C_0 M_0^{2(s-1)} \| \P_{\leq M_0} v\|_{L^2}^4
\leq C_0 M_0^{2(s-1)}r^4. 
\label{Gibbs4}
\end{align}
	
\noi
Given $K \geq 1$, choose  $M_0 > 0$ such that 
\begin{align}
\tfrac12 K = C_0 M_0^{2(s-1)}r^4. 
\label{Gibbs5}
\end{align}

\noi
For $j \in \N$, let $M_j = 2^j M_0$
and 
$\s_j = C_\eps 2^{-\eps j}$ for some small $\eps > 0$
such that  $\sum_{j \in \N } \s_j = \frac 12$.
Then, 
from \eqref{Gibbs3} and \eqref{Gibbs4}, 
we have
\begin{align*}
\mu_s \big(|R_t(v)| \geq K, \, \|v\|_{L^2} \leq r\big) 
& \leq \mu_s \big(\|v\|_{H^{s-1}}^2  \geq C_0^{-1}r^{-2} K\big) \notag \\
& \leq \sum_{j = 1}^\infty 
\mu_s \big(\|\P_{M_j} v\|_{H^{s-1}}^2  \geq \s_j C_0^{-1}r^{-2} K\big) \notag\\
& \les \sum_{j = 1}^\infty 
P\bigg[ \Big( \sum_{|n|\sim M_j }|g_n|^2\Big)^\frac{1}{2}
  \ges L_j \bigg], 
\end{align*}
	
\noi
where $L_j := (\s_j r^{-2} K)^\frac{1}{2}   M_j
\ges M_0^{\frac 12 \eps} M_j^{1-\frac 12 \eps} \gg M_j^{\frac 12}$.
Here, we used that $r^{-2} K \sim M_0^{s-1} r^2 \ges1$ in view of \eqref{Gibbs5}.
Then, applying Lemma \ref{LEM:polar} with \eqref{Gibbs5}, we obtain 
\begin{align*}
\mu_s \big(|R_t(v)| \geq K, \, \|v\|_{L^2} \leq r\big) 
&  \les \sum_{j = 1}^\infty 
e^{-c L_j^2}
=  \sum_{j = 1}^\infty 
e^{-c_r 2^{(2 - \eps)j}  M_0^{2} K } \notag\\
& =  \sum_{j = 1}^\infty 
e^{-c'_r 2^{(2 - \eps)j}  K^{1+\frac{1}{s-1} }}
\les 
e^{-c''_r   K^{1+\frac{1}{s-1} }}.
\end{align*}

\noi
This proves \eqref{Gibbs2} for $F_{r, t}$ when $s > 1$.
A similar argument holds for $F_{N, r, t}$
with a uniform bound in $N \in \N$.

Next, suppose that  $\frac 12<  s \leq 1$. 
Proceeding with 
Lemma \ref{LEM:phase}
as before, 
we have
\begin{align}
|R_t( v)| \les 
 \| v\|_{H^{\frac s2 -\frac 12}}^4 \leq r^4
\label{Gibbs5a}
\end{align}

\noi
under $\| v\|_{L^2 } \leq r$.
Hence, 
\eqref{Gibbs2} trivially follows in this case.

It remains to show that $F_{N, r, t}$ converges to $F_{r, t}$ in $L^p(\mu_s)$.
It  follows
from a small modification of \eqref{Gibbs3}
and \eqref{Gibbs5a} that
$R_t(\P_{\leq N} v)$
converges to 
$R_t( v)$ 
almost surely with respect to $\mu_s$, 
uniformly in $t\in \R$.
Indeed, when $s > 1$, we have
\begin{align*}
|R_t( v) - R_t(\P_{\leq N} v)| 
 \les \| \P_{> N} v\|_{L^2}
  \|  v\|_{L^2}
 \| v\|_{H^{s-1}}^2
+ 
 \|  v\|_{L^2}^2
\| \P_{> N} v\|_{H^{s-1}}
 \| v\|_{H^{s-1}} \longrightarrow 0, 
\end{align*}

\noi
while we have
\begin{align*}
|R_t( v) - R_t(\P_{\leq N} v)| \les 
 \| \P_{> N} v\|_{H^{\frac s2 -\frac 12}}
 \| v\|_{H^{\frac s2 -\frac 12}}^3
\longrightarrow 0, 
\end{align*}

\noi
when $ \frac 12  < s\leq 1$.
Hence, $F_{N, r, t}$ converges to $F_{r, t}$
almost surely with respect to $\mu_s$. 
As a consequence of Egoroff's theorem, 
we see that  $F_{N, r, t}$ converges to $F_{r, t}$ almost uniformly and hence in measure
(uniformly in $t \in \R$).
Namely, given $\eps > 0$, 
if we let 
\[ A_{N, \eps, t} = \{ v \in L^2(\T):\, |F_{N, r, t}(v) - F_{r, t}(v)|\leq \tfrac 12 \eps\}, \]

\noi
we have 
$\mu_s(A_{N, \eps, t}^c) \to 0$, 
uniformly in $t \in \R$, as $N \to \infty$.
Then, by Cauchy-Schwarz inequality and \eqref{Gibbs2}, we have  
\begin{align*}
\| F_{N, r, t} - F_{r, t}\|_{L^p(\mu_s)}
& \leq \| (F_{N, r, t} - F_{r, t})\ind_{A_{N, \eps, t}}\|_{L^p(\mu_s)} 
+\| (F_{N, r, t} - F_{r, t})\ind_{A_{N, \eps, t}^c}\|_{L^p(\mu_s)} \\
& \leq \tfrac 12 \eps 
+\| F_{N, r, t} - F_{r, t}\|_{L^{2p}(\mu_s)}
\|\ind_{A_{N, \eps, t}^c}\|_{L^{2p}(\mu_s)} \\
& \leq \tfrac{1}{2}\eps 
+ 2C_{2p, r} 
\mu_s(A_{N, \eps, t}^c)^\frac{1}{2p} \\
& \leq \eps
\end{align*}

\noi
for all sufficiently large $N \in \N$,
uniformly in $t \in \R$.
Therefore, $F_{N, r, t}$ converges to $F_{r, t}$ in $L^p(\mu_s)$ for any $p \geq 1$.
\end{proof}

We conclude this subsection by stating a large deviation estimate
on the quantity appearing in 
the energy estimate (Proposition \ref{PROP:energy}).

\begin{lemma}\label{LEM:LD}
Let $\eps > 0$ and $r>0$.
Then, there exists $C = C(\eps, r) > 0$ such that 
\begin{align*}
\big\| \| v \|_{H^{s-\frac 12 - \eps}} \big\|_{L^p(\rho_{s, N, r, t})}\leq C p^\frac{1}{2}
\end{align*}

\noi
for any $p \geq 2$, any $t \in \R$, and all sufficiently large $N \in \N$.
\end{lemma}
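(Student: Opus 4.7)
The plan is to reduce the bound to a Gaussian moment estimate against $\mu_s$ via Cauchy--Schwarz, using that $d\rho_{s, N, r, t} = Z_{s, N, r}^{-1} F_{N, r, t}\, d\mu_s$. First I would write
\[
\int \|v\|_{H^{s-\frac12-\eps}}^p\, d\rho_{s, N, r, t} \le Z_{s, N, r}^{-1}\, \bigl\|\|v\|_{H^{s-\frac12-\eps}}\bigr\|_{L^{2p}(\mu_s)}^p\, \|F_{N, r, t}\|_{L^2(\mu_s)}.
\]
By Proposition~\ref{PROP:Gibbs}, $\|F_{N,r,t}\|_{L^2(\mu_s)} \le C_r$ uniformly in $N\in\N$ and $t\in\R$, so it remains to control the other two factors.

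For the Gaussian factor, I would observe that under the Gaussian map \eqref{gauss1},
\[
\|v\|_{H^{s-\frac12-\eps}}^2 = \sum_{n\in\Z} \jb{n}^{-1-2\eps}\, |g_n|^2,
\]
which is an element of the second Wiener chaos with summable coefficients $\jb{n}^{-1-2\eps}$ (since $\eps > 0$). By Wiener chaos / Nelson hypercontractivity, the $L^p(\mu_s)$ norm of this quantity grows at most linearly in $p$, and hence
\[
\bigl\|\|v\|_{H^{s-\frac12-\eps}}\bigr\|_{L^{2p}(\mu_s)} \le C\sqrt{p},
\]
as desired. (An equivalent route would be the Gaussian concentration $\mu_s(\|v\|_{H^{s-\frac12-\eps}} > K) \le e^{-cK^2}$ via Lemma~\ref{LEM:polar} applied dyadically, then integrating the tail.)

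The remaining (and main) task is a uniform lower bound $\inf_{N,t} Z_{s, N, r} > 0$. I would argue
\[
Z_{s, N, r} \ge e^{-M/2}\, \mu_s\bigl(\{\|v\|_{L^2} \le r\} \cap \{|R_t(\P_{\le N} v)| \le M\}\bigr),
\]
then combine two standard facts: (i) $\mu_s(\|v\|_{L^2}\le r) > 0$ for any $r > 0$ by a direct Gaussian computation on $\sum_n \jb{n}^{-2s} |g_n|^2$; (ii) from the proof of Proposition~\ref{PROP:Gibbs}, $R_t(\P_{\le N} v)$ is bounded in $L^p(\mu_s)$ uniformly in $N$ and $t$, so by Chebyshev one can choose $M$ large enough (depending only on $r$) that $\mu_s\big(|R_t(\P_{\le N} v)| > M,\, \|v\|_{L^2}\le r\big) \le \tfrac12 \mu_s(\|v\|_{L^2}\le r)$ uniformly in $N, t$. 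This yields $\inf_{N,t} Z_{s, N, r} \ge c(r) > 0$.

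Taking $p$-th roots of the Cauchy--Schwarz bound and combining the three estimates produces the claimed $C p^{1/2}$ bound. I expect step (ii) of the lower bound on $Z_{s,N,r}$ to be the only subtle point, but the required uniform $L^p$ control on $R_t(\P_{\le N} v)$ is exactly what the proof of Proposition~\ref{PROP:Gibbs} establishes, so no new analytic input is needed.
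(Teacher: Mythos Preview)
Your proposal is correct and follows essentially the same route as the paper: pass from $\rho_{s,N,r,t}$ to $\mu_s$ via the density $F_{N,r,t}$ and H\"older/Cauchy--Schwarz, invoke Proposition~\ref{PROP:Gibbs} to control the $F$-factor uniformly in $N$ and $t$, and then use Gaussian hypercontractivity for the moment $\big\|\|v\|_{H^{s-\frac12-\eps}}\big\|_{L^{2p}(\mu_s)}\les p^{1/2}$. One simplification: your careful argument for $\inf_{N,t} Z_{s,N,r}>0$ is not really needed as a separate step, since the $L^1(\mu_s)$-convergence $F_{N,r,t}\to F_{r,t}$ in Proposition~\ref{PROP:Gibbs} already gives $Z_{s,N,r}\to Z_{s,r}>0$ uniformly in $t$, which is precisely why the lemma is stated only for ``sufficiently large $N$''.
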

	
\begin{proof}

By Proposition \ref{PROP:Gibbs}, we have 
\begin{align*}
\big\| \| v   \|_{H^{s-\frac 12 - \eps}} \big\|_{L^p(\rho_{s, N, r, t})}
&  \leq \| F_{N, r}\|_{L^{2p}(\mu_s)}
\big\| \| v \|_{H^{s-\frac 12 - \eps}} \big\|_{L^{2p}(\mu_s)}\notag \\
& \les \bigg\| \Big\|\sum_{n \in \Z} \frac{g_n}{\jb{n}^{\frac 12 + \eps}}e^{inx} \Big\|_{L^2_x} \bigg\|_{L^{2p}(\O)}\notag\\
&  \les p^\frac{1}{2}
   \bigg\| \Big\| \sum_{n \in \Z} \frac{g_n}{\jb{n}^{\frac 12 + \eps}}e^{inx}  \Big\|_{L^{2}(\O)}\bigg\|_{L^2_x}
\les p^\frac {1}{2}.
\end{align*}

\noi
Here, the second to the last inequality follows
from the hypercontractvity estimate due to Nelson \cite[Theorem 2]{Nelson}.
See also  \cite[Lemma 3.1]{BT1}.
\end{proof}

\subsection{A change-of-variable formula}

In this subsection, we establish an important change-of-variable formula
(Proposition \ref{PROP:meas1}).
It is 
strongly motivated by the work \cite{TzV1, TzV2}.
We  closely follow the argument presented in \cite{TzBBM}.

Given $N \in \N$, let $d L_N = \prod_{|n|\leq N} d\ft u_n $ denote the Lebesgue
measure on $\C^{2N+1}$.
Then, 
from \eqref{Gibbs0}
and \eqref{Gibbs1} with \eqref{G4}, we have 
\begin{align*}
d \rho_{s, N, r, t} 
& = Z_{s, N, r}^{-1} 
\ind_{\{ \| v\|_{L^2 } \leq r\}} e^{-\frac 12 R_t(\P_{\leq N} v)}
 d\mu_s\notag\\
& =\ft Z_{s, N, r}^{-1} 
\ind_{\{ \| v\|_{L^2 } \leq r\}} e^{-\frac 12 E_t(\P_{\leq N} v)}
dL_N \otimes
 d\mu_{s, N}^\perp, 
\end{align*}

\noi
where $\ft Z_{s, N, r}$ is a normalizing constant defined by\footnote{
The normalizing constant $\ft Z_{s, N, r}$
a priori depends on $t \in \R$.
Arguing as for 
$Z_{s, N, r}$ and $Z_{s, r}$
defined in  \eqref{Gibbs1} and \eqref{Gibbs1a}, 
however, we see that 
it is indeed independent of $t \in \R$.}
\begin{align*}
\ft Z_{s, N, r}  = \int_{L^2} \ind_{\{ \| v\|_{L^2 } \leq r\}} 
e^{-\frac 12 E_t( \P_{\leq N}  v)} d L_N \otimes d\mu_{s, N}^\perp.
\end{align*}

\noi
Then, we have the following change-of-variable formula:

\begin{proposition}\label{PROP:meas1}
Let $s > \frac 12$, $N \in \N$, and $r > 0$.
Then, we have 
\begin{align}
\rho_{s, N, r, t}(\Psi_N(t, \tau)(A))
& = Z_{s, N, r}^{-1}\int_{\Psi_N(t, \tau)(A)} \ind_{\{ \| v\|_{L^2 } \leq r\}} e^{-\frac 12 R_t( \P_{\leq N} v)} d\mu_s(v)
\notag \\
& = \ft Z_{s, N, r}^{-1}\int_{A} \ind_{\{ \| v\|_{L^2 } \leq r\}} 
e^{-\frac 12 E_t( \P_{\leq N} \Psi_N(t, \tau) (v))} d L_N \otimes d\mu_{s, N}^\perp
\label{meas1}
\end{align}

\noi	
for any $t, \tau \in \R$ and any measurable set 
$A \subset L^2$.


\end{proposition}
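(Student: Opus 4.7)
The first equality in \eqref{meas1} is simply the definition of $\rho_{s, N, r, t}$ in \eqref{Gibbs1}, so the content of the proposition is entirely in the second equality. The plan is to rewrite the Gaussian weight in terms of Lebesgue measure on the low-frequency space $E_N$ and then change variables $w = \Psi_N(t, \tau)(v)$ to transfer integration from $\Psi_N(t, \tau)(A)$ back to $A$. Using the factorization $\mu_s = \mu_{s, N} \otimes \mu_{s, N}^\perp$ from \eqref{G1} together with the formula for $\mu_{s,N}$ in \eqref{G4}, the factor $e^{-\frac 12 R_t(\P_{\leq N} v)}$ multiplied by the Gaussian density $e^{-\frac 12 \|\P_{\leq N} v\|_{H^s}^2}$ is exactly $e^{-\frac 12 E_t(\P_{\leq N} v)}$, and the normalizing constants aggregate as $\ft Z_{s, N, r} = Z_{s, N, r} Z_{s, N}$.

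The change-of-variables step requires three ingredients about the truncated flow. First, by \eqref{flow1}, $\Psi_N(t, \tau)$ coincides with $\wt\Psi_N(t, \tau)$ on $E_N$ and with the identity on $E_N^\perp$, so the $d\mu_{s, N}^\perp$ factor is automatically preserved. Second, the finite-dimensional vector field defining \eqref{NLS7} is divergence-free on $E_N\simeq \C^{2N+1}$: writing $F_n$ for the right-hand side of \eqref{NLS7}, the divergence of the associated real system is $\sum_{|n|\leq N} 2\Re\, \partial_{v_n} F_n$; the non-resonant cubic sum contributes nothing because $\G_N(n)$ excludes both $n_1 = n$ and $n_3 = n$, while $\partial_{v_n}(i|v_n|^2 v_n) = 2i|v_n|^2$ is purely imaginary, so each term vanishes. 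Liouville's theorem then gives that $\wt\Psi_N(t, \tau)$ preserves Lebesgue measure $dL_N$, so the Jacobian of the substitution is $1$. Third, $\Psi_N(t, \tau)$ preserves the $L^2$ mass: a short symmetrization using the involution $(n_1, n_2, n_3, n) \leftrightarrow (n, n_3, n_2, n_1)$ on the summation in \eqref{NLS7}, which sends $\phi(\bar n)$ to $-\phi(\bar n)$ and the cubic product to its complex conjugate, shows that the obstruction $\frac{d}{dt}\sum_{|n|\leq N}|v_n|^2$ is the imaginary part of a real quantity and hence vanishes; consequently the indicator $\ind_{\{\|w\|_{L^2}\leq r\}}$ is unchanged under the substitution. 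Combining these facts with the identity $\P_{\leq N}\Psi_N(t, \tau) = \wt\Psi_N(t, \tau)\P_{\leq N}$ from \eqref{flow1} yields the right-hand side of \eqref{meas1}.

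The central technical point is the divergence-free computation for \eqref{NLS7}; once it is in hand, the remaining steps are routine bookkeeping. Notably, this is a finite-dimensional Liouville-type fact (which is exactly why the truncation \eqref{NLS6} on the low frequencies was introduced), so I expect no serious obstacles beyond carrying out this algebraic check and the companion mass-conservation symmetrization.
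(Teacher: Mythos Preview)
Your proposal is correct and follows essentially the same route as the paper: decompose $\mu_s$ along $E_N \oplus E_N^\perp$, use Liouville's theorem via the divergence-free computation for \eqref{NLS7} to obtain invariance of $dL_N$ under $\wt\Psi_N(t,\tau)$ (this is the paper's Lemma~\ref{LEM:meas1a}), invoke the $L^2$-conservation to handle the cutoff, and unwind the indicator via bijectivity of $\Psi_N(t,\tau)$. The only minor addition in your write-up is the explicit symmetrization argument for mass conservation, which the paper simply asserts.
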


We first state the basic invariance property of $L_N$.

\begin{lemma}\label{LEM:meas1a}
Let $N \in \N$.
Then,  the Lebesgue measure 
$d L_N = \prod_{|n|\leq N} d\ft u_n $ 
is invariant under the flow $\wt \Psi_N(t, \tau)$.

\end{lemma}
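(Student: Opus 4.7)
\smallskip

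The plan is to verify that the vector field defining \eqref{NLS7} is divergence-free on $\C^{2N+1} \simeq \R^{2(2N+1)}$, so that Liouville's theorem yields preservation of $L_N$ under $\wt \Psi_N(t, \tau)$. Writing $v_n = a_n + i b_n$ with $a_n, b_n \in \R$ for $|n| \leq N$, the Lebesgue measure takes the form $dL_N = \prod_{|n|\leq N} da_n\, db_n$ (up to a universal multiplicative constant), and the right-hand side of \eqref{NLS7} defines a (non-autonomous) vector field $X = (X_n)_{|n|\leq N}$ on this phase space. By Wirtinger calculus, the divergence of $X$ is
\[
\textup{div}(X) = \sum_{|n|\leq N} \Big( \tfrac{\partial (\Re X_n)}{\partial a_n} + \tfrac{\partial (\Im X_n)}{\partial b_n}\Big) = 2\Re \sum_{|n|\leq N} \frac{\partial X_n}{\partial v_n},
\]
so it suffices to show that $\sum_{|n|\leq N} \partial_{v_n} X_n$ is purely imaginary.

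I would split $X_n$ into the non-resonant contribution $\NN_N(v)_n := -i \sum_{\G_N(n)} e^{-i \phi(\bar n) t} v_{n_1}\cj{v_{n_2}}v_{n_3}$ and the resonant contribution $\RR(v)_n := i |v_n|^2 v_n$. For the non-resonant piece, I crucially use the constraint built into $\G_N(n)$ from \eqref{Gam3}: every index in the sum satisfies $n_1, n_3 \neq n$. Consequently, $\partial_{v_n}\big(v_{n_1}\cj{v_{n_2}}v_{n_3}\big) = 0$ for every $(n_1, n_2, n_3) \in \G_N(n)$ (the variables $\cj{v_{n_2}}$ do not contribute under the holomorphic derivative $\partial_{v_n}$), giving $\partial_{v_n} \NN_N(v)_n = 0$. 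For the resonant piece, $\partial_{v_n}(i v_n^2 \cj{v_n}) = 2i |v_n|^2$, which is purely imaginary, so $\Re \partial_{v_n} \RR(v)_n = 0$. Summing over $|n|\leq N$ gives $\textup{div}(X) \equiv 0$.

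Since \eqref{NLS7} is a finite-dimensional system of ODEs with smooth (polynomial) right-hand side, local well-posedness and smooth dependence on data are immediate, and the mass conservation $\dt \sum_{|n|\leq N}|v_n|^2 = 0$ (which one reads off directly from \eqref{NLS7} by the same computation that justified \eqref{NLS4}) gives uniform-in-time bounds, hence global existence of the flow $\wt \Psi_N(t, \tau)$. Liouville's theorem applied on any compact subset of phase space, combined with the resulting smooth measure-preserving diffeomorphism property, then shows that for any measurable $A \subset \C^{2N+1}$ and any $t, \tau \in \R$,
\[
L_N\big( \wt \Psi_N(t, \tau)(A) \big) = L_N(A).
\]
The only possible subtlety is keeping track of the exclusion $n_1, n_3 \neq n$ in $\G_N(n)$, which is exactly the reason the off-diagonal contributions drop out; this is the structural input that makes the argument work, and everything else is a direct application of Liouville.
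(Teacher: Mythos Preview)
Your proof is correct and follows essentially the same route as the paper: both verify Liouville's theorem by showing the vector field of \eqref{NLS7} is divergence-free, using that the constraint $n_1, n_3 \neq n$ in $\G_N(n)$ kills the non-resonant contribution to $\partial_{v_n} X_n$, while the resonant contribution $2i|v_n|^2$ is purely imaginary. Your additional remarks on global existence of the flow via mass conservation are a welcome bit of completeness that the paper leaves implicit.
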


\begin{proof}
The finite dimensional system \eqref{NLS7}
basically corresponds to the finite dimensional Hamiltonian approximation
to \eqref{NLS1} under two transformations
 \eqref{gauge2} and \eqref{gauge3}.
Therefore, morally speaking,
 the lemma should follow from 
the inherited Hamiltonian structure and 
Liouville's theorem.
In the following, however, we provide a direct proof.

Write  \eqref{NLS7} as $\dt v_n = X_n$, $|n| \leq N$.
Then, by Liouville's theorem, 
it suffices to show
\[ \sum_{|n| \leq N} \bigg[ \frac{\dd \Re X_n}{\dd \Re v_n}
+ \frac{\dd \Im X_n}{\dd \Im v_n}\bigg] = 0,
\]

\noi
or equivalently,
\begin{align} \sum_{|n| \leq N} \bigg[ \frac{\dd  X_n}{\dd  v_n}
+ \frac{\dd \cj X_n}{\dd \cj v_n}\bigg] = 0.
\label{meas1a}
\end{align}

\noi
Note that the first sum in \eqref{NLS7} does not 
have any contribution
to \eqref{meas1a} due to the frequency restriction $n_1, n_3 \ne n$.
Hence, we have 
\begin{align*} 
 \frac{\dd  X_n}{\dd  v_n}
+ \frac{\dd \cj X_n}{\dd \cj v_n}
= 2 i |v_n|^2 -2 i |v_n|^2   = 0
\end{align*}

\noi
for each $|n|\leq N$.
Therefore, \eqref{meas1a} holds.
\end{proof}

%
%
%
%
%
%

We now present the proof of 
Proposition \ref{PROP:meas1}.

\begin{proof}[Proof of Proposition \ref{PROP:meas1}]
The first equality in \eqref{meas1} is nothing but the definition of $\rho_{s, N, r, t}$.
From \eqref{Gibbs0} and  \eqref{Gibbs1} with \eqref{Hamil2}, 
we have 
\begin{align*}
\rho_{s, N, r, t}& (\Psi_N(t, \tau)(A))
 = \ft Z_{s, N, r}^{-1}
\int_{E_N} \int_{E_N^\perp}\ind_{\Psi_N(t, \tau)(A)} (v)\ind_{\{ \| v\|_{L^2 } \leq r\}} 
e^{-\frac 12 E_t( \P_{\leq N} v)} dL_N \otimes d\mu_{s, N}^\perp
\notag \\
\intertext{By Fubini's theorem, Lemma \ref{LEM:meas1a}, 
and \eqref{flow1}
we have}
& = \ft Z_{s, N, r}^{-1}
 \int_{E_N^\perp}
\bigg\{ \int_{E_N}
 \ind_{\Psi_N(t, \tau)(A)} ( \wt \Psi_N(t, \tau)( \P_{\leq N} v) + \P_{>N} v)
  \notag \\
 & \hphantom{XXXXXXXX}
 \times \ind_{\{ \|\wt \Psi_N(t, \tau) (\P_{\leq N} v) + \P_{>N} v\|_{L^2 } \leq r\}} 
e^{-\frac 12 E_t( \wt \Psi_N(t, \tau)(\P_{\leq N} v))} dL_N\bigg\} d\mu_{s, N}^\perp
\notag \\
& = \ft Z_{s, N, r}^{-1}
 \int_{E_N^\perp}
\bigg\{ \int_{E_N}
 \ind_{\Psi_N(t, \tau)A} ( \Psi_N(t, \tau)  (v)  )  
  \notag \\
 & \hphantom{XXXXXXXX}
 \times
 \ind_{\{ \|\Psi_N(t, \tau)  v\|_{L^2 } \leq r\}} 
e^{-\frac 12 E_t( \P_{\leq N} \Psi_N(t, \tau) (v))} dL_N\bigg\} d\mu_{s, N}^\perp
\end{align*}

\noi
By the bijectivity of $\Psi_N(t, \tau)$, we have 
$ \ind_{\Psi_N(t, \tau)(A)} ( \Psi_N(t, \tau)  (v)  )   =  \ind_{A} (  v  )  $.
We also have 
the $L^2$-conservation: $\|\Psi_N(t, \tau) ( v)\|_{L^2 } = \| v\|_{L^2 }$
Hence, we have 
\begin{align*}
\rho_{s, N, r, t} (\Psi_N(t, \tau)(A))
 = \ft Z_{s, N, r}^{-1}
 \int_{L^2}
 \ind_{A} (   v  )  \ind_{\{\|   v\|_{L^2 } \leq r\}} 
e^{-\frac 12 E_t( \P_{\leq N} \Psi_N(t, \tau)( v))} dL_N\otimes d\mu_{s, N}^\perp.
\end{align*}

\noi
This proves the second equality in \eqref{meas1}. 
\end{proof}

\subsection{On the evolution of the truncated measures}\label{SUBSEC:evo}

In this subsection, we establish a growth estimate
on the truncated measure $\rho_{s, N, r, t}$.  
The key ingredients 
are 
the energy estimate (Proposition \ref{PROP:energy}), 
the large deviation estimate (Lemma \ref{LEM:LD}),
and the change-of-variable formula
(Proposition \ref{PROP:meas1})
from 
the previous subsections.

\begin{lemma}\label{LEM:meas2}
Let $s > \frac 34 $. 
There exists $0 \leq \beta < 1$
such that, given  $r > 0$, 
there exists $C >0$ such that 
\begin{align}
\frac{d}{dt} \rho_{s, N, r, t}(\Psi_N(t) (A))
\leq C p^\beta \big\{ \rho_{s, N, r, t} (\Psi_N(t)(A))\big\}^{1-\frac 1p}
\label{meas2}
\end{align}

\noi
for any $p \geq 2$, 
any $N \in \N$, any $t \in \R$, 
and  any measurable set 
$A \subset L^2(\T)$.
Here, $\Psi_N(t) = \Psi_N(t, 0)$ as in \eqref{Psit}.

\end{lemma}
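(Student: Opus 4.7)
My plan is to combine the change-of-variable formula (Proposition \ref{PROP:meas1}), the energy estimate (Proposition \ref{PROP:energy}), and the large deviation bound (Lemma \ref{LEM:LD}). Starting from the second equality in \eqref{meas1}, I would write
\[
\rho_{s, N, r, t}(\Psi_N(t)(A)) = \ft Z_{s, N, r}^{-1}\int_A \ind_{\{\|v\|_{L^2}\leq r\}}\, e^{-\frac12 E_t(\P_{\leq N}\Psi_N(t)(v))}\, dL_N \otimes d\mu_{s, N}^\perp.
\]
Since $\ft Z_{s, N, r}$ is independent of $t$ (footnote following \eqref{Gibbs1a}) and the finite-dimensional flow $\wt\Psi_N(t)$ is smooth, I can differentiate under the integral sign, which brings down a factor $-\tfrac12 \frac{d}{dt}\bigl[E_t(\P_{\leq N}\Psi_N(t)(v))\bigr]$ inside the integrand.

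To control this time derivative, I apply Proposition \ref{PROP:energy} along the trajectory $t\mapsto\Psi_N(t)(v)$, which by definition is a solution of \eqref{NLS6}. Combined with the conservation of the $L^2$-norm under $\Psi_N(t)$ and the cutoff $\|v\|_{L^2}\leq r$, this yields
\[
\Bigl|\tfrac{d}{dt}E_t\bigl(\P_{\leq N}\Psi_N(t)(v)\bigr)\Bigr| \les r^{4+\theta}\,\|\Psi_N(t)(v)\|_{H^{s-\frac12-\eps}}^{2-\theta}.
\]
At this point I would change variables back by setting $w = \Psi_N(t)(v)$. The decomposition \eqref{flow1} together with the Lebesgue invariance $(\wt\Psi_N(t))_* L_N = L_N$ (Lemma \ref{LEM:meas1a}) and the fact that $\Psi_N(t)$ acts as the identity on the high-frequency sector (where $\mu_{s, N}^\perp$ is supported) show that $dL_N \otimes d\mu_{s, N}^\perp$ is preserved under $\Psi_N(t)$. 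Hence, after pushing the integral from $A$ to $\Psi_N(t)(A)$ and reabsorbing the exponential factor into the definition of $\rho_{s, N, r, t}$, I obtain
\[
\Bigl|\tfrac{d}{dt}\rho_{s, N, r, t}(\Psi_N(t)(A))\Bigr| \les r^{4+\theta}\int_{\Psi_N(t)(A)}\|w\|_{H^{s-\frac12-\eps}}^{2-\theta}\, d\rho_{s, N, r, t}(w).
\]

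Finally, H\"older's inequality with conjugate exponents $p$ and $p/(p-1)$ splits this integral as $\bigl\|\|w\|_{H^{s-\frac12-\eps}}^{2-\theta}\bigr\|_{L^p(\rho_{s, N, r, t})}\cdot\rho_{s, N, r, t}(\Psi_N(t)(A))^{1-\frac1p}$, and Lemma \ref{LEM:LD} bounds the first factor by $\bigl(C(p(2-\theta))^{1/2}\bigr)^{2-\theta} \les_{r,\theta} p^{(2-\theta)/2}$. Setting $\beta := \tfrac{2-\theta}{2}$, which lies in $[0, 1)$ since $\theta > 0$, delivers \eqref{meas2}. The key technical input is Proposition \ref{PROP:energy} (which has already been proved); the rest is essentially bookkeeping, and the only real obstacle I anticipate is carefully tracking that all the implicit constants are uniform in $N \in \N$ and $t \in \R$. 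This uniformity is, however, inherited from the corresponding uniform statements in Proposition \ref{PROP:energy} and Lemma \ref{LEM:LD}.
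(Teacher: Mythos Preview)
Your proposal is correct and follows essentially the same approach as the paper: change-of-variable formula (Proposition~\ref{PROP:meas1}), energy estimate (Proposition~\ref{PROP:energy}), H\"older, and the large deviation bound (Lemma~\ref{LEM:LD}), arriving at $\beta = 1 - \tfrac{\theta}{2}$. The only cosmetic difference is that the paper applies Proposition~\ref{PROP:meas1} with the set $\Psi_N(t_0)(A)$ and the flow $\Psi_N(t_0+t,t_0)$ (``reducing to $t=0$''), so the domain of integration is already $\Psi_N(t_0)(A)$ after differentiation, whereas you integrate over $A$ and then change variables $w=\Psi_N(t)(v)$ at the end; these are the same change of variables executed in opposite orders.
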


\noi
 As in \cite{TzV1, TzV2, Tzv}, 
the main idea of the proof of Lemma \ref{LEM:meas2}
is to reduce the analysis to that at $t = 0$.

\begin{proof}
Let  $t_0 \in \R$.
By the definition of 
$\Psi(t, \tau)$
and Proposition \ref{PROP:meas1},  we have 
\begin{align*}
\frac{d}{dt}\rho_{s, N, r, t}&  (\Psi_N(t)(A))\bigg|_{t = t_0}\notag\\
& = \frac{d}{dt}\rho_{s, N, r, t_0+t} \big(\Psi_N(t_0+ t, t_0)(\Psi_N(t_0)(A))\big)\bigg|_{t = 0}\notag \\
& = \ft Z_{s, N, r}^{-1}
\frac{d}{dt} \int_{\Psi_N(t_0) (A)} \ind_{\{ \| v\|_{L^2 } \leq r\}} 
e^{-\frac 12 E_{t_0+t}( \P_{\leq N} \Psi_N(t_0+ t, t_0) (v))} d L_N \otimes d\mu_{s, N}^\perp \bigg|_{t = 0} \notag\\
& = - \frac 12 
 \int_{\Psi_N(t_0)( A)} 
\frac{d}{dt} E_{t_0+t}\big( \P_{\leq N} \Psi_N(t_0 + t, t_0) (v)\big)\bigg|_{t = 0}
 d \rho_{s, N, r, t_0}.
\end{align*}

\noi
Hence, by Proposition \ref{PROP:energy}, 
H\"older's inequality, 
and Lemma \ref{LEM:LD}, 
we have 
\begin{align*}
\frac{d}{dt}\rho_{s, N, r, t} (\Psi_N(t)(A))\bigg|_{t = t_0}
& \leq C
\Big\| \| v \|_{L^2}^{4+ \theta} \|v\|_{H^{s - \frac 12 - \eps}}^{2-\theta}
\Big\|_{L^p(\rho_{s, N, r, t_0})}
\big\{  \rho_{s, N, r, t_0}(\Psi_N(t_0) (A))\big\}^{1 - \frac 1p} \notag\\
& \leq C
r^{4+\theta}
\Big\| \|v\|_{H^{s - \frac 12 - \eps}}
\Big\|_{L^{(2-\theta)p}(\rho_{s, N, r, t_0})}^{2-\theta}
\big\{  \rho_{s, N, r, t_0}(\Psi_N(t_0) (A))\big\}^{1 - \frac 1p} \notag\\
& \leq C_r
p^{1- \frac{\theta}{2}}
\big\{  \rho_{s, N, r, t_0}(\Psi_N(t_0) (A))\big\}^{1 - \frac 1p}
\end{align*}

\noi
for some small $\theta > 0$.
This proves \eqref{meas2} with $\be = 1- \frac{\theta}{2}$.
\end{proof}

As a corollary to Lemma \ref{LEM:meas2}, 
we obtain the following control on the truncated measures $\rho_{s, N, r, t}$.

\begin{lemma}\label{LEM:meas3}
Let $s > \frac 34$.
Then, given $t \in \R$, $r > 0$, and $\dl >0$, 
 there exists $C = C(t, r, \dl) > 0$
such that 
\begin{align*}
\rho_{s, N, r, t} (\Psi_N(t) (A)) \leq C\big\{\rho_{s, N, r, t} ( A) \big\}^{1-\dl}
\end{align*}
	
\noi
for 
any $N \in \N$
and  any measurable set 
$A \subset L^2(\T)$.

\end{lemma}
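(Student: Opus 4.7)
The plan is to integrate the differential inequality from Lemma \ref{LEM:meas2} along the flow and then optimize the free parameter $p$ as a function of $a := \rho_{s,N,r,t}(A)$. By the time reversibility of \eqref{NLS6}, I may assume $t > 0$.

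Set $f(\tau) := \rho_{s,N,r,\tau}(\Psi_N(\tau)(A))$ for $\tau \in [0,t]$, so $f(t)$ is the left-hand side to be bounded and $f(0) = \rho_{s,N,r,0}(A)$. Lemma \ref{LEM:meas2} gives $f'(\tau) \leq C p^\beta f(\tau)^{1-1/p}$, which rewrites as $(f^{1/p})' \leq C p^{\beta-1}$; integrating over $[0,t]$ yields
\[
f(t)^{1/p} \leq f(0)^{1/p} + C t\, p^{\beta-1}.
\]
To compare $\rho_{s,N,r,0}(A)$ to $a = \rho_{s,N,r,t}(A)$, observe that both measures are absolutely continuous with respect to $\mu_s$ with densities proportional to $F_{N,r,0}$ and $F_{N,r,t}$, whose ratio on $\{\|v\|_{L^2} \leq r\}$ equals $e^{(R_t - R_0)(\P_{\leq N} v)/2}$. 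The super-Gaussian tail bounds on $R_t$ uniform in $N$ and $t$, established inside the proof of Proposition \ref{PROP:Gibbs} (namely $|R_t|\lesssim r^4$ for $\frac12<s\leq 1$, and $\mu_s(|R_t|\geq K,\|v\|_{L^2}\leq r)\lesssim e^{-c_r K^{1+1/(s-1)}}$ for $s>1$), give $\|F_{N,r,0}/F_{N,r,t}\|_{L^q(\rho_{s,N,r,t})} \leq K(q,r,t)$ for every $q \geq 1$, uniformly in $N$; H\"older's inequality then yields $\rho_{s,N,r,0}(A) \leq K(q,r,t)\, a^{1-1/q}$.

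Combining the two bounds via $(x+y)^p \leq 2^{p-1}(x^p + y^p)$ gives
\[
f(t) \;\leq\; 2^{p-1}\Bigl[ K(q,r,t)\, a^{\,1-1/q} + (Ct)^p\, p^{\,p(\beta-1)} \Bigr].
\]
Given $\delta > 0$, the case $a \geq a_0(\delta)$ is immediate from $f(t) \leq 1$. For $a$ small, I take $q = 2/\delta$ and $p = c_\delta \log(1/a)$ with $c_\delta \leq \delta/(2\log 2)$, so that $2^p \leq a^{-\delta/2}$; the first bracketed term is then $\leq K'\, a^{1-\delta}$. For the second, the decisive point is that $\beta < 1$, which is precisely the gain provided by the modified energy in Proposition \ref{PROP:energy}: the factor $p^{p(\beta-1)} = \exp(-p(1-\beta)\log p)$ decays like $\exp(-c_\delta(1-\beta)\log(1/a)\log\log(1/a))$, which beats any polynomial in $a$ and comfortably absorbs the $(2Ct)^p = \exp(c_\delta \log(2Ct)\log(1/a))$ factor. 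Hence $f(t) \leq C(t,r,\delta)\, a^{1-\delta}$, as required.

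The main obstacle is the simultaneous balancing of the geometric loss $2^p$, the H\"older loss $a^{-1/q}$, and the super-exponential gain $p^{p(\beta-1)}$; this balance forces the scaling $p \sim \log(1/a)$ and makes essential use of $\beta < 1$. Without the modified-energy improvement in Proposition \ref{PROP:energy}, which upgrades the naive $\beta = 1$ to $\beta = 1 - \theta/2$, the second bracketed term would no longer decay and the entire argument would collapse.
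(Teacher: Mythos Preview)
Your argument is correct and follows the same Yudovich-type scheme as the paper: integrate the differential inequality $(f^{1/p})' \leq Cp^{\beta-1}$ from Lemma~\ref{LEM:meas2} and then optimize by taking $p$ of order $\log(1/a)$, using crucially that $\beta<1$. The paper carries this out more compactly by writing
\[
f(t) \;\leq\; \bigl(f(0)^{1/p} + Ctp^{-\alpha}\bigr)^p
\;=\; f(0)\,\exp\!\Bigl(p\log\bigl(1 + Ctp^{-\alpha}f(0)^{-1/p}\bigr)\Bigr)
\;\leq\; f(0)\,e^{Ctp^{1-\alpha}f(0)^{-1/p}},
\]
via $\log(1+x)\leq x$, and then setting $p = 2 - \log f(0)$ so that $f(0)^{-1/p}\leq e$; this avoids your splitting $(x+y)^p\leq 2^{p-1}(x^p+y^p)$ and the ensuing bookkeeping of the $2^p$ loss.

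One point where you are in fact more careful than the paper: the integration naturally produces $f(0)=\rho_{s,N,r,0}(A)$ on the right, whereas the lemma is stated with $\rho_{s,N,r,t}(A)$. The paper silently writes $\rho_{s,N,r,t}(A)$ after integrating; you explicitly bridge the two via H\"older and the uniform-in-$(N,t)$ exponential integrability of $R_t$ from Proposition~\ref{PROP:Gibbs}. That extra step is legitimate and closes a small gap in the paper's presentation, though for the downstream application (mutual absolute continuity with $\mu_{s,r}$) either version of the right-hand side suffices.
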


\begin{proof}
As in \cite{TzBBM}, we apply a variant of  Yudovich's argument \cite{Y}.
From  Lemma \ref{LEM:meas2}, we have 
\begin{align}
\frac{d}{dt}\big\{ \rho_{s, N, r, t}(\Psi_N(t) (A))\big\}^\frac{1}{p}
\leq C p^{-\al}
\label{meas3}
\end{align}

\noi
for any $p \geq 2$,
where $\al = 1 - \be > 0$.
Integrating \eqref{meas3}, we have
\begin{align}
 \rho_{s, N, r, t}(\Psi_N(t) (A))
&  \leq 
\big\{ 
(  \rho_{s, N, r, t}(A))^\frac{1}{p} + C t p^{-\al}\big\}^p \notag \\
& = 
\rho_{s, N, r, t}(A) e^{p \log \big\{1 +  C t p^{-\al}
\rho_{s, N, r, t}(A)^{-\frac 1 p}\big\}}\notag \\
& 
\leq 
\rho_{s, N, r, t}(A) e^{ C t p^{1-\al}
\rho_{s, N, r, t}(A)^{-\frac 1 p}}, 
\label{meas4}
\end{align}

\noi
where, in the last inequality, we used the fact that  $\log (1+x) \leq x$
for $x \geq 0$.
By choosing $p = 2 - \log \rho_{s, N, r, t}(A)$ such that 
\[ \rho_{s, N, r, t}(A)^{-\frac 1 p} =  e^{ 1- \frac 2 p } \leq e, \]

\noi
it follows from \eqref{meas4} that 
\begin{align}
 \rho_{s, N, r, t}(\Psi_N(t) (A))
\leq 
\rho_{s, N, r, t}(A) e^{ C e t \{ 2 - \log \rho_{s, N, r, t}(A)\}^{1-\al}}.
\label{meas4a}
\end{align}

We claim that, given $\dl > 0$, there exists 
$C = C(t,  \dl, \al) > 0$
such that 
\begin{align}
e^{ C e t \{ 2 - \log \rho\}^{1-\al}} \leq 
 C(t,  \dl, \al) \rho^{-\dl}
\label{meas4b}
\end{align}

\noi
for all $\rho\in [0, 1]$.
By rewriting \eqref{meas4b}, 
it suffices to prove 
\begin{align}
 \{ 2 - \log \rho\}^{1-\al} \leq 
-\dl \log \rho
+ \log C(t,  \dl, \al) 
\label{meas4c}
\end{align}

\noi
Clearly, \eqref{meas4c} holds  as $\rho \to 1-$
by choosing sufficiently large $C(t,  \dl, \al) >0$.
On the other hand, 
 \eqref{meas4c} also holds  as $\rho \to 0+$,
 since $\al > 0$.
Hence, \eqref{meas4c} holds
for all $\rho \in [0, 1]$
by the continuity of $\log \rho$ and choosing sufficiently large $C(t,  \dl, \al) >0$.

Therefore, 
from \eqref{meas4a} and \eqref{meas4b}, 
we conclude that given $\dl > 0$,  
there exists 
$C = C(t, r, \dl, \al) > 0$
such that 
\begin{align*}
\rho_{s, N, r, t} (\Psi_N(t) (A)) \leq C(t, r, \dl, \al) \big\{\rho_{s, N, r, t} ( A) \big\}^{1-\dl}.
\end{align*}

\noi
This completes the proof of Lemma \ref{LEM:meas3}.
\end{proof}

\subsection{Proof of Theorem \ref{THM:quasi}} \label{SUBSEC:proof}

We conclude this section 
by  presenting the proof of Theorem~\ref{THM:quasi} for $s > \frac 34$.
Before doing so, we first upgrade Lemma \ref{LEM:meas3}
to the untruncated measure $\rho_{s, r, t}$.

\begin{lemma}\label{LEM:meas4}
Let $s > \frac 34$.
Then, given  $t \in \R$, $r > 0$, $R >0$, and $\dl >0$, 
 there exists $C = C(t, r, \dl) > 0$
such that 
\begin{align}
\rho_{s,  r, t} (\Psi(t) (A)) \leq C\big\{\rho_{s,  r, t} ( A) \big\}^{1-\dl}.
\label{meas5}
\end{align}

\noi
for  any measurable set 
$A  \subset L^2(\T)$.

\end{lemma}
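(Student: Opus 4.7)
The plan is to deduce the untruncated bound from the truncated one (Lemma \ref{LEM:meas3}) by combining two ingredients established earlier: the uniform approximation $\rho_{s, N, r, t} \to \rho_{s, r, t}$ on measurable sets (Corollary \ref{COR:Gibbs}), and the convergence of the truncated flow $\Psi_N(t)$ to $\Psi(t)$ provided by the approximation result of Appendix \ref{SEC:approx}. The whole argument rests on a lower-semicontinuity inequality whose verification is the only delicate point.

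First, I would reduce to the case of open sets. By outer regularity of the Radon measure $\rho_{s, r, t}$ on the Polish space $L^2(\T)$, any measurable $A$ is approximated from above by open $U \supset A$ with $\rho_{s, r, t}(U \setminus A)$ arbitrarily small, so it suffices to establish \eqref{meas5} for open $U$ and let the error tend to zero at the end.

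Next, for open $U$, I would prove the lower semicontinuity
\begin{align*}
\rho_{s, r, t}(\Psi(t)(U)) \leq \liminf_{N \to \infty} \rho_{s, r, t}(\Psi_N(t)(U)).
\end{align*}
Since $\Psi(t)$ is a homeomorphism of $L^2(\T)$ (by Proposition \ref{PROP:GWP} together with time reversibility), the set $\Psi(t)(U)$ is open. For $\rho_{s, r, t}$-almost every $v \in \Psi(t)(U)$, the preimage $w := \Psi(t)^{-1}(v)$ lies in $U$, and the approximation result of Appendix \ref{SEC:approx}, applied along the backward flow on a ball of $H^{s-\frac12-\eps}$ containing $v$, yields $\Psi_N(t)^{-1}(v) \to w$ in $L^2(\T)$ as $N\to\infty$. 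Openness of $U$ then forces $\Psi_N(t)^{-1}(v) \in U$, hence $v \in \Psi_N(t)(U)$, for all sufficiently large $N$. Thus $\mathbf{1}_{\Psi(t)(U)}(v) \leq \liminf_N \mathbf{1}_{\Psi_N(t)(U)}(v)$ $\rho_{s, r, t}$-a.e., and Fatou's lemma gives the claim.

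Finally, for any $\gamma > 0$, Corollary \ref{COR:Gibbs} combined with Lemma \ref{LEM:meas3} gives, for all $N$ large enough,
\begin{align*}
\rho_{s, r, t}(\Psi_N(t)(U))
&\leq \rho_{s, N, r, t}(\Psi_N(t)(U)) + \gamma
\leq C\, \rho_{s, N, r, t}(U)^{1-\dl} + \gamma \\
&\leq C\, (\rho_{s, r, t}(U) + \gamma)^{1-\dl} + \gamma.
\end{align*}
Passing to the $\liminf$ via the claim above, sending $\gamma \to 0$ (using continuity of $x \mapsto x^{1-\dl}$), and finally applying outer regularity to pass from $U$ back to $A$, we arrive at \eqref{meas5}. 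The only real obstacle is the lower-semicontinuity step, which depends on the pointwise backward convergence $\Psi_N(t)^{-1}(v) \to \Psi(t)^{-1}(v)$ on a set of full $\rho_{s, r, t}$-measure; this is precisely where the approximation property of Appendix \ref{SEC:approx}, together with the $H^{s-\frac12-\eps}$ regularity of $\rho_{s, r, t}$-typical data and time reversibility of \eqref{NLS5} and \eqref{NLS6}, is used in an essential way.
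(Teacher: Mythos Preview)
Your argument is correct and reaches the conclusion, but it is organized differently from the paper's proof. The paper proceeds via \emph{inner} regularity and compact sets: it first proves \eqref{meas5} for compact $A\subset B_R$ by invoking the set containment $\Psi(t)(A)\subset \Psi_N(t)(A+B_\eps)$ from Proposition~\ref{PROP:approx}, then chains Corollary~\ref{COR:Gibbs} and Lemma~\ref{LEM:meas3} exactly as you do, and finally lets $\eps,\gamma\to 0$ using continuity from above; general measurable $A$ is then handled by picking compact $K_j\subset \Psi(t)(A)$ and pulling back through the homeomorphism $\Psi(0,t)$. You instead use \emph{outer} regularity and open sets, replacing the containment lemma with a Fatou/lower-semicontinuity step built on the \emph{pointwise} backward convergence $\Psi_N(0,t)(v)\to\Psi(0,t)(v)$.

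Two remarks on your version. First, the pointwise backward convergence you invoke is not stated verbatim in Appendix~\ref{SEC:approx}; it follows, however, by applying Proposition~\ref{PROP:approx} to the singleton $A=\{\Psi(0,t)(v)\}$ (which is compact and lies in $B_r$ on $\supp(\rho_{s,r,t})$ by $L^2$-conservation), so you should say this explicitly. Second, the paper's route has the mild advantage that it uses Proposition~\ref{PROP:approx} as a black box about sets, whereas your route needs the reader to unpack it to a pointwise statement; conversely, your Fatou argument avoids the $\eps$-thickening $A+B_\eps$ and the continuity-from-above passage, which is a bit cleaner. Either path is fine.
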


\begin{proof}

Given $R > 0$, let $B_R$ denote the ball of radius $R$
centered at the origin in $L^2(\T)$.
We first consider the case when $A$ is compact in $L^2$
and $A\subset B_R$ 
for some $R >0$.
It follows 
from Proposition \ref{PROP:approx} and 
Corollary \ref{COR:Gibbs}
that, given $\eps, \g  > 0$, there exists $N_0  = N_0(t, R, \eps, \g)\in \N$
such that 
\begin{align*}
\rho_{s, r, t} (\Psi(t) (A)) 
& \leq 
\rho_{s, r, t} (\Psi_N(t) (A+B_\eps)) 
 \leq \rho_{s, N, r, t} (\Psi_N(t) (A+B_\eps)) + \g
\end{align*}

\noi
for any $N \geq N_0$.
Then, by Lemma \ref{LEM:meas3}
and Corollary \ref{COR:Gibbs}, we have  
\begin{align}
\rho_{s, r, t} (\Psi(t) (A)) 
& \leq 
C(t, r, \dl)
\big\{\rho_{s, N, r, t} (A+B_\eps)  \big\}^{1-\dl} + \g \notag \\
& \leq 
C(t, r,  \dl)
\big\{\rho_{s,  r, t} ( A+B_\eps)  \big\}^{1-\dl} + 2 \g.
\label{meas6} 
\end{align}

\noi
Hence, 
by taking a limit of  \eqref{meas6} as $\eps, \g \to 0$
(with the continuity from above of a probability measure), 
we obtain 
\eqref{meas5} in this case.

Next, let $A$ be any  measurable set in $L^2$.
Then, by the inner regularity of $\rho_{s, r, t}$,
there exists a sequence $\{K_j\}_{j \in \N}$
of compact sets such that $K_j \subset \Psi(t)( A)$ and 
\begin{equation}
 \rho_{s, r, t} (\Psi(t) (A)) = \lim_{j \to \infty} \rho_{s, r, t} (K_j).
\label{meas7}
 \end{equation}

\noi
By the bijectivity of $\Psi(t, \tau)$, we have
\[K_j = \Psi(t, 0) (\Psi(0, t) (K_j))= \Psi(t) (\Psi(0, t) (K_j)).\]

\noi
Note that $\Psi(0, t) (K_j)$ is compact since it is the image of a compact 
set $K_j$ under the continuous map $\Psi(0, t)$.
Moreover, we have $\Psi(0, t) (K_j) \subset \Psi(0, t) \Psi(t) (A) = A$.
Then, by \eqref{meas5} applied to $\Psi(0, t) (K_j)$, we have
\begin{align}
\rho_{s, r, t} (K_j)
& = \rho_{s, r, t} \big(\Psi(t)  (\Psi(0, t) (K_j))\big)
\leq C\big\{\rho_{s,  r, t} ( \Psi(0, t) (K_j) ) \big\}^{1-\dl}\notag \\
& \leq C\big\{\rho_{s,  r, t} ( A) \big\}^{1-\dl}.
\label{meas8}
\end{align}
	
\noi
By taking a limit as $j \to \infty$, 
we obtain 
\eqref{meas5}
from 
\eqref{meas7} and \eqref{meas8}.
\end{proof}

Finally, we present the proof of  Theorem \ref{THM:quasi}.

\begin{proof}[Proof of Theorem \ref{THM:quasi}]

As in Section \ref{SEC:Ramer}, 
it follows from 
 Lemmas \ref{LEM:gauss2}, \ref{LEM:gauss3}, and \ref{LEM:comp}
that it suffices to prove that $\mu_s$
is quasi-invariant under $\Psi(t)$, i.e.
the dynamics of \eqref{NLS5}.

Fix $t \in \R$. Let $A \subset L^2(\T)$ be a measurable set 
such that $\mu_s(A) = 0$.
Then, for any $r > 0$, we have 
\[\mu_{s, r}(A) = 0.\]

\noi
By the mutual absolute continuity 
 of $\mu_{s, r}$ and $\rho_{s, r, t}$, 
 we obtain
\[\rho_{s, r, t}(A) = 0\]

\noi
 for any $r > 0$.
Then, by Lemma \ref{LEM:meas4}, we have
\[\rho_{s, r, t}(\Psi(t) (A)) = 0.\]

\noi
By invoking the mutual absolute continuity 
 of $\mu_{s, r}$ and $\rho_{s, r, t}$ once again, 
 we have
\[\mu_{s, r}(\Psi(t) (A)) = 0.\]

\noi
Then, the dominated convergence theorem yields
\[\mu_{s}\big(\Psi(t) (A)\big) 
= \lim_{r \to \infty} 
\mu_{s, r}\big(\Psi(t) (A)\big) = 0.\]

\noi
This completes the proof of Theorem \ref{THM:quasi}.
\end{proof}

%
%
%
%
%
%
%


\appendix

\section{On the Cauchy problem \eqref{NLS1}}
\label{SEC:WP}

In this appendix, we discuss the well-posedness issue 
for the Cauchy problem \eqref{NLS1}.
In particular, we prove global well-posedness of \eqref{NLS1} in $L^2(\T)$
and ill-posedness below $L^2(\T)$.


\subsection{Well-posedness in $L^2(\T)$}
\label{SUBSEC:LWP}

We say $u$ is a solution to \eqref{NLS1} if $u$ satisfies the following Duhamel formulation
\begin{align*}
u (t)  = S(t) u_0 \mp i \int_0^t S(t - t') |u|^2 u(t') dt', 
\end{align*}

\noi
where $S(t) = e^{-i t\dx^4}$.
The main result of this section is the following local well-posedness of \eqref{NLS1}.

\begin{proposition}\label{PROP:LWP}
Let $s \geq 0$.  Then, given $ u_0\in H^s(\T)$, 
there exist $T = T(\|u_0\|_{L^2}) > 0$ and 
a unique solution $u \in C([-T, T]; H^s)$ to \eqref{NLS1} with $u|_{t = 0} = u_0$.
Moreover, we have
\begin{align}
\sup_{t \in [-T, T]}\|u(t) \|_{H^s} 
& \leq C \| u_0 \|_{H^s}. 
\label{LWP1a}
\end{align}

\end{proposition}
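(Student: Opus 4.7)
The plan is to run a standard contraction argument for the Duhamel operator
\[
\Phi_{u_0}(u)(t) := S(t) u_0 \mp i \int_0^t S(t-t') \bigl(|u|^2 u\bigr)(t')\, dt'
\]
in Bourgain's Fourier restriction space $X^{s,b}_T$ adapted to the linear propagator $S(t) = e^{-it\dx^4}$, with norm
\[
\|u\|_{X^{s,b}} = \bigl\| \jb{n}^s \jb{\tau - n^4}^b \widetilde u(\tau, n) \bigr\|_{L^2_\tau \ell^2_n}
\]
and $X^{s,b}_T$ its restriction to the time interval $[-T,T]$. For $b > \tfrac12$, one has the standard embedding $X^{s,b}_T \hookrightarrow C([-T,T]; H^s)$, together with the linear estimates $\|S(t)u_0\|_{X^{s,b}_T} \lesssim \|u_0\|_{H^s}$ and $\|\int_0^t S(t-t')F(t')dt'\|_{X^{s,b}_T} \lesssim T^{\theta}\|F\|_{X^{s,b-1}_T}$ for some $\theta > 0$. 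The whole problem thus reduces to a cubic multilinear estimate for $|u|^2 u$ in $X^{s,b-1}_T$.

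The heart of the argument is the $L^4_{t,x}$-Strichartz estimate
\[
\|S(t)\phi\|_{L^4_{t,x}(\T \times [0, 2\pi])} \lesssim \|\phi\|_{L^2(\T)},
\]
which I would prove by Plancherel on $\T \times [0, 2\pi]$ (viewed as $\T \times \T$ via time periodization): expanding $|S(t)\phi|^2$ on the Fourier side and using the factorization $n_1^4 - n_2^4 = (n_1-n_2)(n_1+n_2)(n_1^2 + n_2^2)$, the system $n_1 - n_2 = m$, $n_1^4 - n_2^4 = \sigma$ has at most $O(1)$ solutions $(n_1,n_2)$ for each $m \neq 0$, while the $m = 0$ contribution is exactly $\|\phi\|_{L^2}^4$. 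Cauchy--Schwarz then closes the bound with no loss of derivative. By the standard transfer principle this yields $X^{0,\frac12+}_T \hookrightarrow L^4_{t,x}$, and H\"older combined with the dual embedding $L^{4/3}_{t,x} \hookrightarrow X^{0, -\frac12-}_T$ gives the trilinear estimate
\[
\||u|^2 u\|_{X^{s, b-1}_T} \lesssim T^{\theta'}\, \|u\|_{X^{0, b}_T}^2\, \|u\|_{X^{s, b}_T}, \qquad b = \tfrac12 + \delta,
\]
for all $s \geq 0$. For $s > 0$ the $\jb{n}^s$ weight is placed on the largest-frequency factor using $\jb{n}^s \lesssim \sum_{j=1}^3 \jb{n_j}^s$ under $n = n_1 - n_2 + n_3$, so the remaining two factors enter without weight.

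Combining the linear and trilinear estimates gives
\[
\|\Phi_{u_0}(u)\|_{X^{s,b}_T} \leq C\|u_0\|_{H^s} + C T^{\theta'}\|u\|_{X^{0, b}_T}^2\|u\|_{X^{s, b}_T},
\]
and similarly for the difference of two solutions. The crucial point is that the $X^{0,b}_T$-norm of the fixed point is controlled solely by $\|u_0\|_{L^2}$ (via the $s = 0$ version of the contraction), so choosing $T \sim \|u_0\|_{L^2}^{-c}$ closes the contraction on a ball of radius $2C\|u_0\|_{H^s}$ in $X^{s, b}_T$, with $T$ depending only on $\|u_0\|_{L^2}$; uniqueness is standard, and \eqref{LWP1a} is immediate from $X^{s,b}_T \hookrightarrow C([-T,T]; H^s)$ and the fixed-point identity. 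The only substantive step is the $L^4_{t,x}$-Strichartz estimate above; thanks to the very non-degenerate phase $n \mapsto n^4$, the relevant lattice-point count reduces via factorization to $O(1)$ solutions per level set, making the argument cleaner than the cubic Schr\"odinger case and lossless in derivatives.
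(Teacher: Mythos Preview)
Your overall strategy matches the paper's---contraction in $X^{s,b}$ driven by an $L^4$-Strichartz estimate, with $\jb{n}^s$ placed on the largest frequency and $T$ determined by the $L^2$-level contraction---and the free-solution bound $\|S(t)\phi\|_{L^4(\T\times[0,2\pi])}\lesssim\|\phi\|_{L^2}$ you sketch is correct (for fixed $m\ne0$ the equation $n_1^4-(n_1-m)^4=\sigma$ is cubic in $n_1$). But the argument does not close as written. The transfer principle applied to a free-solution $L^4$ bound yields only $\|u\|_{L^4}\lesssim\|u\|_{X^{0,b_0}}$ for $b_0>\tfrac12$, hence dually $L^{4/3}\hookrightarrow X^{0,-\frac12-}$. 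With $b=\tfrac12+\delta$ you must control $\||u|^2u\|_{X^{s,b-1}}=\||u|^2u\|_{X^{s,-\frac12+\delta}}$, a space \emph{strictly smaller} than $X^{s,-\frac12-}$; equivalently, the inhomogeneous estimate $\|\eta_T\int_0^tS(t-t')F\,dt'\|_{X^{s,b}}\lesssim T^{1-b+b'}\|F\|_{X^{s,b'}}$ (Lemma~\ref{LEM:lin1}\,(ii)) requires $b'>-\tfrac12$ and gains a power of $T$ only when $b'>b-1$. There is thus no admissible $b'$, and your stated linear estimate with a $T^\theta$-gain at exponent exactly $b-1$ is in fact false.

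What is needed---and what the paper proves in Lemma~\ref{LEM:L4}---is the sharper bound $\|u\|_{L^4}\lesssim\|u\|_{X^{0,b_0}}$ for some $b_0<\tfrac12$ (specifically $b_0=\tfrac5{16}$). This cannot be read off from the free estimate via transfer; one works directly with modulation: decompose $u=\sum_M u_M$ with $\jb{\tau+n^4}\sim M$, estimate $\|u_M u_{2^mM}\|_{L^2}$ via Plancherel, and count lattice points on the thickened level set $\tau+n_1^4+n_2^4=O(2^mM)$ with $n_1+n_2$ fixed, obtaining $A(n,\tau)^{1/2}\lesssim 2^{m/8}M^{5/8}$. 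With $b_0=\tfrac5{16}$ one takes $\tfrac12<b\le\tfrac{11}{16}-\delta$ and $b'=b-1+\delta>-\tfrac12$, so that duality and H\"older give $\|\Gamma(u)\|_{X^{0,b}}\lesssim\|u_0\|_{L^2}+T^\delta\|u\|_{X^{0,5/16}}^3$; the $H^s$-version and \eqref{LWP1a} then follow exactly as you describe.
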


\noi
See Remark \ref{REM:uniq} below for the precise uniqueness statement.
Once we prove Proposition \ref{PROP:LWP}, 
global well-posedness (Proposition \ref{PROP:GWP}) 
  follows from the conservation of mass \eqref{mass1}.
We prove  Proposition \ref{PROP:LWP} via
the Fourier restriction norm method \cite{BO93}.
While the argument is standard, 
we present the details of the proof  for the sake of completeness.

Given $s, b \in \R$, define $X^{s, b}$
as the completion of $\S(\T\times \R)$ under the following norm:
\begin{align}
\| u\|_{X^{s, b}(\T\times\R)} = \| \jb{n}^s \jb{\tau + n^4}^b \ft u(n, \tau)\|_{\l^2_nL^2_\tau}.
\label{Xsb}
\end{align}

\noi
Given a time interval $I \subset \R$, we define
 the local-in-time version $X^{s, b}_I$ restricted to the time interval $I$
 by setting
 \[ \| u \|_{X^{s, b}(I)} = \inf \big\{ \|\wt u  \|_{X^{s, b}} : \, \wt u|_I = u\big\}.\]

\begin{remark}\label{REM:uniq} \rm 
When $s > \frac 12$, the uniqueness statement in Proposition \ref{PROP:LWP}
holds in $C([-T, T]; H^s)$.
When $s \in [0, \frac 12]$, the uniqueness holds
only within a  ball in 
$C([-T, T]; H^s) \cap X^{s, b}([-T, T])$
for some $b > \frac 12$.\footnote{
When $s \geq \frac 16$, 
one can also prove unconditional uniqueness in 
the entire  $C([-T, T]; H^s)$,
 by applying normal form reductions infinitely many times
as in \cite{GKO}.
 The proof is precisely the same as that in \cite{GKO}
 for the standard cubic NLS on $\T$.}
\end{remark}

Before presenting the proof of Proposition \ref{PROP:LWP}, 
we first go over preliminary lemmas.
Let $\eta \in C^\infty_c(\R)$ be a smooth cut off function 
such that $\eta(t) \equiv 1 $ for $|t|\leq 1$ and 
$\eta(t) \equiv 0 $ for $|t|\geq 2$.
Given $T>0$, set $\eta_{_T}(t) = \eta(T^{-1} t)$.
Then, we have the following basic linear estimates.
See \cite{BO93, KPV93, GTV,  TAO}

\begin{lemma}\label{LEM:lin1}
Let $s \in \R$.

\smallskip

\noi
\textup{(i)} For any $b \in \R$, we have
\begin{align*}
\| \eta (t) S(t) u_0 \|_{X^{s, b}}
\leq C_b \|u_0\|_{H^{s}}.
\end{align*}

\smallskip

\noi
\textup{(ii)}
Let $ - \frac 12 < b' \leq 0 \leq b \leq b'+1$.
Then, for $T \leq 1$, we have 
\begin{align*}
\bigg\| \eta_{_T} (t) \int_0^t S(t-t')F(t') dt'\bigg\|_{X^{s, b}}
\leq C_{b, b'} T^{1-b+b'} \|F\|_{X^{s, b'}}.
\end{align*}
\end{lemma}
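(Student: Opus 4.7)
The plan is to prove both estimates by computing the space-time Fourier transform directly and exploiting that the symbol $\tau + n^4$ of the linear operator $i\dt + \dx^4$ is precisely what $X^{s,b}$ is designed around.

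For part (i), I would first compute
\[
\F_{x,t}\big[\eta(t)S(t)u_0\big](n,\tau) = \ft{u_0}(n)\,\ft\eta(\tau + n^4),
\]
since $\F_t[\eta(t)e^{-itn^4}](\tau) = \ft\eta(\tau + n^4)$. Plugging into the defining norm \eqref{Xsb} and making the substitution $\tau' = \tau + n^4$ in the $\tau$-integral, the dependence on $n$ in the modulation weight decouples, giving
\[
\|\eta(t)S(t)u_0\|_{X^{s,b}}^2 = \|u_0\|_{H^s}^2 \cdot \int_\R \jb{\tau'}^{2b}|\ft\eta(\tau')|^2\, d\tau',
\]
and the last integral is finite because $\eta\in\S(\R)$. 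This gives the claimed bound with $C_b < \infty$ for every $b \in \R$.

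For part (ii), the plan is to write $F$ via its space-time Fourier representation and evaluate the Duhamel integral explicitly on the Fourier side. Using $F(x,t') = \sum_n \int \ft F(n,\tau)\, e^{inx} e^{it'\tau}\, d\tau$, a direct computation yields
\[
\int_0^t S(t-t')F(t')\,dt' = \sum_n e^{inx}\int_\R \ft F(n,\tau)\,e^{-itn^4}\,\frac{e^{it(\tau+n^4)} - 1}{i(\tau+n^4)}\,d\tau.
\]
Now I would split the $\tau$-integral according to whether $|\tau+n^4| \geq T^{-1}$ or $|\tau+n^4| < T^{-1}$, since the singularity of the denominator at $\tau=-n^4$ is the only obstacle. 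On the region $|\tau+n^4|\geq T^{-1}$, I split the numerator as $e^{it(\tau+n^4)} - 1$: the first piece combines with $e^{-itn^4}$ to give a pure frequency $e^{it\tau}$ (multiplied by $\eta_{_T}$ this is directly estimable by Plancherel), while the $-1$ piece reduces to a linear evolution as in part (i). On the small-modulation region $|\tau+n^4|< T^{-1}$, I expand
\[
\frac{e^{it(\tau+n^4)} - 1}{i(\tau+n^4)} = \sum_{k\geq 1} \frac{(it)^{k}(\tau+n^4)^{k-1}}{k!},
\]
which is regular, and then use that multiplication by $\eta_{_T}(t)$ together with the Fourier bound on $(\tau+n^4)^{k-1}$ on the small region produces the power $T^{1-b+b'}$ after summing the series; here the condition $b \leq b'+1$ is what keeps the geometric series convergent and the condition $b' > -\frac12$ ensures the relevant $L^2_\tau$ integrals converge near $\tau = -n^4$.

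The main obstacle is the bookkeeping on the small-modulation region: one must verify that after multiplying by $\eta_{_T}$, taking the space-time Fourier transform, and weighting by $\jb{\tau+n^4}^b$, each term in the Taylor expansion contributes a bound proportional to $T^{1-b+b'}$ uniformly in $k$ and $n$, with a summable $k$-dependence. This is a classical computation going back to Bourgain and Kenig--Ponce--Vega; the constraint $-\tfrac12 < b' \leq 0 \leq b \leq b'+1$ is precisely what is needed for each of the splitting pieces to close. With these estimates in hand, combining the two regions and using the elementary identity $\|\eta_{_T} f\|_{X^{s,b}} \les \|f\|_{X^{s,b}}$ for $b \in [0,1]$ completes the proof.
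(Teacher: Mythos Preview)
Your sketch is correct and follows the classical argument. The paper itself does not supply a proof of this lemma: it simply states the estimates and refers the reader to \cite{BO93, KPV93, GTV, TAO}. Your outline---computing the space-time Fourier transform directly for (i), and for (ii) writing the Duhamel integral on the Fourier side, splitting according to $|\tau+n^4| \gtrless T^{-1}$, and Taylor-expanding on the low-modulation piece---is exactly the standard proof found in those references (see in particular \cite{GTV} and \cite[Lemma~2.11]{TAO}), so there is nothing to compare. One minor remark: what you call the ``elementary identity'' $\|\eta_{_T} f\|_{X^{s,b}} \lesssim \|f\|_{X^{s,b}}$ for $b\in[0,1]$ is itself a lemma requiring a short proof (boundedness of multiplication by $\eta_{_T}$ on $H^b_t$, obtained by interpolation between $b=0$ and $b=1$), but this is also standard and contained in the same references.
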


Next, we state  the $L^4$-Strichartz estimate.
\begin{lemma}\label{LEM:L4}
The following estimate holds:
\begin{align}
\| u \|_{L^4(\T\times \R)} \les \| u \|_{X^{0, \frac{5}{16}}}.
\label{L4}
\end{align}
	
\end{lemma}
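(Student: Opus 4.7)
The plan is to prove this $L^4$-Strichartz estimate by adapting Bourgain's classical Fourier-analytic approach to the fourth-order setting. First, the identity $\|u\|_{L^4(\T\times\R)}^2 = \|u^2\|_{L^2(\T\times\R)}$ together with Plancherel in $(x,t)$ reduces the inequality to controlling the convolution
\begin{equation*}
\ft{u^2}(k,\ell) \;=\; \sum_{n_1\in\Z}\int_\R \ft u(n_1,\tau_1)\, \ft u(k-n_1,\ell-\tau_1)\, d\tau_1
\end{equation*}
in $L^2_\ell \ell^2_k$. I would then dyadically decompose $u = \sum_{M\geq 1} u_M$ in modulation, where $\ft{u_M}$ is supported on the shell $\jb{\tau+n^4}\sim M$, and reduce the proof to the bilinear estimate
\begin{equation*}
\|u_{M_1} u_{M_2}\|_{L^2} \;\lesssim\; \min(M_1,M_2)^{1/2}\,\max(M_1,M_2)^{1/8}\,\|u_{M_1}\|_{L^2}\|u_{M_2}\|_{L^2}.
\end{equation*}
On the diagonal $M_1=M_2=M$ this yields $\|u_M^2\|_{L^2}\lesssim M^{5/8}\|u_M\|_{L^2}^2$, exactly matching $\|u_M\|_{X^{0,5/16}}^2$; a Schur-type summation with kernel $(\min/\max)^{3/16}$ then handles the off-diagonal terms and closes the argument.

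For the bilinear estimate, I would apply Cauchy--Schwarz in the $(n_1,\tau_1)$-variable to split $|\ft{u_{M_1}u_{M_2}}(k,\ell)|^2 \leq A(k,\ell)\cdot B(k,\ell)$, where $\sum_k \int_\R B(k,\ell)\,d\ell = \|u_{M_1}\|_{L^2}^2\|u_{M_2}\|_{L^2}^2$ and
\begin{equation*}
A(k,\ell) \;=\; \mathrm{meas}\big\{(n_1,\tau_1)\in\Z\times\R \,:\, \jb{\tau_1+n_1^4}\lesssim M_1,\ \jb{\ell-\tau_1+(k-n_1)^4}\lesssim M_2\big\}.
\end{equation*}
For each admissible $n_1$, the $\tau_1$-slice is an intersection of two intervals of total length $\lesssim \min(M_1,M_2)$, while subtracting the two modulation constraints forces the frequency constraint $|\ell+n_1^4+(k-n_1)^4|\lesssim M_1+M_2$. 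Consequently $A(k,\ell) \lesssim \min(M_1,M_2)\cdot N(k,\ell)$, and the bilinear estimate reduces to the uniform counting bound
\begin{equation*}
N(k,\ell) \;:=\; \#\big\{n_1\in\Z \,:\, |\ell + n_1^4 + (k-n_1)^4|\lesssim M_1+M_2\big\} \;\lesssim\; (M_1+M_2)^{1/4}.
\end{equation*}

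To establish this counting bound, I would substitute $m = n_1 - k/2$ and use the elementary identity $n_1^4+(k-n_1)^4 = 2m^4 + 3k^2 m^2 + k^4/8$, the same algebraic structure underlying the factorization in Lemma \ref{LEM:phase}. Since the polynomial $P(m) := 2m^4 + 3k^2 m^2$ is strictly monotone in $|m|$ with $P'(m) \sim \max(|m|^3, k^2|m|)$, each level set $\{m \,:\, |P(m)-A|\lesssim M\}$ is a union of at most two intervals whose total length is controlled by $M/|m|^3+1$ in the quartic regime $|m|\geq |k|$ and by $M/(k^2|m|)+1$ in the quadratic regime $|m|\leq |k|$; combining with the trivial bound $|m|\leq |k|$ on the quadratic piece, the total count is $\lesssim M^{1/4}$ uniformly in $k$ and $\ell$. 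The main obstacle is precisely this counting step: the improved bound $M^{1/4}$, as opposed to the $M^{1/2}$ bound that governs the quadratic phase of standard NLS, is exactly what produces the lower Strichartz threshold $5/16 < 3/8$ and reflects the enhanced dispersive smoothing of the biharmonic semigroup on $\T$; the key subtlety is that the naive bound $\sqrt{M}/|k|$ in the quadratic regime is only attained for $|k|\gtrsim M^{1/4}$, and once one incorporates the constraint $|m|\leq |k|$ on the quadratic piece, the worst case across all $k$ balances out precisely at the value $M^{1/4}$.
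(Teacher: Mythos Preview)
Your approach is correct and essentially identical to the paper's: both perform a dyadic decomposition in modulation, reduce via Plancherel and Cauchy--Schwarz to the bilinear estimate, and the crux is the counting bound $\#\{n_1 : n_1^4 + (k-n_1)^4 = A + O(M)\} \lesssim M^{1/4}$. The only difference lies in how this count is executed. The paper uses the algebraic identity
\[
\big((n_1-n_2)^2 + 3k^2\big)^2 \;=\; 8(n_1^4+n_2^4) + 8k^4, \qquad n_2 = k-n_1,
\]
so the constraint first confines $(n_1-n_2)^2 + 3k^2$ to an interval of length $O(M^{1/2})$, and then $n_1-n_2$ to $O(M^{1/4})$ points---two square roots in one clean step. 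Your monotonicity argument for $P(m)=2m^4+3k^2m^2$ reaches the same conclusion, but as written the bound ``$M/|m|^3+1$'' is ambiguous (it depends on the variable being counted) and the regime split $|m|\gtrless |k|$ is unnecessary: since $P$ is even with $P'(m)\geq 8m^3$ for $m>0$, the positive level set is an interval $[m_-,m_+]$ satisfying $2(m_+-m_-)m_+^3 \leq P(m_+)-P(m_-) \lesssim M$, so either $m_+\leq M^{1/4}$ trivially or $m_+-m_-\lesssim M/m_+^3 \leq M^{1/4}$. With that cleanup, your proof goes through exactly as the paper's.
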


\noi
Note that the value $b = \frac 5{16}$ in 
\eqref{L4} is sharp
in the sense that the estimate \eqref{L4}
fails for $b < \frac 5{16}$.\footnote{
Consider the function
\[ u_N(x, t) = \sum_{|n|\leq N} \int_{|\tau|\leq N^4} e^{i(nx + \tau t)} d\tau.\]

\noi
Namely, we have 
$\ft{u}_N(n, \tau) = \ind_N(n)\ind_{N^4}(\tau)$,
where $\ind_N$ is the characteristic function of the interval $[-N, N]$.
Then, a direct computation shows that $\| u_N\|_{L^4(\T\times\R)} \sim N^\frac{15}{4}$, 
while $\| u_N \|_{X^{0, b}}\sim N^{\frac{5}{2} + 4b}$,
showing the sharpness of \eqref{L4}.
}

\begin{proof}
We closely follow the argument for the $L^4$-Strichartz estimate
for the usual (second order) Schr\"odinger equation presented in \cite{TAO}.
Given dyadic $M \geq 1$, let $u_M$ be the restriction of $u$
onto the modulation size $\jb{\tau + n^4} \sim M$.
Then, 
it suffices to show that 
there exists $\eps > 0$ such that 
\begin{align}
\| u_M u_{2^m M} \|_{L^2_{x, t}}
\les 2^{-\eps m } M^\frac{5}{16}\|u_M\|_{L^2_{x, t}}
( 2^mM)^\frac{5}{16}\|u_{2^mM}\|_{L^2_{x, t}}
\label{L4a}
\end{align}

\noi
for any $M \in \N$ and $m \in \N \cup \{0\}$.

Indeed, assuming \eqref{L4a}, by Cauchy-Schwarz inequality, we have
\begin{align*}
\| u \|_{L^4(\T\times \R)}^2
& \les \sum_{M } \sum_{m \geq 0} 
\| u_M u_{2^m M} \|_{L^2_{x, t}}\notag\\
& \les 
 \sum_{M } \sum_{m \geq 0} 
2^{-\eps m } M^\frac{5}{16}\|u_M\|_{L^2_{x, t}}
( 2^mM)^\frac{5}{16}\|u_{2^mM}\|_{L^2_{x, t}}  \notag\\
& \les 
 \sum_{m \geq 0} 
2^{-\eps m } 
\bigg( \sum_{M }
M^\frac{5}{8}\|u_M\|_{L^2_{x, t}}^2\bigg)^\frac{1}{2}
\bigg(
 \sum_{M }( 2^mM)^\frac{5}{8}\|u_{2^mM}\|_{L^2_{x, t}}^2\bigg)^\frac{1}{2}\\
 & \les
 \| u \|_{X^{0, \frac{5}{16}}}^2.
\end{align*}
	
\noi
This proves \eqref{L4}.

Hence, it remains to prove \eqref{L4a}.
By Plancherel's identity and H\"older's inequality,  we have
\begin{align}
\text{LHS of }\eqref{L4a}
& = \bigg\|\sum_{n = n_1 + n_2} \intt_{\tau = \tau_1 + \tau_2}
\ft {u_M}(n_1, \tau_1) \ft {u_{2^m M}} (n_2, \tau_2) d\tau_1\bigg\|_{\l^2_n L^2_\tau}\notag\\
& \leq 
\sup_{n, \tau} A(n, \tau)^\frac{1}{2}
\cdot 
\|u_M\|_{L^2_{x, t}}
\|u_{2^mM}\|_{L^2_{x, t}}
\label{L4b}, 
\end{align}

\noi
where $A(n, \tau)$ is defined by  
\[
A(n, \tau) 
= \sum_{n = n_1 + n_2} \intt_{\tau = \tau_1 + \tau_2} 
\ind_{\tau_1 +n_1^4 = O(M), \,
\tau_2 +n_2^4 = O(2^mM)}\, 
d\tau_1 .
\]
	
\noi	
Integrating in $\tau_1$, we have
\begin{align}
A(n, \tau) 
\les M  \sum_{n = n_1 + n_2} 
\ind_{\tau = - n_1^4  - n_2^4 + O(2^mM)}.
\label{L4c}
\end{align}

\noi
Under $n = n_1 + n_2$ and 
$\tau = - n_1^4  - n_2^4 + O(2^mM)$,  
we have
\begin{align*}
\big( (n_1 - n_2)^2 + 3 n^2 \big)^2 = 8 (n_1^4 + n_2^4) + 8 n^4
= -8 \tau + 8 n^4 +  O(2^mM).
\end{align*}

\noi
This implies that 
$(n_1 - n_2)^2 + 3 n^2$ belongs to at most two intervals of size $ O(2^\frac{m}{2}M^\frac{1}{2})$, 
i.e.
\[(n_1 - n_2)^2 + 3 n^2 = C_{j, \tau, n} + O(2^\frac{m}{2}M^\frac{1}{2})\]

\noi
for some $C_{j, \tau, n}$, $j = 1, 2$.
This, in turn, implies that $n_1-n_2$ belongs
to at most four intervals of size $O(2^\frac{m}{4}M^\frac{1}{4})$.
Hence, from \eqref{L4c}, we have
\begin{align}
A(n, \tau) ^\frac{1}{2}
\les 2^\frac{m}{8}M^\frac{5}{8}
\leq  2^{-\frac{3}{16}m}M^{\frac{5}{16}} (2^m M)^{\frac{5}{16}}.
\label{L4d}
\end{align}

\noi
Finally, \eqref{L4a} follows from \eqref{L4b} and \eqref{L4d}.
\end{proof}

Now, we are ready to prove Proposition \ref{PROP:LWP}.

\begin{proof}[Proof of Proposition \ref{PROP:LWP}]


Let $u_0 \in L^2(\T)$.
Given $0< T  \leq 1$, let 
\begin{align}
\G(u) (t)  = \G_{u_0}(u) (t) : = \eta(t) S(t) u_0 \mp i \eta_{_T}(t) \int_0^t S(t - t') |u|^2 u(t') dt'.
\label{NLS3}
\end{align}

\noi
Let $b >\frac 12  $ and small $\dl > 0$.  Then, 	
from Lemma \ref{LEM:lin1}, a duality argument, H\"older's inequality,  and Lemma \ref{LEM:L4}, we have 
\begin{align}
\| \G(u)  \|_{X^{0,b}} 
& \les \| u_0 \|_{L^2} + T^\dl \big\|  |u|^2 u \big\|_{X^{0, b - 1+\dl}} \notag \\
& = \| u_0 \|_{L^2} + T^\dl \sup_{\|v\|_{X^{0, 1-b-\dl} = 1 }} \bigg| \int |u|^2 u \cdot v \, dx dt\bigg|\notag \\
& \leq \| u_0 \|_{L^2} + T^\dl \sup_{\|v\|_{X^{0, 1-b-\dl} = 1 }} \|u\|^3_{L^4_{x, t}} \| v\|_{L^4_{x, t}}\notag \\
& \les \| u_0 \|_{L^2} +  T^\dl  \|u\|^3_{X^{0, \frac 5 {16}}}, 
\label{LWP1}
\end{align}

\noi
as long as 
$ b \leq \frac{11}{16} - \dl$.
Similarly, we have 
\begin{align}
\| \G(u) - \G(v)   \|_{X^{0,b}} 
\les  T^\dl \big( \|u\|^2_{X^{0, b}} +   \|v\|^2_{X^{0, b}}\big)  \|u - v\|_{X^{0, b}}.
\label{LWP2}
\end{align}

\noi
Hence, it follows from \eqref{LWP1} and \eqref{LWP2} that 
$\G$ is a contraction on some ball in $X^{0, b}$
as long as $T = T(\| u_0\|_{L^2}) > 0$ is sufficiently small.

Now, suppose that $u_0 \in H^s(\T)$ for some  $s > 0$. Then, proceeding as in \eqref{LWP1}
with  $T = T(\| u_0\|_{L^2}) > 0$ as above, 
 we have 
\begin{align}
\|\G(u) \|_{X^{s, b}} 
& \les \| u_0 \|_{H^s} +  T^\dl  \|u\|^2_{X^{0, b}}  \|u\|_{X^{s, b}}
\les \| u_0 \|_{H^s} +  T^\dl  \|u_0\|_{L^2}^2  \|u\|_{X^{s, b}}, 
\label{LWP3}
\end{align}

\noi
yielding \eqref{LWP1a}.
A similar argument yields local Lipschitz dependence of the solution map
on $H^s(\T)$.
This completes the proof of Proposition \ref{PROP:LWP}.
\end{proof}

\begin{remark}\label{REM:growth} \rm
When $s = 0$, the conservation of mass
yields $\|u(t) \|_{L^2} = \|u_0 \|_{L^2}  $ for all $t \in \R$.

Now, suppose that $s>0$.
Then, by iterating 
\eqref{LWP1a}
along with the mass conservation,
we conclude that there exists $\theta > 0$ such that 
the following growth estimate on the $H^s$-norm holds:
\begin{align}
\sup_{t \in [0, \tau]}\|u(t) \|_{H^s} 
& \leq C^{K^\theta \tau} \| u_0 \|_{H^s} 
\label{LWP4}
\end{align}

\noi
for any $\tau > 0$ and all $u_0 \in H^s(\T)$ with $\|u_0 \|_{L^2}\leq K$.

\end{remark}

\subsection{Ill-posedness below $L^2(\T)$}
\label{SUBSEC:illposed}

In the following, we briefly discuss the ill-posedness of \eqref{NLS1}
below $L^2(\T)$.
We first present the following failure of uniform continuity of the solution map on bounded sets below $L^2(\T)$.

\begin{lemma}\label{LEM:ill}
Let $s < 0$.
There exist two sequences $\{u_{0, n}\}_{n \in \N}$ and $\{\wt u_{0, n}\}_{n \in \N}$
in $H^\infty(\T)$ 
such that 
\begin{itemize}

\item[\textup{(i)}]  
$u_{0, n}$ and $\wt u_{0, n}$ are uniformly bounded in $H^s(\T)$,

\smallskip

\item[\textup{(ii)}]  
$\displaystyle \lim_{n \to \infty} 
\|u_{0, n}- \wt u_{0, n}\|_{H^s} = 0$,

\smallskip

\item[\textup{(iii)}]  
Let $u_n$ and $\wt u_n$ be the solutions
to \eqref{NLS1} with initial data
$u_n|_{t = 0} =u_{0, n}$
and  
$\wt u_n|_{t = 0} =\wt u_{0, n}$, respectively.
Then, 
there exists $c > 0$ such that 
\[ \liminf_{n \to \infty}
\| u_n - \wt u_n\|_{L^\infty([-T, T]; H^s)} \geq c > 0\]

\noi
for any $T > 0$.

\end{itemize}

\end{lemma}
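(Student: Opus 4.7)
My strategy is to exhibit exact plane-wave solutions to \eqref{NLS1} and exploit the amplitude-dependent phase produced by the cubic nonlinearity. For any $a \in \R$ and $n \in \Z$, a direct computation shows that
\[ u_{a,n}(x,t) := a\, e^{i\bigl(nx - (n^4 + a^2)t\bigr)} \]
is a smooth solution of \eqref{NLS1} with the $+$ sign (the focusing case is identical with obvious sign adjustments). Indeed, $i \dt u_{a,n} = (n^4+a^2) u_{a,n}$, while $\dx^4 u_{a,n} + |u_{a,n}|^2 u_{a,n} = (n^4+a^2) u_{a,n}$. The crucial point is that, since $s<0$, forcing $\|u_{a,n}\|_{H^s} = |a|\jb{n}^s$ to be of order one requires the large amplitude $a \sim \jb{n}^{-s}$, which in turn drives the nonlinear phase at the huge rate $a^2 \sim \jb{n}^{-2s}$. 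Varying $a$ slightly then causes complete decoherence on arbitrarily short time scales.

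Concretely, for $s<0$ and $n\ge 1$, I would set
\[ u_{0,n}(x) := n^{-s}\, e^{inx}, \qquad \wt u_{0,n}(x) := (n^{-s}+1)\, e^{inx}, \]
both smooth on $\T$. Writing $a_n := n^{-s}$ and $\wt a_n := n^{-s}+1$, the bounds $\|u_{0,n}\|_{H^s} = n^{-s}\jb{n}^s \to 1$ and $\|\wt u_{0,n}\|_{H^s} \to 1$ give~(i), while $\|u_{0,n} - \wt u_{0,n}\|_{H^s} = \jb{n}^s \to 0$ gives~(ii). The corresponding exact solutions $u_n := u_{a_n,n}$ and $\wt u_n := u_{\wt a_n,n}$ satisfy
\[ (u_n - \wt u_n)(x,t) = e^{i(nx - (n^4+a_n^2)t)}\bigl[ a_n - \wt a_n\, e^{-i(\wt a_n^2 - a_n^2)t}\bigr]. \]

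To prove~(iii) for a given $T>0$, I would choose the decoherence time
\[ t_n := \frac{\pi}{\wt a_n^2 - a_n^2} = \frac{\pi}{2n^{-s} + 1}. \]
Since $s<0$, we have $t_n \to 0$, hence $t_n \in [-T,T]$ for all sufficiently large $n$. At $t=t_n$ the bracket collapses to $a_n + \wt a_n = 2n^{-s}+1$, so
\[ \bigl\|(u_n - \wt u_n)(t_n)\bigr\|_{H^s} = (a_n + \wt a_n)\, \jb{n}^s \longrightarrow 2, \]
which yields~(iii) with, e.g., $c = 1$. The only subtle step in the plan is verifying that $t_n \in [-T,T]$ for any fixed $T>0$; this works precisely because $s<0$ makes the nonlinear phase rate $a_n^2 \sim n^{-2s}$ blow up faster than $\jb{n}^{-s}$, and it is exactly this mechanism that fails at $s=0$, consistent with the $L^2$ well-posedness established in Proposition~\ref{PROP:LWP}.
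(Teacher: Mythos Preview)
Your proposal is correct and follows essentially the same approach as the paper: both construct explicit plane-wave solutions $a\,e^{i(Nx - N^4 t \mp |a|^2 t)}$ with amplitudes of order $N^{-s}$ and exploit the resulting amplitude-dependent nonlinear phase to achieve decoherence at times $t_n \to 0$. The only cosmetic difference is that the paper perturbs the normalized amplitude by $1/n$ (writing $a = N_n^{-s}$ and $\wt a = N_n^{-s}(1+\tfrac{1}{n})$ with an auxiliary parameter $N_n$), whereas you perturb the raw amplitude by $1$; both choices work for the same reason.
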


Lemma \ref{LEM:ill}
exhibits 
 a ``mild'' ill-posedness result for $s< 0$.
The proof of Lemma \ref{LEM:ill}
closely follows the argument in 
Burq-G\'erard-Tzvetkov \cite{BGT1}
and Christ-Colliander-Tao \cite{CCT1}.

\begin{proof}
Given $N \in \N$ and $a \in \C$, 
define $u^{(N, a)}$ by 
\begin{align*}
 u^{(N, a)}(x, t) = N^{-s} a e^{i(Nx - N^4 t \mp N^{-2s} |a|^2 t)}.
 \end{align*}

\noi
Then, it is easy to see that $u^{(N, a)}$ is a smooth global solution to \eqref{NLS1}.

Given $n \in \N$, let
$u_{0, n} = u^{(N_n, 1)}(0)$
and $\wt u_{0, n} = u^{(N_n, 1 + \frac 1 n)}(0)$,
where $N_n \in \N $ is to be chosen later.
Then,  we have 
\begin{align}
\|u_{0, n}\|_{H^s} ,  \|\wt u_{0, n}\|_{H^s} \les 1
\label{ill0}
\end{align}
	
\noi
uniformly in $n \in \N$.
Moreover,  we have
\begin{align}
\|u_{0, n} - \wt u_{0, n}\|_{H^s} \sim \frac 1n.
\label{ill1}
\end{align}

\noi
Note that \eqref{ill0} and \eqref{ill1}
hold independently of a choice of $N_n \in \N$.

Let $u_n$ and $\wt u_n$ be the solutions
to \eqref{NLS1} with initial data
$u_n|_{t = 0} =u_{0, n}$
and  
$\wt u_n|_{t = 0} =\wt u_{0, n}$, respectively.
Namely, 
$u_{ n} = u^{(N_n, 1)}$
and $\wt u_{n} = u^{(N_n, 1 + \frac 1 n)}$.
Given  $n \in \N$, 
define $t_n > 0$ by 
\begin{align*}
t_n = 
\frac{\pi N_n^{2s}}{ \big(1+\tfrac 1n\big)^2 - 1} .
\end{align*}

\noi
Since $s < 0$, we can choose $N_n \in \N$ sufficiently large
such that  $t_n \leq \frac 1 n$.
Then, 
we have
\begin{align}
\|u_{n}(t_n) - \wt u_{n}(t_n)\|_{H^s} 
= \Big|  e^{\mp i N_n ^{-2s}\{1 - (1+\frac 1n)^2\} t_n} - \big(1 + \tfrac 1n\big)\Big|
= 2 + \tfrac 1n \geq 2.
\label{ill3}
\end{align}

\noi
Noting that $t_n \to 0$ as $ n \to \infty$, 
Lemma \ref{LEM:ill} follows from \eqref{ill0}, \eqref{ill1}, and \eqref{ill3}.
\end{proof}

\begin{remark}\rm
The cubic NLS \eqref{cubicNLS} enjoys the Galilean symmetry, 
which preserves the $L^2$-norm.
Namely, $L^2$ is critical with respect to the Galilean symmetry.
Indeed,  the cubic NLS is known to be ill-posed
below $L^2(\T)$.
See \cite{BGT1, CCT1, CCT2, MOLI, GO}.

As for the fourth order NLS \eqref{NLS1}, 
there seems to be no Galilean symmetry\footnote{Here, the Galilean symmetry means basically a translation in the spatial frequency domain
with a certain modulation.
While this modulation is linear in the spatial frequency
for the cubic NLS \eqref{cubicNLS}, 
such a modulation for \eqref{NLS1} is of higher degree 
for \eqref{NLS1} in order to match up with $\dx^4$,
which is inconsistent with the nonlinearity.}
and it is not clear why the regularity $s = 0$ plays a role as a critical value.

\end{remark}

\begin{remark}[non-existence of solutions below $L^2(\T)$]
\rm
The mild ill-posedness of \eqref{NLS1} stated in Lemma \ref{LEM:ill} 
can be updated to the following strong form of ill-posedness
of \eqref{NLS1} below $L^2(\T)$.
Roughly speaking, 
if $u_0 \notin L^2(\T)$, 
then 
there is no weak solution to \eqref{NLS1}.
More precisely,
there exists $s_0 < 0$ such that,  
for $s_0 < s < 0$ and  any $T>0$, 
there exists no weak solution $u \in C([-T, T];H^s(\T))$ to 
 NLS \eqref{NLS1}
such that

\begin{itemize}
\item[\textup{(i)}] $u|_{t = 0} = u_0 \in H^s(\T)\setminus L^2(\T)$

\smallskip

\item[\textup{(ii)}] There exist smooth solutions $\{u_n\}_{n\in \mathbb{N}}$ to \eqref{NLS1} such that 
$u_n \to u$ in $ C([-T, T];H^s(\T))$ as $n \to \infty$. 
\end{itemize}

\noi
Note that this is one of the strongest forms of ill-posedness.

In the following, we present a sketch  of the argument.
See \cite{GO, OW} for details.
Namely, 
first
use the short time Fourier restriction norm method and 
establish an a priori bound in $H^s$, $s< 0$, 
to the renormalized equation \eqref{NLS4}.
Here, the main observation is that 
Lemma \ref{LEM:phase}
guarantees that the a priori bound for the renormalized cubic NLS
also holds for the renormalized fourth order NLS \eqref{NLS4}.\footnote{In fact, 
one can establish an a priori bound for \eqref{NLS4}
for lower regularities. We, however, do not pursue this issue here.
See \cite{OW}.}
This allows us to 
prove an existence result for \eqref{NLS4} in $H^s(\T)$, 
$s_0 < s < 0$, for some $s_0 < 0$.
Recall that if $u$ is a smooth solution to \eqref{NLS1}, 
then 
$\wt u = \mathcal{G}[u]$
 is a smooth solution to \eqref{NLS4}.

Now, let $u_0 \in H^s(\T)\setminus L^2(\T)$, $s \in (s_0, 0)$
and let $\{u_{0, n}\}_{n \in \N} \subset L^2(\T)$
such that $u_{0, n} \to u_0$ in $H^s(\T)$ as $n \to \infty$.
Let $u_n$ denote the unique (global) solution 
to \eqref{NLS1} with $u_n|_{t = 0} = u_{0, n}$
and let $\wt u_n = \mathcal G [u_n]$.
Then, from the a priori bound, there exists $T = T(\|u_0\|_{H^s})>0$
such that 
(i) $\{\wt u_n\}_{n \in \N}$ is bounded in $C([-T, T]; H^s)$
and (ii) $\wt u_n$ converges to some $\wt u$  in $C([-T, T]; H^s)$.
Moreover, $\wt u $ is a solution to \eqref{NLS4}.
In particular, $\wt u(0) = u_0$.
On the other hand, 
in view of  $\| u_n (0)\|_{L^2} \to \infty$ as $n \to \infty$, 
we have faster and faster phase oscillations in \eqref{gauge2},
as $n \to \infty$.
Hence,  $\wt u_n = \mathcal{G}[ u_n]$ converges to 0
in $\mathcal{D}'(\T\times [-T, T])$.
In particular, this implies $\wt u(0) = 0$.
This is clearly a contradiction since $\wt u(0) = u_0 \notin L^2(\T)$.
\end{remark}

\section{On the approximation property of the truncated dynamics}
\label{SEC:approx}

In this appendix, we perform further analysis on 
the equation \eqref{NLS5}
and its truncated approximation \eqref{NLS6}
and establish a certain approximation property.
See Proposition \ref{PROP:approx} below.

Given $N \in \N$, we first consider the following approximation to \eqref{NLS1}:
\begin{align}
\begin{cases}
i \dt u_N =   \dx^4 u_N  -  \P_{\leq N} (|\P_{\leq N}u_N|^{2}\P_{\leq N}u_N) \\
u_N|_{t = 0} = u_0.
\end{cases}
\label{ANLS1}
\end{align}

\noi
We first study the approximation property of \eqref{ANLS1} to \eqref{NLS1}.
By a slight modification of the proof of Proposition \ref{PROP:LWP}, 
it is easy to see that \eqref{ANLS1} is globally well-posed in $L^2(\T)$.

Let $\Phi(t)$ and $\Phi_N(t)$ be the solution maps
to \eqref{NLS1} and \eqref{ANLS1}, respectively.
Given $R > 0$, 
let $B_R $ be the ball of radius $R$ centered at the origin in $L^2(\T)$.
Let  $\tau > 0$.
By iterating the local-in-time argument (see \eqref{LWP3}), 
we have the following uniform estimate:
\begin{align}
\sup_{N\in \N\cup\{\infty\}} \sup_{u_0 \in B_R} 
\|\Phi_N(t)(u_0) \|_{X^{0, b}([0, \tau])}   \leq C(\tau, R)
\label{C1}
\end{align}

\noi
for some $b > \frac 12$, with the understanding that $\Phi_\infty(t) = \Phi(t)$.
Then, from Lemma \ref{LEM:L4}, we obtain 
\begin{align}
\sup_{N\in \N\cup\{\infty\}} \sup_{u_0 \in B_R} 
\|\Phi_N(t)(u_0) \|_{L^4_t([0, \tau]; L^4_x)}   \leq C(\tau, R).
\label{C2}
\end{align}

\begin{lemma}\label{LEM:Bcontrol}
Given $R > 0$, let $A \subset B_R$ be a compact subset in $L^2(\T)$.
Given $\tau > 0$ and $\eps >0$, 
there exists $N_0 \in \N$ such that we have
\begin{align}
\| \P_{>N} \Phi(t)(u_0)\|_{L^4([0, \tau]; L^4_x)} < \eps
\label{Bcontrol1}
\end{align}

\noi
for all $u_0 \in A$ and $N \geq N_0$.
	
\end{lemma}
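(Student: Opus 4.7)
The plan is to combine a compactness-plus-density argument with the $L^p$-boundedness (and convergence) of the Dirichlet projections $\P_{\leq N}$ on $L^p(\T)$ for $1<p<\infty$. The key observation is that while $\|\P_{>N} f\|_{L^4}\to 0$ as $N\to\infty$ holds for every individual $f\in L^4(\T)$ (by the classical M.~Riesz theorem and density of trigonometric polynomials), we need this convergence to be \emph{uniform} in $u_0\in A$. Compactness of $A$ in $L^2(\T)$ together with continuity of the solution map into $L^4_{t,x}$ will yield such uniformity.

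First, I would upgrade the uniform bound \eqref{C1}--\eqref{C2} to a continuity statement: the map
\[
F: u_0 \in L^2(\T) \longmapsto \Phi(\cdot)(u_0)\big|_{[0,\tau]} \in L^4\bigl([0,\tau]\times\T\bigr)
\]
is continuous on $B_R$. On a short time interval this is immediate from the contraction argument in Proposition~\ref{PROP:LWP} (iterating \eqref{LWP2}), which gives a Lipschitz bound in $X^{0,b}$, and hence in $L^4_{t,x}$ via Lemma~\ref{LEM:L4}. Iterating on a partition of $[0,\tau]$ of size determined by $R$ (using the uniform bound \eqref{C1}), one obtains continuity of $F$ on $B_R$ into $L^4([0,\tau]\times\T)$. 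In particular, since $A\subset B_R$ is compact in $L^2(\T)$, the image $F(A)$ is compact in $L^4_{t,x}$.

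Next, given $\eps>0$, by compactness of $F(A)$ there exist finitely many points $u_0^{(1)},\dots,u_0^{(k)}\in A$ such that for every $u_0\in A$ one can find $j\in\{1,\dots,k\}$ with
\[
\bigl\| F(u_0) - F(u_0^{(j)}) \bigr\|_{L^4_{t,x}} < \tfrac{\eps}{2C_0},
\]
where $C_0$ denotes the operator norm of $\P_{>N}$ on $L^4(\T)$, which is bounded uniformly in $N$ by the $L^p$-boundedness of the Dirichlet projection. Then
\[
\|\P_{>N} F(u_0)\|_{L^4_{t,x}} \leq C_0\bigl\| F(u_0) - F(u_0^{(j)}) \bigr\|_{L^4_{t,x}} + \bigl\|\P_{>N} F(u_0^{(j)})\bigr\|_{L^4_{t,x}} < \tfrac{\eps}{2} + \bigl\|\P_{>N} F(u_0^{(j)})\bigr\|_{L^4_{t,x}}.
\]
For each fixed $j$ the function $F(u_0^{(j)})\in L^4([0,\tau]\times\T)$, so $\|\P_{>N}F(u_0^{(j)})\|_{L^4_x}\to 0$ pointwise in $t$, dominated by $C\|F(u_0^{(j)})(t)\|_{L^4_x}\in L^4_t$, and dominated convergence yields $\|\P_{>N}F(u_0^{(j)})\|_{L^4_{t,x}}\to 0$ as $N\to\infty$. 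Since there are only finitely many $j$, one can pick $N_0$ such that the second term is $<\eps/2$ for all $j$ and all $N\geq N_0$, completing the proof.

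The main obstacle I foresee is the first step, namely obtaining continuity of $F:(L^2,\|\cdot\|_{L^2})\to L^4([0,\tau]\times\T)$ on $B_R$ for arbitrary $\tau>0$. The short-time contraction argument is standard, but one must patch local-in-time Lipschitz estimates across a partition of $[0,\tau]$ whose mesh depends on $R$ through the $L^2$-conservation. Once this continuity is in hand, the compactness-plus-finite-net reduction and the uniform $L^4$-boundedness of the Dirichlet projection are routine.
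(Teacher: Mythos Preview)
Your proposal is correct and follows essentially the same approach as the paper: use continuity of the solution map $u_0\mapsto \Phi(\cdot)(u_0)$ into $L^4_{t,x}$ to deduce that the image of the compact set $A$ is compact, cover by a finite $\eps$-net, and then apply dominated convergence at each of the finitely many net points. Your version is in fact slightly more careful than the paper's, since you explicitly track the uniform $L^4$-operator norm $C_0$ of $\P_{>N}$ in the triangle-inequality step, whereas the paper simply combines the two $\eps/2$ bounds without comment.
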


\begin{proof}
By the continuity of the map: $u_0 \in L^2 \mapsto \Phi(t) u_0 \in 
L^4([0, \tau]; L^4_x)$
and the compactness of $K$, we see that $\Phi(t) K$ is compact
in $L^4([0, \tau]; L^4_x)$.
Hence, there exists a finite index set $\mathcal{J}$ 
and $\{u_{0, j}\}_{j \in \mathcal{J}} \subset K$
such that, given $u_0 \in K$,   we have
\begin{align}
 \| \Phi(t) (u_0) - \Phi(t) (u_{0, j})\|_{L^4([0, \tau]; L^4_x)} <\frac{\eps}{2}
 \label{Bcontrol2}
\end{align}

\noi
for some $j \in \mathcal{J}$.
It follows from \eqref{C2}
and the dominated convergence theorem
that given $ j \in \mathcal{J}$, there exists
$N_j \in \mathbb{N}$ such that 
\begin{align}
\| \P_{>N} \Phi(t)(u_{0, j})\|_{L^4([0, \tau]; L^4_x)} < \frac \eps2
\label{Bcontrol3}
\end{align}

\noi
for all $N \geq N_j$.
Hence, by setting $N_0 = \max_{j \in \mathcal{J}} N_j$, 
\eqref{Bcontrol1} follows from 
\eqref{Bcontrol2} and 
\eqref{Bcontrol3}. 
\end{proof}

We first
establish  the following approximation property of \eqref{ANLS1} to \eqref{NLS1}.

\begin{lemma}\label{LEM:Bapprox}
Given $R > 0$, let $A \subset B_R$ be a compact set in $L^2(\T)$.
Then, for any $\tau > 0$ and $\eps > 0$, 
there exists $N_0 \in \N$ such that 
\begin{align}
\|  \Phi(t)(u_{0}) -  \Phi_N(t)(u_{0})\|_{L^\infty_t([0, \tau]; L^2_x)} < \eps.
\label{Bapprox1}
\end{align}

\noi
for all $u_0 \in A$ and $N \geq N_0$.

\end{lemma}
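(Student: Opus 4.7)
The plan is to run a difference estimate in an $X^{0,b}$ restriction space (with $b>\tfrac12$) on short time intervals and iterate to $[0,\tau]$. Set $u(t)=\Phi(t)(u_0)$, $u_N(t)=\Phi_N(t)(u_0)$, and $w=u-u_N$; by mass conservation for both \eqref{NLS1} and \eqref{ANLS1} together with the uniform a priori bound \eqref{C1}, both $u$ and $u_N$ lie in a ball of radius $C(\tau,R)$ in $X^{0,b}([0,\tau])$, with the same bound uniform for $u_0\in A$. Choose $T=T(R)>0$ small enough that the $T^{\delta}R^{2}$ prefactor from the local trilinear $X^{0,b}$ estimate (as in the proof of Proposition~\ref{PROP:LWP}) is less than $\tfrac12$, and split $[0,\tau]$ into $K\sim \tau/T$ subintervals.

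On a subinterval $[kT,(k+1)T]$, Lemma~\ref{LEM:lin1} applied to the Duhamel formula for $w$ gives
\begin{align*}
\|w\|_{X^{0,b}([kT,(k+1)T])}
\les \|w(kT)\|_{L^{2}} + T^{\delta}\bigl\|F(u)-F_{N}(u_{N})\bigr\|_{X^{0,b-1+\delta}([kT,(k+1)T])},
\end{align*}
where $F$ and $F_N$ denote the nonlinearities of \eqref{NLS1} and \eqref{ANLS1}. I would split
\begin{align*}
F(u)-F_{N}(u_{N}) = \P_{>N}\bigl(|u|^{2}u\bigr) + \P_{\leq N}\bigl(|u|^{2}u-|\P_{\leq N}u_{N}|^{2}\P_{\leq N}u_{N}\bigr).
\end{align*}
For the second piece, a difference-of-cubes manipulation together with the trilinear $X^{0,b}$ estimate (via duality with Lemma~\ref{LEM:L4}) bounds its $X^{0,b-1+\delta}$-norm by $C(\tau,R)^{2}\bigl(\|w\|_{X^{0,b}}+\|\P_{>N}u_N\|_{X^{0,b}}\bigr)$, using the identity $u-\P_{\leq N}u_{N}=w+\P_{>N}u_{N}$. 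For the first piece, the pigeonhole fact that $\P_{>N}(f_{1}\bar f_{2}f_{3})=0$ unless some $f_{j}$ has a Fourier mode of magnitude $>N/3$, combined again with duality and Lemma~\ref{LEM:L4}, yields
\begin{align*}
\bigl\|\P_{>N}(|u|^{2}u)\bigr\|_{X^{0,b-1+\delta}} \les \|u\|_{L^{4}_{t,x}}^{2}\,\|\P_{>N/3}u\|_{L^{4}_{t,x}}.
\end{align*}

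The two small factors are then handled uniformly on $A$. First, $\|\P_{>N/3}u\|_{L^{4}([0,\tau];L^{4}_{x})}\to 0$ uniformly for $u_0\in A$ is exactly Lemma~\ref{LEM:Bcontrol}. Second, applying $\P_{>N}$ to \eqref{ANLS1} annihilates the nonlinearity, so $\P_{>N}u_{N}(t)=S(t)\P_{>N}u_{0}$; hence $\|\eta_{T}\P_{>N}u_{N}\|_{X^{0,b}}\les \|\P_{>N}u_{0}\|_{L^{2}}$, which tends to $0$ uniformly on the compact set $A$ by a standard finite-net argument. Absorbing the $T^{\delta}R^{2}\|w\|_{X^{0,b}}$ term on the left, we obtain
\begin{align*}
\|w\|_{X^{0,b}([kT,(k+1)T])} \leq C\|w(kT)\|_{L^{2}} + \varepsilon_{N}^{(k)},
\end{align*}
with $\max_{k}\varepsilon_{N}^{(k)}\to 0$ uniformly for $u_0\in A$ as $N\to\infty$. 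Using the embedding $X^{0,b}\hookrightarrow C_{t}L^{2}_{x}$ for $b>\tfrac12$ and iterating starting from $w(0)=0$ over $k=0,\dots,K-1$ gives $\|w\|_{L^{\infty}([0,\tau];L^{2})}\leq C^{K}\max_{k}\varepsilon_{N}^{(k)}$, which is $o_{N}(1)$ uniformly on $A$; this is \eqref{Bapprox1}. The main obstacle is precisely the uniform smallness in $u_{0}\in A$ of $\|\P_{>N/3}u\|_{L^{4}_{t,x}}$; this uniformity is the content of Lemma~\ref{LEM:Bcontrol}, whose own proof relies on compactness of $\Phi(t)(A)$ in $L^{4}_{t,x}$ inherited from compactness of $A$ in $L^{2}$ and continuous dependence of the solution map.
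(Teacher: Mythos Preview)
Your proof is correct and follows essentially the same strategy as the paper: a short-time $X^{0,b}$ difference estimate iterated over $[0,\tau]$, with the smallness of the high-frequency remainders coming from Lemma~\ref{LEM:Bcontrol} (for $\P_{>N/3}u$ in $L^4_{t,x}$) and from compactness of $A$ (for $\P_{>N}u_0$ in $L^2$, using $\P_{>N}u_N=S(t)\P_{>N}u_0$). The only organizational difference is that the paper first estimates $u-\P_{\leq N}u_N$ and then treats $\P_{>N}u_N$ separately at the end, whereas you estimate the full difference $w=u-u_N$ directly by writing $u-\P_{\leq N}u_N=w+\P_{>N}u_N$ inside the trilinear term; the two routes are equivalent. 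One minor point: since the $X^{0,b}$ bounds on the subintervals come either from \eqref{C1} or from rerunning the local theory with data of mass $\leq R$, make sure your choice of $T$ is consistent with whichever bound you invoke (the paper takes $T=T(\tau,R)$).
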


\begin{proof}
Given $u_0 \in A$, 
let $w_N = u - \P_{\leq N} u_N = \Phi(t) (u_0)  -\P_{\leq N} \Phi_N(t)(u_0) $.
Then, $w_N$ satisfies
\begin{align*}
w_N(t) = - i \int_0^t S(t - t') \QQ (u, \P_{\leq N} u_N)(t') dt'.
\end{align*}
	
\noi
where $\QQ(u, \P_{\leq N}u_N)$ is defined by 
\begin{align*}
\QQ(u, \P_{\leq N}u_N) 
& = \P_{\leq N }\big( |u|^2 u - |\P_{\leq N} u|^2 \P_{\leq N} u\big)
+ 
\P_{> N } (|u|^2 u). 
\end{align*}

Given $T> 0$, let $\wt u$ and $\wt u_N$
be extensions of $u |_{[0, T]}$ and $\P_{\leq N} u_N |_{[0, T]}$ onto $\R$.
Then, defining $\wt w_N$ by 
\begin{align*}
\wt w_N(t) = - i \eta_{_T}(t) \int_0^t S(t - t') \QQ (\wt u, \wt  u_N)(t') dt', 
\end{align*}

\noi
we see that $\wt w_N$ is an extension of $w_N|_{[0, T]}$ onto $\R$.
Given small $\dl > 0$, let $ \frac 12 < b \leq \frac {11}{16}-\dl$ as in the proof of Proposition \ref{PROP:LWP}.
Then, by proceeding as in \eqref{LWP1}, we have
\begin{align}
 \| w_N \|_{X^{0, b} ([0, T])}
& \leq   \|\wt  w_N \|_{X^{0, b} (\T\times \R)}\notag\\
& \leq C T^\dl 
\big( \|\wt  u \|_{X^{0, \frac 5 {16}}}^2 
+ \|\wt  u_N  \|_{X^{0, \frac 5{16}}}^2\big)
\| \wt u -  \wt  u_N  \|_{X^{0, \frac 5{16}}} 
\notag\\
& \hphantom{XXXX}
+ C\| \wt u \|_{L^4_{x, t}}^2 \|\P_{> \frac N 3} \wt u \|_{L^4_{x, t}}
\label{Bapprox2}
\end{align}

\noi
for $T = T(R) > 0$ sufficiently small.
Noting that 
\eqref{Bapprox2} holds for any extensions $\wt u$ and $\wt u_N$
and that  $\wt u - \wt u_N$ is an extension of $w_N|_{[0, T]}$, 
we obtain 
\begin{align}
 \| w_N \|_{X^{0, b} ([0, T])}
& \leq C T^\dl 
\big( \| u \|_{X^{0, \frac 5 {16}}([0, T])}^2 
+ \| u_N  \|_{X^{0, \frac 5{16}}([0, T])}^2\big)
\| w_N \|_{X^{0, \frac 5{16}}([0, T])} \notag\\
& \hphantom{X}
+ C\| u \|_{L^4_t([0, T]; L^4_x)}^2 \|\P_{> \frac N 3}  u \|_{L^4_t([0, T]; L^4_x)}.
\label{Bapprox3}
\end{align}

\noi
By making $T = T(\tau, R)> 0$ sufficiently small, 
it follows
from \eqref{Bapprox3} with \eqref{C1}, 
that 
\begin{align*}
\| w_N \|_{L^\infty([0, T]; L^2)}
& \leq C  \| w_N \|_{X^{0, b} ([0, T])}
\leq C\| u \|_{L^4([0, T]; L^4_x)}^2 \|\P_{> \frac N 3}  u \|_{L^4_t([0, T]; L^4_x)}.
\end{align*}

\noi
Hence, by Lemma \ref{LEM:Bcontrol} with \eqref{C2}, 
\begin{align*}
\| w_N \|_{L^\infty([0, T]; L^2)}
 = o_N(1)
\end{align*}

\noi
as $N \to \infty$, uniformly in $u_0 \in A$.

By repeating the argument, we obtain 
\begin{align*}
 \| w_N \|_{X^{0, b} ([T, 2T])}
& \leq  o_N(1) + C T^\dl 
\big( \| u \|_{X^{0, \frac 5 {16}}([T, 2T])}^2 
+ \| u_N  \|_{X^{0, \frac 5{16}}([T, 2T])}^2\big)
\| w_N \|_{X^{0, \frac 5{16}}([T, 2T])} \notag\\
& \hphantom{X}
+ C\| u \|_{L^4([T, 2T]; L^4_x)}^2 \|\P_{> \frac N 3}  u \|_{L^4([T, 2T]; L^4_x)}.
\end{align*}

\noi
As before, this in turn implies
\begin{align*}
\| w_N \|_{L^\infty_t([T, 2T]; L^2_x)}
 = o_N(1)
\end{align*}

\noi
as $N \to \infty$, uniformly in $u_0 \in A$.
By arguing iteratively
on time intervals of length $T$, we can cover the whole time interval $[0, \tau]$
and we conclude that there exists $N_1 = N_1(\tau, \eps, R) \in \N$ such that 
\begin{align}
\|  \Phi(t) (u_0)  -\P_{\leq N} \Phi_N(t)(u_0)  \|_{L_t^\infty([0, \tau]; L^2_x)}
< \frac{\eps}{2}
 \label{Bapprox4}
\end{align}

\noi
for all $u_0 \in A$ and $N \geq N_1$.

It remains to control 
$\P_{> N} \Phi_N(t)(u_0) $.
Recall that the solution map $\Phi_N(t)$
to \eqref{ANLS1} is locally uniformly continuous.
Moreover, 
it follows from a slight modification of the proof of Proposition \ref{PROP:LWP}
that 
the modulus of continuity is
uniform in $N \in \N$. 
Hence, 
for any $\eps > 0$, there exists $\dl > 0$
such that 
if $u_0, u_1 \in B_R$ satisfies 
$\| u_0 - u_1\|_{L^2} < \dl $, 
then we have 
\[ \| \Phi_N(t) (u_0) - \Phi_N(t) (u_1)\|_{L^\infty_t([0, \tau]; L^2_x)} < \frac \eps 4\]

\noi
for all $N \in \N$.
By the compactness of $A$, we can cover $A$ by  finitely many 
ball of radius $\dl$ centered at $u_{0, j}$, $j = 1, \dots, J$
for some $J < \infty$
such that, given $u_0 \in A$,
there exists  $j \in \{1, \dots, J\}$ such that 
\begin{align}
 \| \Phi_N(t) (u_0) - \Phi_N(t) (u_{0, j})\|_{L^\infty_t([0, \tau]; L^2_x)} < \frac \eps 4
\label{Bapprox5}
\end{align}

\noi
for all $N \in \N$.

Noting that 
$\P_{> N} \Phi_N(t)(u_{0, j})  = S(t) \P_{> N} u_{0, j}$, 
there exists $N_2 \in \N$ such that 
\begin{align}
 \|\P_{> N} \Phi_N(t)(u_{0, j}) \|_{L^\infty_t([0, \tau]; L^2_x)} 
 =  \| S(t) \P_{> N} u_{0, j}\|_{L^\infty_t([0, \tau]; L^2_x)} 
=  \|  \P_{> N} u_{0, j}\|_{L^2_x}  
 < \frac \eps 4
\label{Bapprox6}
\end{align}

\noi
for all $N \geq N_2$ and $j = 1, \dots, J$.

Therefore, the desired estimate \eqref{Bapprox1} 
follows from 
\eqref{Bapprox4}, 
\eqref{Bapprox5}, 
and \eqref{Bapprox6}.
\end{proof}

Recall that, if $u$ is a solution to \eqref{NLS1}, 
then  
$v(t) = S(-t) \GG[u](t)$
is a solution to \eqref{NLS5}, 
where the gauge transformation $\GG$ is defined in \eqref{gauge2}.
Noting that  the truncated $L^2$-norm
$\int |\P_{\leq N} u|^2dx $ is conserved for \eqref{ANLS1}, 
define 
 $\GG_N$ by 
\begin{align}
 \GG_N[u](t) = e^{2i t \fint |\P_{\leq N} u|^2 }  u 
\label{B1}
 \end{align}

\noi
for a solution $u$ to \eqref{ANLS1}.
Then, letting 
\[ v  = S(-t) \GG_N[u](t),\] 

\noi
we see that $v$ is a solution to \eqref{NLS6}.
Recalling that $\Psi(t)= \Psi(t, 0)$ and $\Psi_N(t)= \Psi_N(t, 0)$ represent
the solution maps to \eqref{NLS5} and \eqref{NLS6}, respectively, 
we have
\begin{align}
\Psi(t) (u_0) = S(-t) \circ \GG\circ \Phi(t)(u_0)
\qquad \text{and}
\qquad 
\Psi_N(t) (u_0) = S(-t)\circ  \GG_N\circ \Phi_N(t)(u_0).
\label{B2}
\end{align}

We conclude this appendix by 
establishing the following approximation property
of \eqref{NLS6} to \eqref{NLS5}.
Lemma \ref{LEM:Bapprox}
and \eqref{B2} play an important role.

\begin{proposition}\label{PROP:approx}

Given $R>0$, let $A \subset B_R$ be a compact set in $L^2(\T)$.
Fix $t \in \R$.
Then, for any $\eps > 0$, there exists $N_0 = N_0(t, R, \eps) \in \N$ such that we have
\begin{align} 
\Psi(t) (A)\subset \Psi_N(t) (A + B_\eps)
\label{B3}
\end{align}

\noi
for all $N \geq N_0$.

\end{proposition}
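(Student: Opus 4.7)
The plan is to reduce the inclusion to the analogous approximation result for the original equation \eqref{NLS1} (Lemma \ref{LEM:Bapprox}) via the identity \eqref{B2}, and then invert the truncated flow. Concretely, given $u_0 \in A$, I will take as candidate preimage
\[
\wt u_0 := \Psi_N(t)^{-1}\big(\Psi(t)(u_0)\big),
\]
so that $\Psi_N(t)(\wt u_0) = \Psi(t)(u_0)$ automatically, and I will show $\|\wt u_0 - u_0\|_{L^2} < \eps$ for all sufficiently large $N$, uniformly for $u_0 \in A$. Since $u_0 \in A$, this yields $\wt u_0 \in A + B_\eps$, which is precisely \eqref{B3}.

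The first step will be to prove the uniform $L^2$-approximation
\[
\sup_{u_0 \in A} \big\|\Psi(t)(u_0) - \Psi_N(t)(u_0)\big\|_{L^2} \longrightarrow 0
\quad \text{as } N \to \infty.
\]
Using \eqref{B2} and the unitarity of $S(-t)$ on $L^2$, this $L^2$-difference equals $\big\|\GG[\Phi(\cdot)(u_0)](t) - \GG_N[\Phi_N(\cdot)(u_0)](t)\big\|_{L^2}$. Mass conservation for \eqref{NLS1} and \eqref{ANLS1}, combined with the formulas \eqref{gauge2} and \eqref{B1}, collapses the two gauge factors to multiplication of $\Phi(t)(u_0)$ and $\Phi_N(t)(u_0)$ by the fixed unimodular phases $e^{it\|u_0\|_{L^2}^2/\pi}$ and $e^{it\|\P_{\leq N} u_0\|_{L^2}^2/\pi}$, respectively. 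Factoring out the common phase and using $|e^{i\theta}-1|\leq |\theta|$ gives
\[
\big\|\Psi(t)(u_0) - \Psi_N(t)(u_0)\big\|_{L^2}
\leq \tfrac{|t|}{\pi}\, \|\P_{>N} u_0\|_{L^2}^2 \, R
+ \big\|\Phi(t)(u_0) - \Phi_N(t)(u_0)\big\|_{L^2}.
\]
The first term vanishes uniformly on the $L^2$-compact set $A$ (a standard consequence of compactness and the strong convergence $\P_{\leq N} \to \textup{Id}$), and the second vanishes uniformly on $A$ by Lemma \ref{LEM:Bapprox}.

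The second step is a uniform-in-$N$ Lipschitz bound for $\Psi_N(t)^{-1}$ on the ball $B_R$. Mass conservation for \eqref{NLS1} and \eqref{ANLS1}, together with the unitarity of $S(\pm t)$ and of each gauge factor, shows that both $\Psi(t)(u_0)$ and $\Psi_N(t)(u_0)$ have $L^2$-norm equal to $\|u_0\|_{L^2} \leq R$. The gauge transforms $\GG_N^{\pm 1}$ admit a direct Lipschitz bound on $B_R$ independent of $N$, since the exponent in \eqref{B1} is a quadratic functional of $u$ whose Lipschitz constant on $B_R$ is at most $2R$. Moreover, the contraction-mapping proof of Proposition \ref{PROP:LWP} adapts verbatim to \eqref{ANLS1}: the $L^4$-Strichartz estimate \eqref{L4} and the energy bound \eqref{LWP1} both depend only on the $L^2$-norm of the data, with no frequency cutoff entering the implicit constants. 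This yields Lipschitz continuity of $\Phi_N(\pm t)$ on $B_R$, uniformly in $N$, on a short interval $[0, \tau(R)]$; iterating $\lesssim |t|/\tau(R)$ times (mass conservation keeping the flow inside $B_R$) produces a Lipschitz constant $L = L(t, R)$ for $\Psi_N(t)^{-1}$ on $B_R$, independent of $N$.

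Combining the two steps,
\[
\|\wt u_0 - u_0\|_{L^2}
= \big\|\Psi_N(t)^{-1}(\Psi(t)(u_0)) - \Psi_N(t)^{-1}(\Psi_N(t)(u_0))\big\|_{L^2}
\leq L\, \big\|\Psi(t)(u_0) - \Psi_N(t)(u_0)\big\|_{L^2} < \eps
\]
for all $u_0 \in A$ once $N$ is large enough, which concludes the proof. I expect the second step to be the main technical point: the uniform-in-$N$ control of the inverse truncated flow requires carrying the short-time contraction estimates of Proposition \ref{PROP:LWP} through finitely many iterations, with all implicit constants tracked to depend only on the $L^2$-norm. Steps one and three are then essentially formal, relying only on mass conservation, the unitarity of $S$ and of the gauge factors, and Lemma \ref{LEM:Bapprox}.
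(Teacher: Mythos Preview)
Your argument is correct and follows the same skeleton as the paper's: set $\wt u_0 = \Psi_N(0,t)(\Psi(t)(u_0))$ and bound $\|\wt u_0 - u_0\|_{L^2}$ by reducing, via \eqref{B2} and mass conservation, to Lemma~\ref{LEM:Bapprox} together with the uniform decay of $\|\P_{>N}u_0\|_{L^2}$ on the compact set $A$. The one substantive difference is your Step~2. The paper passes from $\|\Psi(t)(u_0)-\Psi_N(t)(u_0)\|_{L^2}$ to $\|\wt u_0 - u_0\|_{L^2}$ by invoking ``the unitarity of $\Psi_N(0,t)$'', but since $\Psi_N(0,t)$ is nonlinear, mass conservation gives only norm preservation, not distance preservation, so that equality is not justified as written. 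You instead obtain a uniform-in-$N$ Lipschitz bound for $\Psi_N(t)^{-1}$ on $B_R$ by decomposing it as $\Phi_N(t)^{-1}\circ\GG_N^{-1}\circ S(t)$ and appealing to the $N$-independent well-posedness estimates for \eqref{ANLS1}; this is the correct way to close the step, and the paper itself already records the needed ingredient (uniform-in-$N$ modulus of continuity of $\Phi_N(t)$) inside the proof of Lemma~\ref{LEM:Bapprox}.
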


\begin{proof}
By writing $\Psi(t)(A)$ as 
\[ \Psi(t) (A) = \Psi_N(t)\big( \Psi_N(0, t) \Psi(t) (A)\big),\]

\noi
it suffices to show 
$ \Psi_N(0, t) \Psi(t) (A) \subset A + B_\eps$.
Given $w_N \in  \Psi_N(0, t) \Psi(t) (A)$, we have
$w_N =  \Psi_N(0, t) \Psi(t) (u_0)$ for $u_0 \in A$.
Thus, we can rewrite $w_N$ as 
$w_N =  u_0+ z_N$, 
where 
\[z_N : = \Psi_N(0, t)\big( \Psi(t)( u_0) -  \Psi_N(t)( u_0)\big).\]

By the unitarity of $\Psi_N(0, t)$, \eqref{B2}, 
and the unitarity of $S(-t)$, 
we have 
\begin{align*}
\|z_N \|_{L^2}
& = \| \Psi(t) (u_0) -  \Psi_N(t) (u_0)\|_{L^2}\\
& = \| \GG\circ \Phi(t) (u_0) -  \GG_N\circ\Phi_N(t) (u_0)\|_{L^2}.
\end{align*}
	
\noi
By the mean value theorem with 
\eqref{gauge2} and \eqref{B1}
followed by 
Lemma \ref{LEM:Bapprox}
and the unitarity of $\Phi(t)$,  we have 
\begin{align*}
\|z_N \|_{L^2}
& \leq 
\|  \Phi(t) (u_0) - \Phi_N(t) (u_0)\|_{L^2}
+ 
C \big( \| u_0 \|_{L^2}^2  - \| \P_{\leq N}u_0 \|_{L^2}^2 \big)\| \Phi(t) u_0\|_{L^2}\notag\\
& 
< \frac \eps 2 + 
C R^2 \| \P_{> N }u_0 \|_{L^2}
< \eps
\end{align*}

\noi
for all sufficiently large $N \gg1$, 
uniformly in $u_0 \in A \subset B_R$.
This proves \eqref{B3}.
\end{proof}

\begin{ackno}\rm
T.O.~was supported by the European Research Council (grant no.~637995 ``ProbDynDispEq'').
N.T.~was supported by the European Research Council (grant no.~257293 ``DISPEQ'').
T.O.~would like to thank Universit\'e de Cergy-Pontoise 
for its hospitality during his visit.
The authors are grateful to the anonymous referees
for their helpful comments that have improved the presentation of this paper.
\end{ackno}


\begin{thebibliography}{99}


\bibitem{AKNS}
M.~Ablowitz, D.~Kaup, D.~Newell, H.~Segur, 
{\it The inverse scattering transform-Fourier analysis for nonlinear problems}, Stud. Appl. Math. 53 (1974), 249--315.

\bibitem{AM}
M.~Ablowitz, Y.~Ma, 
{\it The periodic cubic Schr\"odinger equation,} Stud. Appl. Math. 65 (1981), 113--158.



\bibitem{AF}
L.~Ambrosio, A.~Figalli, 
{\it On flows associated to Sobolev vector fields in Wiener spaces: an approach \`a la DiPerna-Lions,} J. Funct. Anal. 256 (2009), no. 1, 179--214.

\bibitem{BIT} A.~Babin, A.~Ilyin, E.~Titi, {\it On the regularization mechanism for the periodic Korteweg-de Vries equation},
	Comm. Pure Appl. Math. 64 (2011), no. 5, 591--648.

\bibitem{BKS}
M.~Ben-Artzi, H.~Koch, J.C.~Saut, 
{\it Dispersion estimates for fourth order Schr\"odinger equations,}
 C. R. Acad. Sci. Paris Sér. I Math. 330 (2000), no. 2, 87--92. 




\bibitem{Bog}
V.~Bogachev, {\it Gaussian measures,} Mathematical Surveys and Monographs, 62. American Mathematical Society, Providence, RI, 1998. xii+433 pp.

\bibitem{BO93}
J.~Bourgain, 
{\it Fourier transform restriction phenomena for certain lattice subsets and applications to nonlinear evolution equations, I: Schr\"odinger equations,} Geom. Funct. Anal. 3 (1993), 107--156.


\bibitem{BO94}
J.~Bourgain, 
{\it Periodic nonlinear Schr\"odinger equation and invariant measures}, 
Comm. Math. Phys. 166 (1994), no. 1, 1--26.



\bibitem{BO96}
J.~Bourgain, 
{\it Invariant measures for the 2D-defocusing nonlinear Schr\"odinger equation}, 
Comm. Math. Phys. 176 (1996), no. 2, 421--445. 

\bibitem{BO97}
J.~Bourgain, 
{\it Invariant measures for the Gross-Piatevskii equation,} 
J. Math. Pures Appl. (9) 76 (1997), no. 8, 649--702.




\bibitem{BB1}
J.~Bourgain, A.~Bulut,
{\it Invariant Gibbs measure evolution for the radial nonlinear wave equation on the 3d ball}, J. Funct. Anal. 266 (2014), no. 4, 2319--2340.


\bibitem{BB3}
J.~Bourgain, A.~Bulut,
{\it
Almost sure global well posedness for the radial nonlinear Schr\"odinger equation on the unit ball II: the 3D case},
 J. Eur. Math. Soc. (JEMS) 16 (2014), no. 6, 1289--1325.



\bibitem{BGT1}
N.~Burq, P.~G\'erard, N.~Tzvetkov, {\it An instability property of the nonlinear Schr\"odinger equation on $S^d$}, Math. Res. Lett. 9 (2002), 323--335.



\bibitem{BTT}
N.~Burq, L.~Thomann, N.~Tzvetkov,
{\it Long time dynamics for the one dimensional non linear Schr\"odinger equation},
 Ann. Inst. Fourier (Grenoble).
 63 (2013), no. 6, p. 2137--2198. 




\bibitem{BTT1}
N.~Burq, L.~Thomann, N.~Tzvetkov, 
{\it Remarks on the Gibbs measures for nonlinear dispersive equations}, 
arXiv:1412.7499 [math.AP].


\bibitem{BTIMRN} 
N.~Burq, N.~Tzvetkov, 
{\it Invariant measure for a three dimensional nonlinear wave equation,} 
Int. Math. Res. Not. IMRN 2007, no. 22, Art. ID rnm108, 26 pp. 




\bibitem{BT1}
N.~Burq, N.~Tzvetkov,
{\it Random data Cauchy theory for supercritical wave equations. I. Local theory,}
Invent. Math. 173 (2008), no. 3, 449--475.


\bibitem{BT2}
N.~Burq, N.~Tzvetkov,
N.~Burq, N.~Tzvetkov, {\it Random data Cauchy theory for supercritical wave equations. II. A global existence result,}
Invent. Math. 173 (2008), no. 3, 477--496.


\bibitem{CM}
R.~Cameron, W.~Martin, 
{\it Transformations of Wiener integrals under translations},  Ann. of Math. (2) 45, (1944). 386--396.



\bibitem{CCT1}
M.~Christ, J.~Colliander, T.~Tao, {\it Asymptotics, frequency modulation, and low regularity ill-posedness for canonical defocusing equations,} Amer. J. Math. 125 (2003),1235--1293.


\bibitem{CCT2}
M.~Christ, J.~Colliander, T.~Tao, {\it Instability of the Periodic Nonlinear Schr\"odinger Equation,}
arXiv:math/0311227v1 [math.AP].

\bibitem{CKSTT1}
J.~Colliander, M.~Keel, G.~Staffilani, H.~Takaoka, T.~Tao, 
{\it A refined global well-posedness result for Schrödinger equations with derivative,} SIAM J. Math. Anal. 34 (2002), no. 1, 64--86.


\bibitem{CKSTT2}
J.~Colliander, M.~Keel, G.~Staffilani, H.~Takaoka, T.~Tao, 
{\it  Sharp global well-posedness for KdV and modified KdV on $\R$ and $\T$}, J. Amer. Math. Soc. 16 (2003), no. 3, 705--749.





\bibitem{Cru1}
A.B.~Cruzeiro, {\it \'Equations diff\'erentielles ordinaires: non explosion et mesures quasi-invariantes,}
 (French) 
 J. Funct. Anal. 54 (1983), no. 2, 193--205.


\bibitem{Cru2}
A.B.~Cruzeiro, {\it \'Equations diff\'erentielles sur l'espace de Wiener et formules de Cameron-Martin non-lin\'eaires}, 
(French) 
J. Funct. Anal. 54 (1983), no. 2, 206--227. 



\bibitem{DaPrato}
G.~Da Prato, 
{\it An introduction to infinite-dimensional analysis,} 
Revised and extended from the 2001 original by Da Prato. Universitext. Springer-Verlag, Berlin, 2006. x+209 pp.


\bibitem{Deng}
Y.~Deng, {\it Two-dimensional nonlinear Schr\"odinger equation with random radial data,}
 Anal. PDE 5 (2012), no. 5, 913--960.


\bibitem{DengBO}
Y.~Deng, 
{\it Invariance of the Gibbs measure for the Benjamin-Ono equation,}
 J. Eur. Math. Soc. (JEMS) 17 (2015), no. 5, 1107--1198.


\bibitem{DTzV}
Y.~Deng, N.~Tzvetkov, N.~Visciglia,
{\it Invariant measures and long time behaviour for the Benjamin-Ono equation III}, 
 Comm. Math. Phys. 339 (2015), no. 3, 815--857.

\bibitem{SuzzoniNLW}
A.-S.~de Suzzoni, 
{\it Invariant measure for the cubic wave equation on the unit ball of $\R^3$}, 
Dyn. Partial Differ. Equ. 8 (2011), no. 2, 127--147.

\bibitem{SuzzoniBBM}
A.-S.~de Suzzoni,
{\it Wave turbulence for the BBM equation: stability of a Gaussian statistics under the flow of BBM,} Comm. Math. Phys. 326 (2014), no. 3, 773--813.

\bibitem{ET}
M.B.~Erdo\u gan, N.~Tzirakis, {\it Global smoothing for the periodic KdV evolution,} 
Int. Math. Res. Not. IMRN 2013, no. 20, 4589--4614.


\bibitem{FIP}
G.~Fibich, B.~Ilan, G.~Papanicolaou, 
{\it Self-focusing with fourth-order dispersion,} SIAM J. Appl. Math. 62 (2002), no. 4, 1437--1462

\bibitem{GTV}
J.~Ginibre, Y.~Tsutsumi, G.~Velo, 
{\it On the Cauchy problem for the Zakharov system,} J. Funct. Anal. 151 (1997), no. 2, 384--436.



\bibitem{GK}
B.~Gr\'ebert, T.~Kappeler, {\it The defocusing NLS equation and its normal form,} EMS Series of Lectures in Mathematics. European Mathematical Society (EMS), Z\"urich, 2014. x+166 pp.


\bibitem{GROSS} L.~Gross, {\it Abstract Wiener spaces,}
Proc. 5th Berkeley Sym. Math. Stat. Prob. 2 (1965), 31--42.


\bibitem{GKO}
Z.~Guo, S.~Kwon, T.~Oh,
{\it  Poincar\'e-Dulac normal form reduction for unconditional well-posedness of the periodic cubic NLS,} Comm. Math. Phys. 322 (2013), no.1, 19--48.


\bibitem{GO}
Z.~Guo, T.~Oh, {\it 
Non-existence of solutions for the periodic cubic nonlinear Schr\"odinger equation below $L^2$}, 
to appear in  Internat. Math. Res. Not.

\bibitem{HW}
G.H.~Hardy, E.M.~Wright, {\it An introduction to the theory of numbers,} Fifth edition.
	The Clarendon Press, Oxford University Press, New York, 1979. xvi+426 pp. 


\bibitem{IK}
B.A.~Ivanov, A.M.~Kosevich, 
{\it Stable three-dimensional small-amplitude soliton in magnetic materials,}
 So. J. Low Temp. Phys., 9 (1983), pp. 439--442.




%
\bibitem{Karpman}
V.I.~Karpman,
{\it Stabilization of soliton instabilities by higher-order dispersion: fourth order nonlinear Schr\"odinger-type equations}, 
Phys. Rev. E 53 (2) (1996), 1336--1339.
%
%
%
%
\bibitem{KS}
V.I.~Karpman, A.G.~Shagalov, 
{\it Solitons and their stability in high dispersive systems. I. Fourth-order nonlinear Schr\"odinger-type equations with power-law nonlinearities,}
 Phys. Lett. A 228 (1997), no. 1-2, 59--65.
%

\bibitem{KPV93}
C.~Kenig, G.~Ponce, Gustavo, L.~Vega, 
{\it The Cauchy problem for the Korteweg-de Vries equation in Sobolev spaces of negative indices,}
Duke Math. J. 71 (1993), no. 1, 1--21. 


\bibitem{KS}
L.~Koralov, Y.~Sinai, 
{\it Theory of probability and random processes,} Second edition. Universitext. Springer, Berlin, 2007. xii+353 pp.

\bibitem{Kuo}
H.~Kuo, {\it Integration theory on infinite-dimensional manifolds,}
 Trans. Amer. Math. Soc. 159 (1971), 57--78. 

\bibitem{Kuo2}
H.~Kuo, {\it Gaussian measures in Banach spaces,} Lecture Notes in Mathematics, Vol. 463. Springer-Verlag, Berlin-New York, 1975. vi+224 pp.


\bibitem{KO}
S.~Kwon, T.~Oh, 
{\it  On unconditional well-posedness of modified KdV,} Internat. Math. Res. Not. 2012, no. 15, 3509--3534. 


\bibitem{LRS}
J.~Lebowitz, H.~Rose, E.~Speer, 
{\it Statistical mechanics of the nonlinear Schr\"odinger equation,} J. Statist. Phys. 50 (1988), no. 3-4, 657--687.


\bibitem{McKean}
H.P.~McKean, 
{\it Statistical mechanics of nonlinear wave equations. IV. Cubic Schr\"odinger,} 
 Comm. Math. Phys. 168 (1995), no. 3, 479--491. 
 {\it Erratum: Statistical mechanics of nonlinear wave equations. IV. Cubic Schr\"odinger}, Comm. Math. Phys. 173 (1995), no. 3, 675.


\bibitem{MV} 
H.P.~McKean, K.L.~Vaninsky,
{\it Statistical mechanics of nonlinear wave equations. Trends and perspectives in applied mathematics}, 
239--264, Appl. Math. Sci., 100, Springer, New York, 1994. 



\bibitem{MOLI}
L.~Molinet, {\it On ill-posedness for the one-dimensional periodic cubic Schr\"odinger equation,} Math. Res.
Lett. 16 (2009), no. 1, 111--120.


\bibitem{NORS}
A.~Nahmod, T.~Oh, L.~Rey-Bellet, G.~Staffilani,
{\it  Invariant weighted Wiener measures and almost sure global well-posedness for the periodic derivative NLS,} J. Eur. Math. Soc. 14 (2012), 1275--1330. 


\bibitem{NRSS}
A.~Nahmod, L.~Rey-Bellet, S.~Sheffield, G.~Staffilani, 
{\it Absolute continuity of Brownian bridges under certain gauge transformations,} Math. Res. Lett. 18 (2011), no. 5, 
875--887.

\bibitem{Nelson}
E.~Nelson, {\it The free Markoff field,} J. Functional Analysis 12 (1973), 211--227.



\bibitem{OH3} T. Oh, {\it Invariant Gibbs measures and a.s. global well-posedness for coupled KdV systems,}
 Diff. Integ. Eq.  22 (2009), no. 7--8, 637-668. 



\bibitem{OH4} T. Oh,  {\it Invariance of the white noise for KdV}, Comm. Math. Phys.
292 (2009), no. 1,  217--236. 


\bibitem{OHSBO} T. Oh, {\it Invariance of the Gibbs Measure for the Schr\"odinger-Benjamin-Ono System,}
SIAM J. Math. Anal. 41 (2009), no. 6, 2207--2225. 




\bibitem{OQV}
T.~Oh, J.~Quastel, B.~Valk\'o,
{\it  Interpolation of Gibbs measures and white noise for Hamiltonian PDE}, J. Math. Pures Appl. 97 (2012), no. 4, 391--410.


\bibitem{OW}
T.~Oh, Y.~Wang, {\it Global well-posedness of the periodic cubic fourth order NLS in negative Sobolev spaces,}
preprint.

\bibitem{Pausader}
B.~Pausader, 
{\it The cubic fourth-order Schr\"odinger equation,} J. Funct. Anal. 256 (2009), no. 8, 2473--2517. 


\bibitem{QV} J. Quastel, B. Valk\'o, {\it KdV preserves white noise,}
Comm. Math. Phys. 277 (2008), no. 3, 707--714.


\bibitem{RA}
R.~Ramer, {\it On nonlinear transformations of Gaussian measures}, 
J. Functional Analysis 15 (1974), 166--187.



\bibitem{R}
G.~Richards,
{\it Invariance of the Gibbs measure for the periodic quartic gKdV,}
to appear in 
Ann. Inst. H. Poincar\'e Anal. Non Lin\'eaire.



\bibitem{TAO}
T.~Tao, {\it Nonlinear dispersive equations. Local and global analysis,} CBMS Regional Conference Series in Mathematics, 106. Published for the Conference Board of the Mathematical Sciences, Washington, DC; by the American Mathematical Society, Providence, RI, 2006. xvi+373 pp.


\bibitem{TTz}
L.~Thomann, N.~Tzvetkov, 
{\it Gibbs measure for the periodic derivative nonlinear Schr\"odinger equation},
Nonlinearity 23 (2010), no. 11, 2771--2791.


\bibitem{Turitsyn}
S.K.~Turitsyn, {\it Three-dimensional dispersion of nonlinearity and stability of multidimensional
solitons,} Teoret. Mat. Fiz., 64 (1985),  226--232 (in Russian).


\bibitem{TZ1} N. Tzvetkov, {\it Invariant measures for the nonlinear Schr\"odinger equation on the disc,}
Dyn. Partial Differ. Equ. 3 (2006), no. 2, 111--160.

\bibitem{TZ2} N. Tzvetkov, {\it Invariant measures for the defocusing Nonlinear Schr\"odinger equation
(Mesures invariantes pour l'\'equation de Schr\"odinger non lin\'eaire),}
Annales de l'Institut Fourier, 58 (2008), 2543--2604.


\bibitem{Tzv}
N.~Tzvetkov, {\it Construction of a Gibbs measure associated to the periodic Benjamin-Ono equation,} Probab. Theory Related Fields 146 (2010), no. 3--4, 481--514.


\bibitem{TzBBM}
N.~Tzvetkov, {\it Quasi-invariant Gaussian measures for one dimensional Hamiltonian PDE's,}
 Forum Math. Sigma 3 (2015), e28, 35 pp. 


\bibitem{TzV1}
N.~Tzvetkov, N.~Visciglia, 
{\it Invariant measures and long-time behavior for the Benjamin-Ono equation,}
 Int. Math. Res. Not. IMRN 2014, no. 17, 4679--4714. 

\bibitem{TzV2}
N.~Tzvetkov, N.~Visciglia, 
{\it Invariant measures and long time behaviour for the Benjamin-Ono equation II,} J. Math. Pures Appl. (9) 103 (2015), no. 1, 102--141.


\bibitem{Y}
V.~Yudovich, Non-stationary flows of an ideal incompressible fluid, Zh. Vychisl. Math. i Math. Fiz.
(1963) 1032--1066 (1963) (in Russian).


\bibitem{Zhid}
P.~Zhidkov, {\it On an infinite sequence of invariant measures for the cubic nonlinear Schr\"odinger equation,}
 Int. J. Math. Math. Sci. 28 (2001), no. 7, 375--394.


\bibitem{Zhid2}
P.~Zhidkov, {\it Korteweg-de Vries and nonlinear Schr\"odinger equations: qualitative theory,} Lecture Notes in Mathematics, 1756. Springer-Verlag, Berlin, 2001. vi+147 pp.






























 



 



























\end{thebibliography}
\end{document}